\title[Stable maps and branched shadows of 3-manifolds]
{Stable maps and branched shadows of 3-manifolds}
\author[Ishikawa]{Masaharu Ishikawa}
\thanks{The first-named author is 
supported by the Grant-in-Aid for Scientific Research (C),
JSPS KAKENHI Grant Number 25400078}
\address{Tohoku University, Sendai, 980-8578, Japan}
\email{ishikawa@m.tohoku.ac.jp}
\author[Koda]{Yuya Koda}
\thanks{The second-named author is supported by
Japan Society for Promotion of Science (JSPS) Postdoctoral Fellowships for Research Abroad.}
\address{
Mathematical Institute \newline
\indent Tohoku University, Sendai, 980-8578, Japan \newline
\indent and \newline
\indent (Temporary) Dipartimento di Matematica  \newline
\indent Universit\`{a} di Pisa, Largo Bruno Pontecorvo 5, 56127 Pisa, Italy}
\email{koda@math.tohoku.ac.jp}
\theoremstyle{plain}
\newtheorem*{theorem*}{Theorem}
\newtheorem*{lemma*} {Lemma}
\newtheorem*{corollary*} {Corollary}
\newtheorem*{proposition*}{Proposition}
\newtheorem*{conjecture*}{Conjecture}
\newtheorem{theorem}{Theorem}[section]
\newtheorem{lemma}[theorem]{Lemma}
\newtheorem{corollary}[theorem]{Corollary}
\newtheorem{proposition}[theorem]{Proposition}
\newtheorem{remark}[theorem]{Remark}
\newtheorem{claim}{Claim}
\theoremstyle{remark}
\newtheorem*{definition}{Definition}
\newtheorem{example}{Example}
\theoremstyle{definition}
\newtheoremstyle{citing}
  {}
  {}
  {\itshape}
  {}
  {\bfseries}
  {.}
  {.5em}
  {\thmnote{#3}}
\theoremstyle{citing}
\newtheorem*{citingtheorem}{} 
\newcommand{\Integer}{\mathbb{Z}}
\newcommand{\Real}{\mathbb{R}}
\newcommand{\Complex}{\mathbb{C}}
\newcommand{\Int}{\mathrm{Int}}
\newcommand{\bsc}{\mathrm{bsc}}
\newcommand{\smc}{\mathrm{smc}}
\newcommand{\cross}{\mathrm{cr}}
\newcommand{\Nbd}{\mathrm{Nbd}}
\newcommand{\vol}{\mathrm{vol}}
\begin{document}

\maketitle

\begin{abstract}
Turaev's shadow can be seen locally as the Stein factorization of a
stable map. 
In this paper, we define the notion of stable map complexity 
for a compact orientable 3-manifold bounded by (possibly empty) tori 
counting, with some weights, the minimal number of singular fibers of codimension 2
of stable maps into the real plane, 
and prove that this number equals the minimal number
of vertices of its branched shadows.
In consequence, we give a complete characterization of
hyperbolic links in the 3-sphere whose exteriors have stable map complexity 1 
in terms of Dehn surgeries,
and also give an observation concerning the coincidence
of the stable map complexity and shadow complexity
using estimations of hyperbolic volumes. 
\end{abstract}

\vspace{1em}

\begin{small}
\hspace{2em}  \textbf{2010 Mathematics Subject Classification}: 
57R45; 57M27, 57N70, 58K15

%
%
%
%
%
%
%
%
%
%
%
%

\hspace{2em} 
\textbf{Keywords}:
stable map; complexity; branched shadow; hyperbolic volume
\end{small}

\section*{Introduction}


The stable maps play an important role in the study of smooth manifolds. 
They are especially used for obtaining topological information of 
the source manifold from the types of their singularities and singular fibers. 
A typical example is the usage of critical points of a Morse function, 
which is a stable map of a manifold into the real line.
In this paper, we deal with stable maps 
from a closed orientable $3$-manifold $M$ to the real plane $\Real^2$. 
As is well-known, the set $S(f)$ of singular points of a stable map 
$f : M \to \Real^2$ 
consists of 
{\it definite fold} points, {\it indefinite fold} points and {\it cusp} points, see \cite{Lev65, Lev85}. 
In \cite{Lev65} Levine showed that 
the cusp points of each stable map can be eliminated by 
a homotopical deformation, which implies that  
every 3-manifold admits a stable map into $\Real^2$ without
cusp points. 
We note that when $f: M \to \Real^2$ is a stable map without cusp points, 
$S(f)$ forms a link in $M$ and $f|_{S(f)}$ is an immersion with only normal crossings. 
A crossing of $f(S(f))$ is said to be {\it non-simple} if 
only one of the connected components of its preimage contains singularities. 
Burlet-de Rham \cite{BR74} showed that 
the 3-manifolds admitting a stable map with only definite fold points are either the 3-sphere or 
connected sums of $S^2 \times S^1$. 
Saeki \cite{Sae96} generalized this result showing that 
the 3-manifolds admitting a stable map with neither 
non-simple crossings nor cusp points 
are graph manifolds, and vice versa. 
It follows from Saeki's work that we need non-simple crossings to 
construct a stable map of a hyperbolic 3-manifold into the plane.  
Hence it is natural to ask how many non-simple crossings 
a hyperbolic 3-manifold needs to have 
and where is the position of the singular fibers.
Costantino-Thurston \cite{CT08} and Gromov \cite{Gro09} gave, independently, 
a linear lower bound 
of the number of non-simple crossings of stable maps that a closed 3-manifold admits 
in terms of Gromov norm. 

On the other hand, Turaev \cite{Tur92, Tur94} introduced the notion of {\it shadows} of 4 and 3-manifolds 
in his great deal of study on quantum invariants. 
Roughly speaking, a shadow of a compact oriented 4-manifold $W$ 
is a special kind of 2-dimensional polyhedron ({\it almost-special polyhedron}) 
$P$ embedded in $W$ in a locally flat way so that 
$W$ collapses onto $P$. 
Then $P$ is also called a shadow of the 3-manifold $\partial W$. 
Shadows provide a combinatorial presentation of both the 4 and 3-manifolds. 
In fact, by means of shadows, we can reconstruct a 4 and 3-manifold in a unique way 
from an almost-special polyhedron $P$ equipped with a suitable coloring of the regions of $P$ 
by half integers, which is called a {\it gleam}. 
Shadows provide important topological properties for 4 and 3-manifolds 
as well as being an interesting notion for the study of quantum topology. 
We refer the reader to Costantino-Thurston \cite{CT08} for triangulations of 4 and 3-manifolds, 
Costantino \cite{Cos06, Cos08} for Stein structure, Spin$^c$ structure and complex structure of 4-manifolds 
and Martelli \cite{Mar11} for a classification of 4-manifolds 
admitting shadows without vertices. 
See also the survey paper \cite{Cos05b} by Costantino. 
Shadows can be defined also for a link 
in a compact orientable $3$-manifold with $($possibly empty$)$ boundary consisting of tori. 
In general, shadows are defined for a trivalent graph in each compact, orientable 3-manifold, 
however, we consider only in the above case. 

The {\it Stein factorization} of a map is the space of the connected components of its fibers. 
This is a powerful tool for the study of stable maps, because, in this case, the Stein factorization 
has particularly simple local shapes. 
In fact, in each of the papers \cite{BR74}, \cite{Sae96} and \cite{Lev85} mentioned earlier, 
Stein factorization played an important role. 
In \cite{CT08}, Costantino-Thurston revealed a strong relation 
between the Stein factorizations of stable maps of 3-manifolds into the plane and  
the shadows of the manifolds. 
In this paper, we study the detailed relation between {\it branched shadows} and stable maps 
of a 3-manifold (and a link in it). 
Here, a branched shadow is a shadow equipped with an orientation of each of their regions 
in a certain admissible way.  

Let $M$ be a compact, orientable $3$-manifold with $($possibly empty$)$ 
boundary consisting of tori 
and $L$ a $($possibly empty$)$ link in $M$.
We call the minimal number of vertices in any of branched shadows of $(M, L)$ 
the {\it branched shadow complexity} of $(M, L)$, and we denote it by $\bsc(M, L)$. 
We can define another kind of complexity of $(M, L)$ 
using the theory of stable maps. 
Here we note that the definition of stable maps can be naturally generalized  for the above pair $(M, L)$. 
In this paper, we call this map an {\it S-map}. 
See Section \ref{subsec:Stable maps and their Stein factorizations}  for details.  
For an S-map of $(M, L)$ into $\Real^2$,  
the singular fibers over non-simple crossings are divided into two types, 
{\it types $\mathrm{II}^2$ and $\mathrm{II}^3$}, 
 according to their shapes as shown in Figure~\ref{fig:codimension_2_singularities}, 
see Saeki \cite{Sae04}. 
\begin{figure}[htbp]
\begin{center}
\includegraphics[width=6cm,clip]{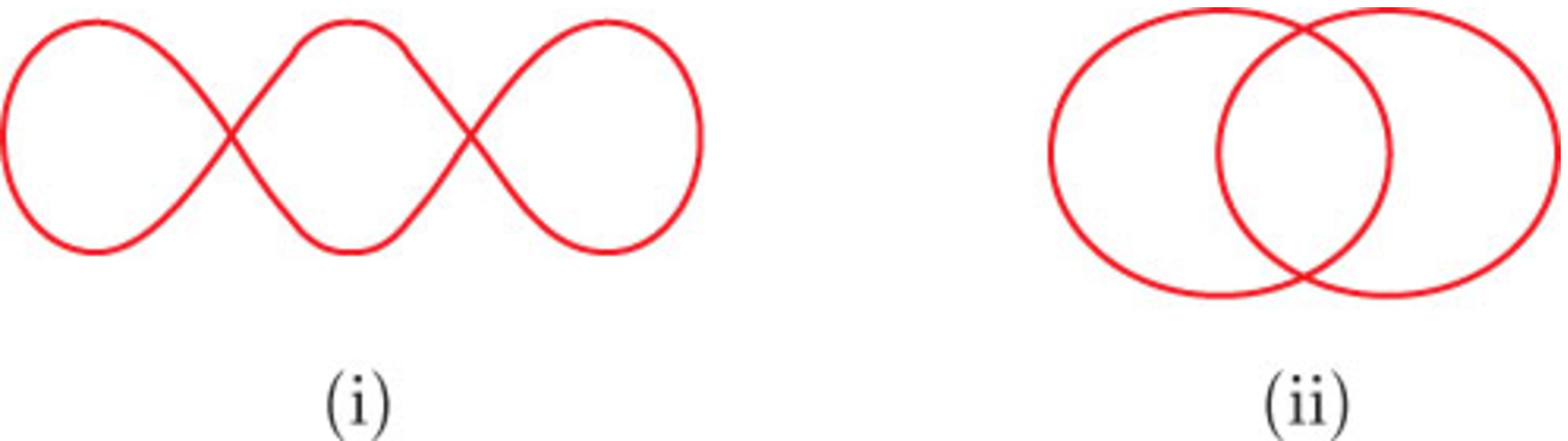}
\caption{(i) A singular fiber of type $\mathrm{II}^2$; (ii) A singular fiber of type $\mathrm{II}^3$.}
\label{fig:codimension_2_singularities}
\end{center}
\end{figure} 
We denote by $\mathrm{II}^2 (f)$ and $\mathrm{II}^3 (f)$ the sets of 
singular fibers of types $\mathrm{II}^2$ and $\mathrm{II}^3$, respectively, 
of an S-map $f:(M,L)\to\Real^2$. 
We call the minimal number of the ``weighted sums" $| \mathrm{II}^2 (f) | + 2 \, |\mathrm{II}^3 (f)| $ for S-maps 
$f: (M, L) \to \Real^2$ 
 the {\it stable map complexity} of $(M, L)$, and we denote it by $\smc(M, L)$. 
The first main result of this paper is the following: 
\begin{citingtheorem}[Theorem \ref{thm:branched shadow complexity and crossing singulatities}]
Let $M$ be a compact, orientable $3$-manifold with $($possibly empty$)$ 
boundary consisting of tori 
and $L$ a $($possibly empty$)$ link in $M$.
Then we have $\bsc (M, L)  = \smc (M, L)$.
\end{citingtheorem}
By Costantino-Thurston \cite{CT08}, 
the inequality $\mathrm{sc} (M, L)  \leqslant \smc(M, L)$ holds, where 
$\mathrm{sc} (M, L)$ is the {\it shadow complexity} of $(M, L)$, 
that is, the minimal number of vertices in any of its shadows. 
Since a shadow constructed from a stable map in their way is branchable, we have 
the inequality $\bsc (M, L)  \leqslant \smc(M, L)$ immediately. 
To get the other inequality, we construct from a given branched shadow 
a stable map in 2 steps: 
In the first step, we construct an S-map on the submanifold of $M$ corresponding 
to a neighborhood of the singularities of the branched shadow so that 
its Stein factorization is isomorphic to the neighborhood. 
In the second step, we extend this map to whole of $M$ without creating 
non-simple crossings. 
This theorem implies branched shadows of $(M, L)$, which is a purely combinatorial object, 
determine  the minimal number of the weighted sums of non-simple crossings 
of stable maps of $(M, L)$, which is a purely differential value. 
It is worth noting that in case where we consider a stable map of a closed orientable 
$3$-manifold $M$ into $\Real$, instead of $\Real^2$, 
the minimal number of critical points is determined 
by some specific branched spines of $M$, see Remark \ref{rmk:branched shadow complexity and crossing singulatities} (3). 

Due to Theorem \ref{thm:branched shadow complexity and crossing singulatities}, 
we can use branched shadows to study the behavior of stable map complexities under several operations of 
3-manifolds. 
In fact, the subadditivities of branched shadow complexities under both connected sum and torus sum 
are obtained by easy combinatorial constructions and these provide immediately 
the same properties for stable map complexities 
(cf. Corollaries \ref{cor:S-map of the connected sum}, \ref{cor:S-map of the torus sum}). 
It follows immediately that stable map complexities do not decrease under Dehn filling 
(cf. Corollary \ref{cor:stable map complexities of a link and its exterior}). 
This behavior is similar to that of the hyperbolic volumes
shown by Thurston \cite{Thu80}. 

In the remaining part of the paper, we discuss more applications of 
Theorem \ref{thm:branched shadow complexity and crossing singulatities}. 
First, we construct a stable map for a link in the 3-sphere using a branched shadow, 
and introduce a way to determine the configuration of its singular fibers. 
This construction and the result for Dehn filling that mentioned earlier yield the following: 
\begin{citingtheorem}[Corollary \ref{cor:stable map of a surgered manifold}]
Let $M$ be a closed orientable $3$-manifold obtained from $S^3$ by surgery along a link $L$. 
Then there exists a stable map $f: M \to \Real^2$ without cusp points such that 
$| \mathrm{II}^2 (f)  | \leqslant \cross (L) - 2$ and $ \mathrm{II}^3 (f)   = \emptyset$, 
where $\cross (L)$ is the crossing number of $L$. 
\end{citingtheorem}
In \cite{KS12}, Kalm\'{a}r-Stipsicz described the upper bounds of 
the minimal numbers of simple crossings, $\mathrm{II}^2 (f) $, $\mathrm{II}^3 (f) $, etc., for 
stable maps that a closed 3-manifold $M$ admits 
in terms of properties of a surgery diagram of $M$. 
The above corollary provides better upper bounds 
for $| \mathrm{II}^2 (f) |$, $| \mathrm{II}^3 (f) |$ than theirs. 

Next, developing the technique in the above construction, we characterize 
the hyperbolic knots in $S^3$ having branched shadow complexity 1 
as follows: 
\begin{citingtheorem}[Theorem  \ref{thm:vertex 1 general}] 
Let $L$ be a hyperbolic link in $S^3$.
Then $\bsc(S^3, L) = 1$ if and only if the exterior of $L$ is diffeomorphic to
a $3$-manifold obtained by Dehn filling the exterior of one of the 
six links $L_1$, $L_2, \ldots, L_6$ in $S^3$ along some of 
$($possibly none of$)$ boundary tori  of its exterior, 
where $L_1$, $L_2, \ldots, L_6$ are 
illustrated in Figure $\ref{fig:vertex_1_general}$.  
\begin{figure}[htbp]
\begin{center}
\includegraphics[width=10cm,clip]{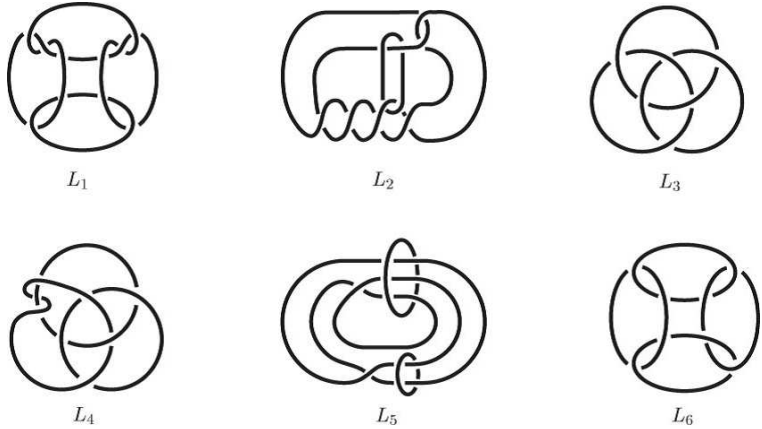}
\caption{The links $L_1, L_2, \ldots, L_6$ in $S^3$.}
\label{fig:vertex_1_general}
\end{center}
\end{figure}
\end{citingtheorem}
Each of the links $L_1$, $L_2, \ldots, L_6$ of this theorem 
is a hyperbolic link having 
the volume $2 V_{\mathrm{oct}}$, 
where $V_{\mathrm{oct}} = 3.66 \dots$ is the volume of the ideal regular octahedron. 
See Costantino-Thurston \cite[Proposition~3.33]{CT08}. 
We note that by Agol-Storm-Thurston \cite[Theorem~9.1]{AST07}, the link $L_1$ 
is a  minimal volume hyperbolic link that contains a meridional incompressible planar surface. 
See also Agol \cite[Example~3.3]{Ago99}. 
In \cite{Yos12} Yoshida proved that the complement of $L_6$ is 
the minimal volume orientable hyperbolic 3-manifold with $4$ cusps.

Theorem \ref{thm:vertex 1 general} can be restated in terms of
singular fibers of stable maps as follows:
\begin{citingtheorem}[Corollary  \ref{cor:Existence of an S-map with smc 1}] 
Let $L$ be a hyperbolic link in $S^3$.
Then there exists a stable map $f : (S^3, L) \to \Real^2$ without cusp points 
such that $| \mathrm{II}^2 (f) | = 1$ and $\mathrm{II}^3 (f) = \emptyset$ 
if and only if 
the exterior of $L$ is diffeomorphic to
a $3$-manifold obtained by Dehn filling the exterior of one of the 
six links $L_1$, $L_2, \ldots, L_6$ in Theorem $\ref{thm:vertex 1 general}$ 
along some of $($possibly none of$)$ boundary tori  of its exterior. 
\end{citingtheorem}
We also describe the configuration of the singular fiber 
of the above S-map for $E(L_i)$, $i=3,4,5,6$ 
in Corollary \ref{cor:configurations of the unique singular fibers}.

Finally, we discuss relation between stable map complexities and hyperbolic volumes. 
Costantino-Thurston \cite{CT08} showed the inequality $\| M\| V_\mathrm{tet} \leqslant 2 \, \text{sc}(M) V_{\mathrm{oct}}$
for any $3$-manifold $M$, where $\| M \|$ is the Gromov norm of $M$
and $V_\mathrm{tet} = 1.01 \ldots$ is the volume of the ideal regular tetrahedron. 
The same argument together with Theorem \ref{thm:branched shadow complexity and crossing singulatities} 
gives the following inequality relating stable map complexities and Gromov norms: 
\[ \| M\| V_{\mathrm{tet}} \leqslant 2 \, \smc (M) V_{\mathrm{oct}}. \]
In particular, if $M$ is hyperbolic then the inequality $\vol (M) \leqslant 2 \, \smc (M) V_{\mathrm{oct}}$ holds. 
The lower bound of $\vol (M)$ by $\smc (M)$ can also be given by using branched, 
{\it special} shadows, where a branched shadow $P$ is special if $P$ is stratified by its singularities 
as a CW-complex. 
We note that every closed orientable 3-manifold admits a branched, special shadow by the moves 
described in \cite{Tur94, Cos05a}. 
By applying Futer-Kalfagianni-Purcell \cite[Theorem~1.1]{FKP08} to our situation, we have
the inequality
\[ 2 \, \smc (M) V_{\mathrm{oct}} \left( 1 - \left( \frac{2\pi}{\mathrm{sl}(P)} \right)^2 \right)^{3/2} \leqslant \vol (M) , \]
where $M$ is a closed orientable hyperbolic $3$-manifold with special shadow $P$, 
and $\mathrm{sl}(P)$ is a certain positive real number determined by the gleam and the topological data of $P$, which 
corresponds to the minimal slope length of Dehn fillings. 
We require $\mathrm{sl}(P)>2\pi$ in the above inequality, 
see Section \ref{sec:Stable maps and hyperbolic volume} for details.
From these inequalities we have the following result that concerns the coincidence of 
shadow complexities, branched shadow complexities and stable map complexities.
\begin{citingtheorem}[Theorem \ref{thm:linear upper and lower bound of sms}]
Let $M$ be a closed orientable $3$-manifold, and let $P$ be a branched, special shadow of $M$.
If $\mathrm{sl}(P) > 2\pi\sqrt{2c(P)}$, then we have
 $\mathrm{sc}(M)=\mathrm{bsc}(M)=\mathrm{smc}(M)=c(P)$.
\end{citingtheorem}
It seems to be hard to compute the shadow complexities, branched shadow complexities and stable map complexities 
by just looking at their definitions, because we need to consider the minimal of infinitely many shadows, branched shadows and 
weighted sums of non-simple crossings, respectively. 
Moreover, it seems to be too optimistic to expect that 
the shadow complexities coincide with the branched shadow complexities, 
since there exist infinitely many shadows that are not branchable. 
However, Theorem \ref{thm:linear upper and lower bound of sms} 
suggests that these three kinds of complexities coincide for many cases. 
\vspace{1em}

This paper is organized as follows. 
In Section \ref{sec:Preliminaries} we review the definitions of branched shadows and stable maps. 
In Section \ref{sec:Branched shadow complexity and stable map complexity}, we provide 
the proof of Theorem \ref{thm:branched shadow complexity and crossing singulatities}, and 
several direct corollaries of it. 
In Section \ref{sec:Stable maps of links}, we introduce a way to construct 
a stable map for each link $L$ in the 3-sphere $S^3$, and 
describe the configuration of the fibers of the map. 
Section \ref{sec:Knots with branched shadow complexity 1} is devoted to a characterization of  
the hyperbolic knots in $S^3$ having the stable map complexity 1 and its applications. 
In Section \ref{sec:Stable maps and hyperbolic volume}, 
we discuss relation between stable map complexities and hyperbolic volumes.

\vspace{1em}

Throughout the paper, we will work in the smooth category unless otherwise mentioned. 

\section{Preliminaries}
\label{sec:Preliminaries}

\subsection{Shadows and branched shadows of 3-manifolds}
\label{subsec:Shadows and branched shadows of 3-manifolds}

A compact topological space $P$ is called 
an {\it almost-special polyhedron} if 
every point of $P$ has a regular neighborhood homeomorphic to 
one of the five local models 
shown in Figure \ref{fig:branched_polyhedron}. 
\begin{figure}[htbp]
\begin{center}
\includegraphics[width=12cm,clip]{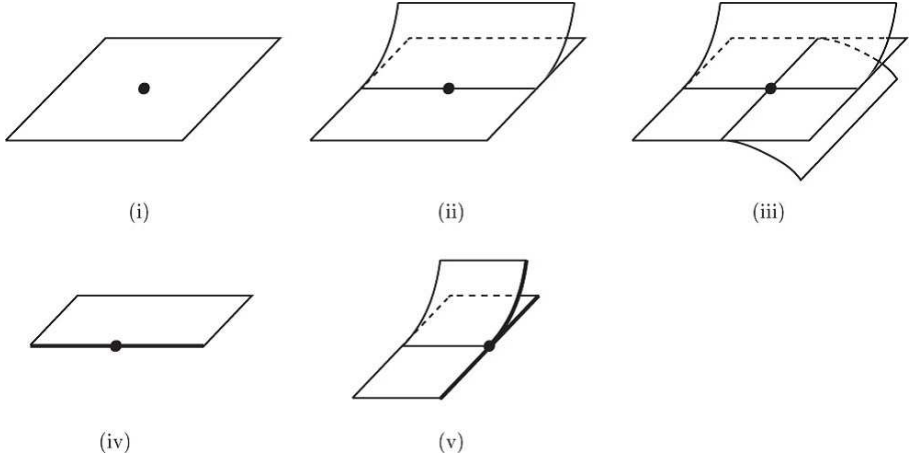}
\caption{The local models of an almost-special polyhedron.}
\label{fig:branched_polyhedron}
\end{center}
\end{figure}
A point whose regular neighborhood is shaped on the model 
(iii) is called a {\it true vertex} of $P$, and we denote the set of true vertices of $P$  by $V(P)$. 
The set of points whose regular neighborhood are shaped on the models (ii), (iii) or (v) is called the {\it singular set} of $P$, 
and we denote it by $S(P)$. 
The set of points whose regular neighborhood are shaped on the models  (iv) or (v) is called the {\it boundary} of $P$ and 
we denote it by $\partial P$. 
A point whose regular neighborhood is shaped on the model 
(v) is called a {\it boundary-vertex} of $P$, and we denote the set of boundary-vertices of $P$  by $\mathit{BV}(P)$. 
Throughout the paper, we set $c(P) = |V(P)| + |\mathit{BV}(P)|$. 
The polyhedron $P$ is said to be {\it closed} if $\partial P = \emptyset$.  
Each component of $P \setminus S(P)$ 
is called a \textit{region}. 
A region is said to be {\it internal} if it does not touch the boundary of $P$. 
Otherwise, it is said to be {\it external}. 

Let $P$ be an almost-special polyhedron. 
A {\it coloring} of $\partial P$  is an arbitrary map 
from the set of components of $\partial P$ to $\{ i, e, f \}$. 
Then with respect to the coloring, $\partial P$ decomposes into three 
peaces $\partial _i P$, $\partial _e P$ and $\partial_f P$. 
An almost-special polyhedron is said to be {\it boundary-decorated} if 
it is equipped with a coloring of $\partial P$. 
If $\partial _f(P) = \emptyset$, $P$ is said to be {\it proper}.

\begin{definition}
Let $M$ be a compact orientable 3-manifold and $L$ a $($possibly empty$)$ link in $M$. 
A boundary-decorated almost-special polyhedron $P$ 
properly embedded in a compact oriented smooth 4-manifold $W$ is called a {\it shadow} of $(M, L)$ if 
\begin{itemize}
\item
$W \setminus P$ is diffeomorphic to $\partial W \times (0, 1]$, or equivalently, 
$W$ collapses onto $P$ after equipping a natural PL structure on $W$; 
\item
$P$ is locally flat, that is, 
each point $p$ of $P$ has a neighborhood $\Nbd(p ; P)$ that lies in a 3-dimensional submanifold of $W$; and 
\item
$(M, L) = ( \partial W \setminus \Nbd (\partial_e P; \partial W),  \partial_i P)$. 
\end{itemize}
When $L = \emptyset$, we say that $P$ is a shadow of $M$ for simplicity. 
\end{definition}
In \cite{Tur92, Tur94}, Turaev proved that any pair 
of a compact orientable 3-manifold with no spherical boundary components
and a $($possibly empty$)$ link in it has a shadow. 
In \cite{Cos05a, CT08}, the {\it shadow complexity} of $(M,L)$, denoted by $\mathrm{sc}(M,L)$, 
was defined to be the minimal number of true and boundary-vertices in any of its shadows.  
\begin{remark}
{\rm
In Costantino-Thurston \cite[Remark~$3.19$]{CT08}, it is proved that, if 
$M$ is a compact orientable 3-manifold with boundary consisting of 
$($possibly empty$)$ tori, then $\mathrm{sc}(M,L)$ 
coincides with the minimal number of true vertices in any of shadows of $(M, L)$. 
Actually, in their paper, $\mathrm{sc} (M, L)$ was defined using only the true vertices. }
\end{remark}

A {\it branching} of an almost-special polyhedron $P$ is an orientation of each
region of $P$ such that 
the orientations on each component of $S(P) \setminus V(P)$ induced by the regions does not coincide. 
A branching of $P$ allows us to smoothen $P$ as in the local models in Figure \ref{fig:branched_polyhedron}. 
We note that even though each region of an almost-special polyhedron $P$ is orientable, 
$P$ does not necessarily admit a branching. 
See Ishii \cite{Ish86}, Benedetti-Petronio \cite{BP97} and Petronio \cite{Pet12} for general properties of branched polyhedra. 

\begin{definition}
Let $M$ be a compact orientable 3-manifold and $L$ a $($possibly empty$)$ link in $M$. 
A shadow $P$ of $(M, L)$ equipped with a branching is called a {\it branched shadow} of $(M, L)$. 
\end{definition}
In \cite[Theorem~3.1.7]{Cos05a} and \cite[Proposition~3.4]{Cos08}, Costantino described 
an algorithmic procedure to obtain a branched shadow from an arbitrary shadow through a finite 
sequence of moves. 
As a consequence, any pair 
of a compact orientable 3-manifold with no spherical boundary components
and a $($possibly empty$)$ link in it has a branched shadow. 
\begin{definition}
\label{def:branched shadow complexity}
Let $M$ be a compact orientable 3-manifold and $L$ a $($possibly empty$)$ link in $M$. 
The {\it branched shadow complexity} of $(M,L)$, denoted by $\bsc(M,L)$, 
is the minimal number of true and boundary-vertices in any of its branched shadows.  
A branched shadow $P$ of $(M,L)$ is said to be {\it minimal} if it satisfies  
$c (P) = \bsc(M,L)$, that is, it contains the least possible number of 
true and boundary-vertices. 
\end{definition}
\begin{remark}
{\rm
At the moment, we do not know whether $\bsc(M,L)$ coincides with 
the minimal number of only true vertices in any of branched shadows of $(M, L)$ 
if $M$ is a compact orientable 3-manifold with boundary consisting of 
$($possibly empty$)$ tori. 
More generally, we do not know whether $\mathrm{sc} (M, L)$ coincides with $\bsc (M, L)$. 
See Theorem $\ref{thm:linear upper and lower bound of sms}$ in the last section of this paper.} 
\end{remark}

A {\it gleam} on an almost-special polyhedron $P$ is a coloring of all the interior regions of $P$ 
with half integers satisfying a certain condition. 
We call an almost-special polyhedron $P$ equipped with gleams a {\it shadowed polyhedron}. 
In \cite{Tur92, Tur94}, Turaev showed the following: 
\begin{enumerate}
\item
If an almost-special polyhedron $P$ is embedded in a compact oriented smooth 4-manifold $W$ so that 
$P$ is locally-flat, 
there exists a canonical coloring of the interior regions of $P$ with half integers, that is, 
we have the canonical gleam on $P$. 
\item 
({\it Turaev's reconstruction}) 
From a shadowed polyhedron $P$, 
we can reconstruct a compact oriented smooth 4-manifold $W$ and an embedding $P \hookrightarrow W$ 
in a unique way (up to diffeomorphism) so that 
$W$ collapses onto $P$ and the canonical gleam on $P$ given by the embedding $P \hookrightarrow W$ 
coincides with the prefixed gleam on $P$. 
\end{enumerate}
Here we briefly explain the reconstruction of an oriented 4-manifold from a shadowed polyhedron $P$ 
only in the case where 
$\partial P = \emptyset$, $S(P) \neq \emptyset$ and $S(P)$ is connected. 
For the detailed and general construction we refer the reader to Costantino \cite{Cos05a} 
(see also Thurston \cite{Thu02} and Costantino-Thurston \cite{CT08}).  
Let $P$ be a shadowed polyhedron without boundary such that 
$S(P)$ is non-empty and connected. 
The polyhedron $\Nbd(S(P); P)$ decomposes into pieces each of which is homeomorphic 
to a compact surface or one of the local models (ii) and (iii) shown in Figure \ref{fig:branched_polyhedron}. 
For each piece homeomorphic to the models (ii) or (iii), we consider its 3-dimensional thickening and then 
glue all these pieces following a natural instruction given by the combinatorial structure of $P$. 
We note that the resulting 3-manifold $X$ is not necessarily orientable. 
Let $W_0$ be the subbundle of the determinant line bundle over $X$ whose fiber is 
$[-1, 1]$ (after giving an Euclidean metric over this bundle). 
This is the unique $[-1, 1]$ bundle over $X$ whose total space is orientable. 
Let $R$ be a component of $P \setminus \Int \, \Nbd(S(P); P)$. 
Let $W_R$ be the subbundle of the determinant line bundle over $R \times [-1,1]$ whose fiber is 
$[-1, 1]$. 
When $R$ is an orientable surface, $W_R$ is nothing else but $R \times [-1, 1] \times [-1, 1]$. 
The combinatorial structure of $P$ again tells us which  
part of $\partial W_R$, which is a solid torus, 
will be glued to which part of $\partial W_0$, which is also a torus, to obtain the required 4-manifold $W$. 

Let $l_1$, $l_2, \ldots, l_k$ be $k$ simple closed curves constituting $\partial \Nbd(S(P); P) \cap R$. 
For each $i \in \{ 1 , 2, \ldots, k \}$, the 3-dimensional thickening of $\Nbd(S(P); P)$ 
provides an annulus or a M\"{o}bius band $B_i$ whose core is $l_i$. 
Similarly, the 3-dimensional thickening $R \times [-1, 1]$ 
provides an annulus $A_i$ whose core is $l_i$. 
Now we have two "framings" $A_i$ and $B_i$ of the same simple closed curve $l_i$, which lie in the 
same solid torus after the earlier-mentioned identification. 
Suppose that $B_i$ is obtained from $A_i$ by twisting $n_i$ times 
($n_i \in (1/2) \Integer$). See Figure \ref{fig:framing_and_gleam}. 
\begin{figure}[htbp]
\begin{center}
\includegraphics[width=6cm,clip]{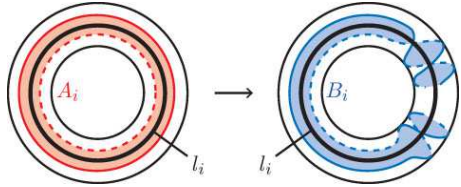}
\caption{The framings $A_i$ and $B_i$.}
\label{fig:framing_and_gleam}
\end{center}
\end{figure} 
The gleam of the region corresponding to $R$ is the sum $\Sigma_{i=1}^k n_i$. 

\subsection{Stable maps and their Stein factorizations}
\label{subsec:Stable maps and their Stein factorizations}

Let $M$ be a closed orientable 3-manifold. 
Let $f$ be a map of $M$ into an orientable 2-manifold $\Sigma$. 
We denote by $S(f)$ the set of singular points of $f$, that is, 
$S(f) = \{ p \in M \mid \mathrm{rank}~ df_p < 2\} $. 
A map $f$ of $M$ into $\Sigma$ is said to be {\it stable} 
if there exists an open neighborhood of $f$ in $C^\infty(M, \Sigma)$
such that for any map $g$ in this neighborhood there exist
diffeomorphisms $\Phi : M \to M$ and $\varphi : \Sigma\to \Sigma$ satisfying
$g=\varphi\circ f\circ \Phi^{-1}$.
Here $C^\infty(M,\Sigma)$ is the set of smooth maps of $M$ into $\Sigma$
with the Whitney $C^\infty$ topology.
If $f$ is stable, there exist local coordinates centered
 at $p$ and $f(p)$ such that 
$f$ is locally described in one of the following way: 
\begin{enumerate}
\item
$(u,x,y) \mapsto (u,x)$; 
\item
$(u,x,y) \mapsto (u,x^2 + y^2)$; 
\item
$(u,x,y) \mapsto (u,x^2 - y^2)$; 
\item
$(u,x,y) \mapsto (u,y^2 + ux -x^3)$. 
\end{enumerate}
(In the cases of (1), (2), (3) and (4), $p$ is called a {\it regular point}, a {\it definite fold point}, 
an {\it indefinite fold point} and a {\it cusp point}, respectively.)	
Further, we require that 
\begin{enumerate}
\setcounter{enumi}{4}
\item
$f^{-1} \circ f(p) \cap S(f) = \{ p \}$ for a cusp point $p$;  
\item
restriction of $f$ to $S(f) \setminus \{\mbox{cusp points}\}$ is an immersion with only normal crossings.
\end{enumerate}
Conversely, if a smooth map satisfies the above conditions, then it is a stable map.
The stable maps form an open dense set in the space $C^\infty (M, \Sigma)$. 

Let $M$ be a compact orientable 3-manifold with $($possibly empty$)$ boundary consisting of tori. 
A smooth map $f$ of $M$ into an orientable 2-manifold $\Sigma$ is called an {\it S-map} if 
\begin{enumerate}
\item 
the restriction of $f$ to $\Int \, M$ is a stable map 
(here a stable map means that, as in the case where $M$ is closed, 
there exists an open neighborhood of $f$ in $C^\infty(\Int \, M, \Sigma)$
such that for any map $g$ in this neighborhood there exist
diffoemorphisms $\Phi:M\to M$ and $\varphi:\Sigma\to\Sigma$
satisfying $g=\varphi \circ f \circ \Phi^{-1}$);
\item
for each $p \in \partial M$ there exist 
a local coordinate $(u,x,y)$ centered at $p$, where 
$\partial M$ corresponds to $\{ y=0 \}$, 
and a local coordinate of $f(p)$ such that 
$f$ is locally described as $(u, x, y) \mapsto (u,x)$.  
\end{enumerate} 
As in Saeki \cite{Sae96}, we denote by $S_0(f)$, $S_1(f)$ and $C(f)$ the sets of definite fold, indefinite fold, cusp 
points, respectively, of the restriction of $f$ to $\Int \, M$. 

Let $f$ be an $S$-map of a compact orientable 3-manifold $M$ with $($possibly empty$)$ boundary consisting of tori into 
an orientable 2-manifold $\Sigma$. 
We say that two points $p_1$ and $p_2$ are {\it equivalent} if they are 
contained in the same component of the fibers of $f$. 
We denote by $W_f$ the quotient space of $M$ with respect to the equivalence relation 
and by $q_f $ the quotient map. 
We define the map $\bar{f} : W_f \to N$ so that $f = \bar{f} \circ q_f$. 
The quotient space $W_f$, or the composition $\bar{f} \circ q_f$ is called the {\it Stein factorization} of $f$. 
The Stein factorization $W_f$ is homeomorphic to a polyhedron, that is, the underlying space of a finite 2-dimensional simplicial complex.

Let $f$ be an $S$-map of a compact orientable 3-manifold $M$ with $($possibly empty$)$ boundary consisting of tori into 
an orientable 2-manifold $\Sigma$. 
By Kushner-Levine-Porto \cite{KLP84} and Levine \cite{Lev85}, the local models of the Stein factorization $W_f$ can be described as follows. 

If $p \in S_0(f)$ ($p\in C(f)$, respectively), 
the projection $q_f(p) \mapsto f(p)$ of the Stein factorization looks locally 
as shown in Figure \ref{fig:definite_fold} (i) (Figure \ref{fig:definite_fold} (ii),
respectively).
\begin{figure}[htbp]
\begin{center}
\includegraphics[width=8cm,clip]{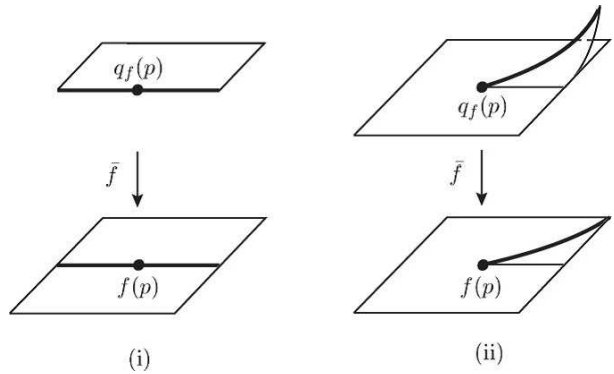}
\caption{The local model of the map $\bar{f} : W_f \to \Sigma$ for: (i) a definite fold point; (ii) a cusp point.}
\label{fig:definite_fold}
\end{center}
\end{figure}
In this case, 
$q_f^{-1} \circ q_f (p)$ 
is called a {\it singular fiber of type $\mathrm{I}^0$}
($\mathrm{II}^a$, respectively). 
See Saeki \cite[Figure~3.4]{Sae04}. 
We remark that, in Saeki's book, these notations are defined  for singular fibers of stable maps of
orientable 4-manifolds into 3-manifolds. However, the classification
of singular fibers of stable maps of orientable 3-manifolds into a
plane coincide with that of singular fibers of codimension 0, 1, 2 of 
stable maps of orientable 4-manifolds into 3-manifolds as mentioned in
\cite[Remark~3.14]{Sae04}. 
For this reason, we consistently use the symbols in \cite[Figure~3.4]{Sae04} 
also for the remaining types of singular fibers described below.

If $p \in S_1(f)$, the projection $q_f(p) \mapsto f(p)$ looks locally as one of the three models 
shown in Figure \ref{fig:indefinite_fold}.  
\begin{figure}[htbp]
\begin{center}
\includegraphics[width=12cm,clip]{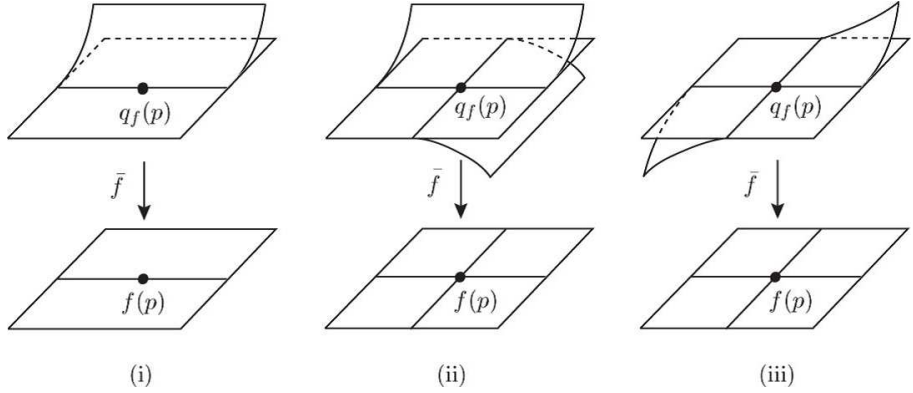}
\caption{The three local models of the map $\bar{f} : W_f \to \Sigma$ for indefinite fold points.}
\label{fig:indefinite_fold}
\end{center}
\end{figure}
Let $U_1, U_2$ and $U_3$ be regular neighborhoods of points $q_f(p)$ of types (i), (ii) and (iii), respectively, shown in the figure. 
Then the preimage $q_f^{-1} (U_1) \subset M$ 
is diffeomorphic to $R \times [0,1]$, where 
$R$ is a pair of pants, and $f$ maps $q_f^{-1} (U_1)$ into $\Sigma$ as depicted in Figure \ref{fig:f-fibration_0}. 
In this case $q_f^{-1} \circ q_f (p)$ is called a 
{\it singular fiber of type $\mathrm{I}^1$}. 
\begin{figure}[htbp]
\begin{center}
\includegraphics[width=13.5cm,clip]{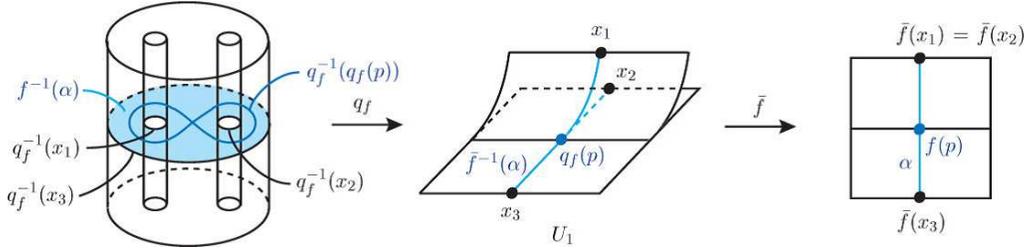}
\caption{The Stein factorization of the restriction of $f$ to $f^{-1}(U_1)$. 
In this figure, $\alpha$ is a transverse arc for $f$ at $f(p)$.  The singular fiber depicted in the figure is of Type $\mathrm{I}^1$.}
\label{fig:f-fibration_0}
\end{center}
\end{figure}
Also, the preimage $f^{-1} (U_2)$ ($f^{-1} (U_3)$, respectively)  
is diffeomorphic to $S \times [0,1]$, where 
$S$ is a disk with three holes, and $f$ maps $f^{-1} (U_2)$ ($f^{-1} (U_3)$, respectively) as drawn in 
Figure \ref{fig:the stein factorization for II3} (Figure \ref{fig:f-fibration_2}, respectively). 
\begin{figure}[htbp]
\begin{center}
\includegraphics[width=14.5cm,clip]{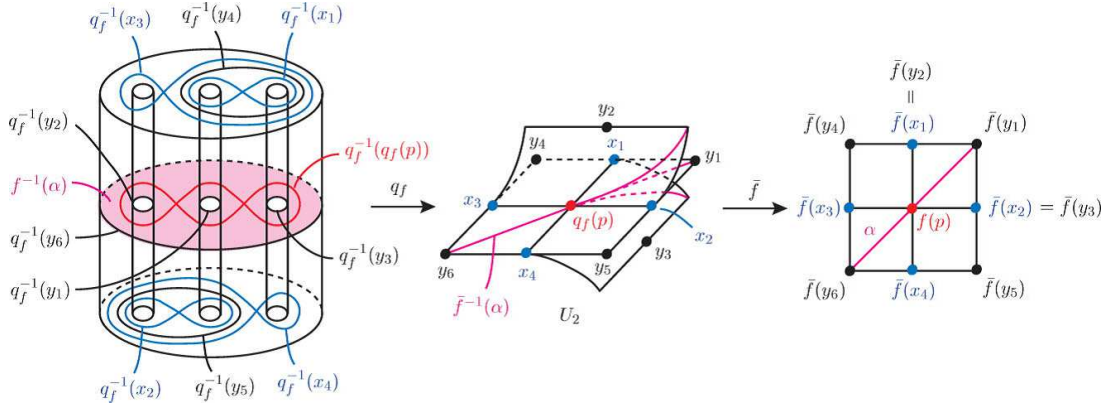}
\caption{The Stein factorization of the restriction of $f$ to $f^{-1}(U_2)$. In this figure, $\alpha$ is a transverse arc for $f$ at $f(p)$.  The singular fiber depicted in the figure is of Type $\mathrm{II}^2$.}
\label{fig:the stein factorization for II3}
\end{center}
\end{figure}
\begin{figure}[htbp]
\begin{center}
\includegraphics[width=14.5cm,clip]{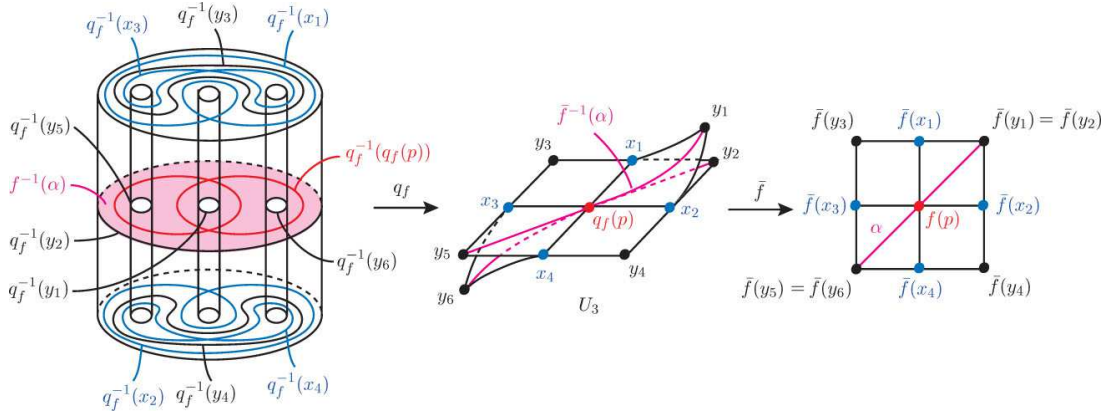}
\caption{The Stein factorization of the restriction of $f$ to $f^{-1}(U_3)$. 
In this figure, $\alpha$ is a transverse arc for $f$ at $f(p)$. The singular fiber depicted in the figure is of Type $\mathrm{II}^3$.}
\label{fig:f-fibration_2}
\end{center}
\end{figure}
In this case $q_f^{-1} \circ q_f (p)$ is called a 
{\it singular fiber of type $\mathrm{II}^2$} ($\mathrm{II}^3$, respectively). 
We call the image in $\Sigma$ of a singular fiber of type $\mathrm{II}^2$ or $\mathrm{II}^3$ 
a {\it non-simple crossing} of $f$. 
We denote by $\mathrm{II}^2 (f)$ and $\mathrm{II}^3 (f)$ the sets of 
singular fibers of types $\mathrm{II}^2$ and $\mathrm{II}^3$, respectively, of $f$. 



\begin{definition}
\label{def:stable map of a link}
Let $M$ be a compact, orientable $3$-manifold with $($possibly empty$)$ 
boundary consisting of tori 
and $L$ a $($possibly empty$)$ link in $M$.
Let $f: M \to \Sigma$ be an S-map of $M$ into an orientable 2-manifold $\Sigma$. 
We say that $f$ is an {\it S-map of} $(M,L)$ (or simply {\it of} $L$) if $S_0(f) \supset L$. 
An S-map $f$ of $(M, L)$ is said to be {\it proper} if $S_0(f) = L$. 
When $M$ is a closed 3-manifold, we call $f$ a stable map of $(M, L)$. 
\end{definition}

In the following, we give several examples that will be used later. 
All of them appeared essentially in Saeki \cite{Sae96}. 
\begin{example}
\label{ex:stable maps and S-maps}
We identify $S^2$ with $\hat{\Complex} = \Complex \cup \{ \infty \}$. 
$D_\alpha = \{ z \in \Complex \mid  |z + 1/2| \leqslant 1/4 \}$, 
$D_\beta = \{ z \in \Complex \mid  |z - 1/2| \leqslant 1/4 \}$, 
$D_\gamma = \{ z \in \Complex \mid  |z| \geqslant 1 \}$, 
$\alpha = \partial D_\alpha$, 
$\beta = \partial D_\beta$, 
$\gamma = \partial D_\gamma$. 

Let $h : S^2 \to \Real$ be a height function  
such that 
\begin{itemize}
\item
$h(z) = h(-z)$ for all $z \in \Complex$; 
\item
$h(\pm 1/2) = -1$ and $h( \infty ) = 1$; 
\item
$h(\alpha) = h(\beta) = -1/2$ and $h(\gamma) = 1/2$; and 
\item
$0 \in P$ is the unique critical point of index 1. 
\end{itemize}
For each integer $n$, let $\rho_n : S^2 \times S^1 \to [-1, 1] \times S^1$ be the map defined by 
\[ \rho_n (z , \theta) = (h(z \exp (\sqrt{-1} (-n \theta / 2))), \theta)  , \] 
where we identify $S^1$ with $ \Integer / 2 \pi \Integer $. 
Let $\psi : [-1,1] \times S^1 \to \Real^2$ be an immersion. 
\begin{enumerate}
\item
\label{ex:S2 times S1}
The map $f_n : S^2 \times S^1 \to \Real^2$ defined by 
$f_n(p) = \psi \circ \rho_n(p)$ is  a stable map without cusp points. 
For this stable map, the set $S_1(f_n)$ of indefinite fold points consists of a single circle, while 
the set $S_0(f_n)$ of definite fold points consists of three (two, respectively) circles when $n$ is an even (odd, respectively) integer. 
\item
\label{ex:R times S1}
Let $f_n : S^2 \times S^1 \to \Real^2$ be the stable map defined in (\ref{ex:S2 times S1}). 
For each integer $n$, we set 
\[
V_n = ( S^2 \times S^1)  \setminus ( \Int \, ( D_\gamma \times S^1 )  \cup 
\Int \, \{ ( z \exp (\sqrt{-1} (n \theta / 2)) , \theta ) \mid z \in D_\alpha \cup D_\beta \}) . 
\]
We note that $V_n$ can be identified with $(S^2 \times S^1) \setminus \Int \,  \Nbd (S_0(f_n))$. 
In particular, when $n$ is an even integer, $V_n$ is diffeomorphic to $R \times S^1$, where $R$ is a pair of pants. 
The restriction of $\rho_n$ to $V_n$ is a map from $V_n$ to $[-1/2, 1/2] \times S^1$. 
Then $f_n|_{V_n} : V_n \to \Real^2$ is an S-map without cusp or definite fold points while 
the set $S_1 (f_n|_{V_n})$ of indefinite fold points consists of a single circle. 
This map will be used in the proof of Theorem 
\ref{thm:branched shadow complexity and crossing singulatities}. 
\item
\label{ex:A times S1}
Set $A = \hat{\Complex} \setminus \Int \, ( D_{\alpha} \cup D_{\beta} )$ and 
$\rho_A = \rho_0 | _{A \times S^1} $. 
The map $ \psi \circ \rho_A : A \times S^1 \to \Real^2$ is an S-map without cusp points. 
Each of $S_0(\psi \circ \rho_A)$ and $S_1(\psi \circ \rho_A)$ consists of a single circle. 
This map will also be used in the proof of Theorem 
\ref{thm:branched shadow complexity and crossing singulatities}. 
\item
\label{ex:S3}
Set $D = \hat{\Complex} \setminus \Int \, D_{\gamma}$. 
The restriction of $\rho_n$ to $D \times S^1$ is a map from $D \times S^1$ to $[-1, 1/2] \times S^1$. 
We identify $S^3$ with 
$(D \times S^1) \cup_\varphi (D^2 \times S^1)$, where 
$\varphi :\partial D^2 \times S^1 \to \partial D \times S^1 $ is defined by 
$\varphi (\theta , \tau) = (\tau , \theta)$.  
Define the map $g_n : S^3 \to  \Real^2$ by 
\begin{eqnarray*}
g_n (p)  = \left\{ 
\begin{array}{ll}
\psi_1 \circ \rho_n (p)  & \mbox{ for } p \in D \times S^1 \\
x  & \mbox{ for } p = (x, \theta)  \in D^2 \times S^1 \\
\end{array}
\right. ,
\end{eqnarray*}
where $\psi_1 : [-1 , 1/2] \times S^1 \hookrightarrow \Real^2 = \Complex$ is defined by 
$\psi_1 (r, \theta) = (3/2 - r) \exp (\sqrt{-1} \theta)$ and 
$D^2$ is identified with the unit disk on $\Real^2$ centered at (0,0). 
This is a stable map without cusp points. 
The set $S_0(g_n)$ of definite fold points is the $(2, n)$-torus link, in other words, 
$g_n$ is a proper stable map of the $(2, n)$-torus link. 
The Stein factorization $W_{g_n}$ of this map is shown in Figure \ref{fig:torus_link}. 
\begin{figure}[htbp]
\begin{center}
\includegraphics[width=8cm,clip]{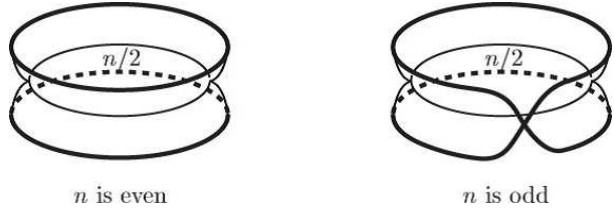}
\caption{The Stein Factorization $W_{g_n}$.}
\label{fig:torus_link}
\end{center}
\end{figure}
We note that the almost-special polyhedron $W_{g_n}$ equipped with 
the gleam $n/2$ on its disk region is a (branched) shadow of 
the $(2, n)$-torus link. 
See Costantino-Thurston \cite[Example~3.16]{CT08}. 
\end{enumerate}
 \end{example}

\begin{definition}
\label{def:stable map complexity}
Let $M$ be a compact, orientable $3$-manifold with $($possibly empty$)$ 
boundary consisting of tori 
and $L$ a $($possibly empty$)$ link in $M$.
Let $f: (M,L) \to \Real^2$ be an S-map. 
The {\it complexity} of $f$, denoted by $c(f)$, is defined by 
$c(f) = | \mathrm{II}^2 (f) |
+ 2 \, | \mathrm{II}^3 (f) | $. 
The {\it stable map complexity of $(M,L)$} (or simply {\it of $L$}), denoted by 
$\smc (M, L)$, is defined by $\smc(M, L) = \min_f \{  c(f)  \}$, 
where $f$ runs over all S-maps of $(M, L)$ into $\Real^2$ without cusp points.
\end{definition}

\begin{example}
\label{ex:stable map complexity of (2,n)-torus liinks}
In Example \ref{ex:stable maps and S-maps} (\ref{ex:S3}), we have seen that the 
stable map complexity of a $(2,n)$-torus link is zero. 
\end{example}

\section{Branched shadow complexity and stable map complexity}
\label{sec:Branched shadow complexity and stable map complexity}

\begin{lemma}
\label{lem:stable maps to R2 and S2}
Let $M$ be a compact orientable $3$-manifold with $($possibly empty$)$ boundary consisting of tori. 
Let $f: M \to S^2$ be an S-map. 
Then there exists an S-map $g: M \to \Real^2$ satisfying 
$ c(g) = | \mathrm{II}^2 (f)  | + 2 \, | \mathrm{II}^3 (f) |$ and $S_0(f) \subset S_0(g)$. 
\end{lemma}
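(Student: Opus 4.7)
The plan is to obtain $g$ by composing $f$ with a folding map $h : S^2 \to \Real^2$ and then perturbing $h \circ f$ near its rank-drop locus to restore stability. Choose a regular value $p \in S^2$ of $f$ with $p \notin f(S(f))$, and a small closed disk $D \subset S^2$ around $p$ with $D \cap f(S(f)) = \emptyset$; then $f^{-1}(D)$ is a disjoint union of solid tori on each of which $f$ restricts to a projection. Let $\gamma = \partial D$, and define $h : S^2 \to \Real^2$ so that both $h|_{S^2 \setminus \mathrm{int}\, D}$ and $h|_D$ are diffeomorphisms, of opposite orientations, onto a common closed disk $D' \subset \Real^2$, with $h$ taking the local form $(\tau, \theta) \mapsto (R - \tau^2, \theta)$ in a collar of $\gamma$. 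Since $h$ is a local diffeomorphism off $\gamma$ and $S(f) \cap f^{-1}(\gamma) = \emptyset$, the composition $h \circ f$ preserves the singular set $S(f)$ together with the types of its singular fibers; however, it fails to be stable because $f^{-1}(\gamma)$, a disjoint union of tori, is a $2$-dimensional rank-drop locus.

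To restore stability, perturb $h \circ f$ in a small tubular neighborhood $U$ of $f^{-1}(\gamma)$. In local coordinates $(s, z, \tau)$ on a component of $U$, with $\tau = 0$ on $f^{-1}(\gamma)$, the map $h \circ f$ has the form $(s, z, \tau) \mapsto (R - \tau^2, z)$; set
\begin{equation*}
g(s, z, \tau) = \bigl(R - \tau^2 + \epsilon\, \eta_1(s, z, \tau),\ z + \epsilon\, \eta_2(s, z, \tau)\bigr),
\end{equation*}
for generic bump functions $\eta_1, \eta_2$ supported in $U$ and sufficiently small $\epsilon > 0$. A direct Jacobian computation shows that the new singular locus inside $U$ is the $1$-dimensional curve $\{\tau = O(\epsilon),\ \partial_s \eta_1 = O(\epsilon)\}$, generically a disjoint union of circles on $f^{-1}(\gamma)$ along which $g$ has only fold singularities. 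Outside $U$, $g$ coincides with $h \circ f$ and is already stable, so $g$ is an S-map of $M$ into $\Real^2$.

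Finally, because the perturbation is supported away from $S(f)$, each singular fiber of $f$ reappears in $g$ with the same type, giving $S_0(f) \subset S_0(g)$, $\mathrm{II}^2 (f) \subset \mathrm{II}^2 (g)$, and $\mathrm{II}^3 (f) \subset \mathrm{II}^3 (g)$. The image of the new fold curves lies close to $h(\gamma) = \partial D'$, whereas $h(f(S(f)))$ is a compact subset of $\mathrm{int}\, D'$, so these two parts of $g(S(g))$ are disjoint for small $\epsilon$. A generic choice of $\eta_1, \eta_2$ ensures that the new fold image intersects itself only at simple transverse crossings, introducing no singular fibers of type $\mathrm{II}^2$ or $\mathrm{II}^3$. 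Hence $|\mathrm{II}^2 (g)| = |\mathrm{II}^2 (f)|$ and $|\mathrm{II}^3 (g)| = |\mathrm{II}^3 (f)|$, giving $c(g) = |\mathrm{II}^2 (f)| + 2\,|\mathrm{II}^3 (f)|$. The main technical step, controlling the new singular locus inside $U$, rests on the Jacobian analysis above combined with a standard genericity argument, and the consistency of the perturbation across all components of $f^{-1}(\gamma)$ follows from patching the local models with a partition of unity.
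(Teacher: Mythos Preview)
Your approach—compose $f$ with a fold $h:S^2\to\Real^2$ along the boundary of a small regular disk and then repair stability near $f^{-1}(\gamma)$—is essentially the same idea as the paper's, which replaces $f$ on $f^{-1}(D_1\setminus\Int D_0)$ by an explicit model map $\rho_A$ (Example~\ref{ex:stable maps and S-maps}\,(\ref{ex:A times S1})). The difference is that the paper uses a \emph{specific} model while you invoke a \emph{generic} perturbation, and this is where your argument has a genuine gap.

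The problematic step is the sentence ``A generic choice of $\eta_1,\eta_2$ ensures that the new fold image intersects itself only at simple transverse crossings, introducing no singular fibers of type $\mathrm{II}^2$ or $\mathrm{II}^3$.'' Here you are conflating two distinct notions: a \emph{transverse} (normal) crossing of the fold image, which genericity does give you, and a \emph{simple} crossing in the paper's sense, which means the two fold points lie in \emph{different} connected components of the fiber $g^{-1}(q)$. The latter is a global topological condition on the fiber and is \emph{not} guaranteed by genericity. Concretely, your Hessian computation shows that along the new singular curve the type alternates between definite fold (where $\partial_s^2\eta_1<0$) and indefinite fold (where $\partial_s^2\eta_1>0$), with cusps where $\partial_s^2\eta_1=0$; for a generic $\eta_1$ depending on $(s,z,\tau)$ the indefinite fold circles can wind nontrivially on the torus $\Gamma_i$, and when their images self-intersect both fold points can lie in the same component of $g^{-1}(q)\cap U_i$, producing a genuine $\mathrm{II}^2$ fiber that a further small perturbation cannot remove.

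The fix is to abandon genericity and choose the perturbation explicitly. Taking $\eta_2=0$ and $\eta_1(s,z,\tau)=\chi(\tau)\phi(s)$ with $\chi$ a bump function and $\phi$ a standard height function on the fiber circle $S^1$, your Jacobian analysis gives exactly one definite fold circle and one indefinite fold circle on each $\Gamma_i$, their images are disjoint round circles near $\partial D'$, and there are no cusps and no crossings at all. This is precisely the Stein factorization of the paper's model map $\rho_A$, so with this choice your argument goes through and coincides with the paper's proof.
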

\begin{proof}
The assertion follows from the construction of a stable map in Saeki \cite[Lemma~3.6]{Sae96}. 
For completeness, we recall the argument.

Let $x \in S^2$ be a regular value of $f$. 
Choose two small disks $D_0$ and $D_1$ in $S^2$ so that 
$x \in D_0 \subset \Int \, D_1$ and $f(S(f))\cap D_1=\emptyset$. 

Let $A$ and $\rho_A$ be as in Example \ref{ex:stable maps and S-maps} (\ref{ex:A times S1}). 
Let $N_1, N_2,\cdots,N_n$ be the connected components
of $ f^{-1} ( D_1 \setminus \Int \, D_0 ) $. 
For each $1 \leqslant i \leqslant n$, there exist 
diffeomorphisms $t : N_i \to A \times S^1$ and 
$u: [-1/2 , 1] \times S^1 \to D_1 \setminus \Int \, D_0$ 
such that 
$u \circ \rho_A \circ t = f|_{N_i}$. 

Now we define the required map $g:M\to S^2\setminus\{x\}\cong\Real^2$ by
\begin{eqnarray*}
g (p) = \left\{ 
\begin{array}{ll}
f (p) & \mbox{ for } p \in f^{-1}(S^2 \setminus \Int \, D_1) \\
u \circ \rho_A \circ t (p) & \mbox{ for } p \in N_i, 1 \leqslant i \leqslant n \\
u_0 \circ f (p) & \mbox{ for } p \in f^{-1}(D_0), \\
\end{array}
\right. .
\end{eqnarray*}
where 
$u_0:D_0\to S^2\setminus\{x\}$ is a smooth embedding of 
$D_0$ such that 
the restrictions of $u_0\circ f$ and $u\circ\rho_A\circ t$
to $f^{-1}(D_0)\cap N_i$ coincide. 
Since $\rho_A$ has
no singular fibers of types $\mathrm{II}^2$ or 
$\mathrm{II}^3$, 
$g$ satisfies the requirement of the lemma.
\end{proof}

\begin{theorem}
\label{thm:branched shadow complexity and crossing singulatities}
Let $M$ be a compact, orientable $3$-manifold with $($possibly empty$)$ 
boundary consisting of tori and $L$ a $($possibly empty$)$ link in $M$.
Then we have $\bsc(M, L)  = \smc(M, L)$.
\end{theorem}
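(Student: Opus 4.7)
The plan is to establish the two inequalities $\bsc(M, L) \leqslant \smc(M, L)$ and $\smc(M, L) \leqslant \bsc(M, L)$ separately, following the outline sketched in the introduction. The forward inequality is essentially a refinement of the Costantino--Thurston construction, while the reverse inequality requires building a stable map from a branched shadow in two stages.

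For $\bsc(M, L) \leqslant \smc(M, L)$, I would start with an S-map $f : (M, L) \to \Real^2$ without cusp points realizing $\smc(M, L)$, so that $c(f) = | \mathrm{II}^2(f)| + 2\, |\mathrm{II}^3(f)|$ is minimal. Applying the Costantino--Thurston construction, the Stein factorization $W_f$ embeds into an oriented $4$-manifold $W$ that collapses onto it, giving a shadow of $(M, L)$. From the local models (Figures \ref{fig:indefinite_fold}--\ref{fig:f-fibration_2}) one sees that true vertices of $W_f$ correspond bijectively to singular fibers of types $\mathrm{II}^2$ and $\mathrm{II}^3$, with a type $\mathrm{II}^3$ fiber contributing two true vertices (compare Figure \ref{fig:f-fibration_2}); hence the count matches $c(f)$. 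To exhibit a branching, I would use the coorientation of each region of $W_f$ induced by the submersion $f$ away from $S(f)$, combined with the orientations of $M$ and $\Real^2$. The normal forms of indefinite fold points force the three regions meeting along a component of $S(W_f) \setminus V(W_f)$ to induce pairwise non-coincident orientations on that edge, which is exactly the branching condition.

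For $\smc(M, L) \leqslant \bsc(M, L)$, I would start with a minimal branched shadow $P$ of $(M, L)$ and reconstruct an S-map by reversing the Stein factorization. In the first step, on a regular neighborhood $N$ of $V(P) \cup \mathit{BV}(P)$ in $P$, glue together the local pieces of the Stein factorizations from Figures \ref{fig:indefinite_fold}--\ref{fig:f-fibration_2} compatibly with the combinatorics of $P$; this produces a $3$-manifold $M_N$ with an S-map to a disk whose Stein factorization equals $N$. Here the branching supplies a consistent choice of orientation on every indefinite fold arc at each vertex, which is what makes the local gluing unambiguous. In the second step, extend the map over the complement of $N$ region by region: for each internal region $R$ use the model map on $R \times S^1$ of Example \ref{ex:stable maps and S-maps}(\ref{ex:R times S1}) (whose $S^1$-direction comes from the $I$-bundle structure of $P$ in $W$), and for each external region with boundary on $\partial_i P \cup \partial_e P$ use the model of Example \ref{ex:stable maps and S-maps}(\ref{ex:A times S1}). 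Both models are free of type $\mathrm{II}^2$ and $\mathrm{II}^3$ singular fibers, so no new non-simple crossings are introduced. Finally, I would invoke Turaev's reconstruction theorem and the uniqueness of the gleam to identify the $4$-manifold thus produced with the one whose shadow is $P$, so that the resulting $3$-manifold is indeed $(M, L)$, and the stable map satisfies $c(f) \leqslant c(P) = \bsc(M, L)$.

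The main obstacle is the second step of the reverse direction: the locally built S-maps near vertices must be extended coherently over the regions in such a way that the Stein factorization globally recovers $P$ and the gleams agree. The branching is precisely the data that allows one to pick compatible orientations of the indefinite fold arcs and of the $S^1$-fiber directions in the thickenings, avoiding the orientation obstructions that would otherwise force spurious non-simple crossings when regions are glued around $\partial \Nbd(V(P) \cup \mathit{BV}(P); P)$. A secondary subtlety is treating the boundary-vertices in $\mathit{BV}(P)$, but these are handled analogously using the external region model $A \times S^1$, noting that the $\partial_i P$ and $\partial_e P$ colorings dictate whether the corresponding boundary tori contain the link $L$ or the boundary $\partial M$.
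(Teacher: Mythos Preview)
Your outline has the right two-sided structure, but there are genuine gaps in both directions.

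For $\bsc \leqslant \smc$: the Stein factorization $W_f$ is \emph{not} an almost-special polyhedron near a type $\mathrm{II}^3$ singular fiber. The local model there is a cone on the complete graph $K_4$ (Figure~\ref{fig:indefinite_fold}(iii)), which is not one of the five allowed local pictures in Figure~\ref{fig:branched_polyhedron}; in particular it does not consist of two true vertices. The paper handles this by excising a neighborhood of each $q_f(\mathrm{II}^3)$ point from $W_f$ to get an honest almost-special polyhedron $Q$ (with $|\mathrm{II}^2(f)|$ true vertices), and then capping each resulting graph boundary by a small branched polyhedron $Q_i$ carrying exactly two true vertices (Figure~\ref{fig:branching_of_shadow}). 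That is where the weight $2$ in front of $|\mathrm{II}^3(f)|$ actually comes from, and one must also check that the branching on $Q$ extends over each $Q_i$.

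For $\smc \leqslant \bsc$: the real difficulty is not the orientation bookkeeping but the extension of the S-map over the \emph{regions}. After one builds $g$ on $N=\pi^{-1}(\Nbd(\partial P\cup S(P);P))$ (not just on a neighborhood of the vertices), the image $\bar g(\partial R_i)$ of each region boundary is an immersed curve in $S^2$ with normal crossings, and there is no off-the-shelf model S-map on $R_i\times S^1$ matching that boundary data without producing new $\mathrm{II}^2$ or $\mathrm{II}^3$ fibers; Examples~\ref{ex:stable maps and S-maps}(\ref{ex:R times S1}) and (\ref{ex:A times S1}) only cover a pair of pants and an annulus with embedded image. The paper's construction (assuming first that each $R_i$ is a disk) attaches bands $b_j$ across innermost double points to cut $R_i$ into subdisks whose boundary images have at most one double point, fills the one-double-point pieces using the twisted piece $V_1$ of Example~\ref{ex:stable maps and S-maps}(\ref{ex:R times S1}), and then uses Saeki's map $h_2:T^2\times[0,1]\to\Real^2$ to exchange meridian and longitude so that the final solid torus can be capped by a definite-fold model. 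Non-disk regions are reduced to the disk case by cutting along essential curves, capping off, and later re-gluing via the annulus model while adjusting gleams. None of this machinery appears in your Step~2, and invoking Turaev's reconstruction at the end does not substitute for it: the map is built directly on $M$ via the collapsing $\pi:M\to P$, not by constructing a new $4$-manifold and identifying it afterwards.
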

\begin{proof}
We fix an orientation of $\Real^2$. 

We first prove the inequality  
$\bsc(M, L)  \leqslant \smc(M, L)$. 
The argument essentially bases on the construction in the proof of 
Costantino-Thurston \cite[Theorem~4.2]{CT08}. 
Let $f: E(L) \to \Real^2$ be an S-map without cusp points. 
Let $M \overset{q_f}{\longrightarrow} W_f  \overset{\bar{f}}{\longrightarrow} \Real^2$ be the 
Stein factorization of $f$. 
Then $Q = W_f \setminus \Int \, \Nbd ( q_f( \mathrm{II}^3 (f) ) ; W_f)$ is an almost-special polyhedron. 
We note that the number of true vertices of $Q$ equals $| \mathrm{II}^2 (f) |$. 
Put $N = {q_f}^{-1} (Q)$. 
We note that 
$N \overset{q_{f|_{N}}}{\longrightarrow} Q  \overset{\bar{f}|_{Q}}{\longrightarrow} \Real^2$
is the Stein factorization of the map $\bar{f}|_Q \circ q_{f|_N} : N \to \Real^2$. 
By \cite{CT08}, $Q$ is a shadow of $(N , L)$. 
We may equip an orientation on each region of $Q$ in such a way that 
$\bar{f}|_{Q}$ is locally an orientation-preserving diffeomorphism 
at each interior point of the region. 
Apparently, these orientations of the regions of $Q$ give a branching $b_0$ of $Q$. 
We denote by $H_1, H_2, \ldots, H_n$ the components of $\partial Q \cap \partial \Nbd ( q_f ( \mathrm{II}^3 (f)  ) ; W_f)$, 
each of which is a trivalent graph with 4 vertices. 
Let $N_1, N_2, \ldots , N_{|\mathrm{II}^3 (f) |}$  be the components of 
${q_f}^{-1} \Nbd ( q_f( \mathrm{II}^3 (f) ) ; W_f)$, each of which is a genus 3 handlebody containing a singular fiber of type 
$\mathrm{II}^3$. 
By \cite{CT08}, the shadowed polyhedron $Q_i$ shown 
on the left-hand side in Figure \ref{fig:branching_of_shadow} 
is a shadow of each $N_i$ and each component 
$H_i$ of $\partial Q$ 
can be capped off by $Q_i$ so that the resulting polyhedron $P$ 
is a shadow of $(M, L)$. 
\begin{figure}[htbp]
\begin{center}
\includegraphics[width=8cm,clip]{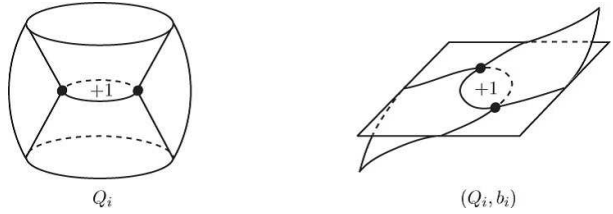}
\caption{A branching of $Q_i$.}
\label{fig:branching_of_shadow}
\end{center}
\end{figure}
We may equip a branching $b_i$ of $Q_i$ in such a way that 
the orientations on each edge of $H_i$ induced by 
$b_0$ and $b_i$ do not coincide, as shown 
on the right-hand side in Figure \ref{fig:branching_of_shadow}. 
Then the branchings $b_0$ and $b_{i} $ give a branching of $P$. 
It is straightforward from the above construction that the number of the true vertices of $P$ is 
$|\mathrm{II}^2 (f)|  + 2 \, |\mathrm{II}^3 (f) |$ while $P$ has no boundary-vertices, so we are done.

In the following we will show the other inequality:  
$\bsc (M, L)  \geqslant \smc(M, L) $. 
Let $P \subset W$ be a minimal branched shadow of $(M, L)$. 
By Lemma \ref{lem:stable maps to R2 and S2}, it suffices to show that 
there exists an S-map $f: E(L) \to S^2$ satisfying 
$| \mathrm{II}^2 (f) | + 2 \, | \mathrm{II}^3  (f)  | = \bsc (M, L)$. 
Suppose that $P$ contains non-empty boundary-vertices. 
Let $G_1, G_2, \ldots, G_n$ be the components of $\partial_f P$ containing at least one boundary vertex. 
For each $1 \leqslant i \leqslant n$, let $F_i$ be a compact orientable surface such that 
there exists an embedding $\iota_i: G_i \to \Int \, F_i$ and 
$F_i$ collapses onto  $\iota_i(G_i)$. 
Fix an orientation of $F_i$ in an arbitrary way. 
We attach $P$ to each $F_i$ by the map $\iota_i$ and we denote the resulting polyhedron by $P^*$. 
The branching of $P$ and the orientations of $F_1$, $F_2 , \ldots, F_n$  give a branching of $P^*$. 
See Figure \ref{fig:wall}. 
\begin{figure}[htbp]
\begin{center}
\includegraphics[width=14cm,clip]{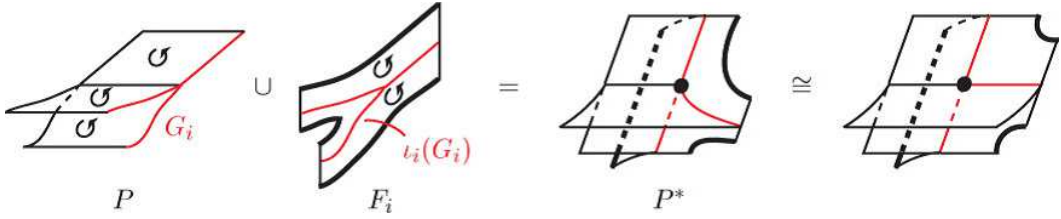}
\caption{Elimination of boundary-vertices.}
\label{fig:wall}
\end{center}
\end{figure}
We note that by this operation, each boundary-vertex of $P$ gives rise to a true vertex of $P^*$ and 
no other true or boundary-vertices are produced. 
Thus $P^*$ has no boundary-vertices and we have 
$|V(P^*)|  = c(P) = \bsc (M, L)$. 
Therefore we may assume without loss of generality that 
$P$ has no boundary-vertices. 

We choose a collapsing $W \searrow P$ so that the induced projection  $\pi : M \to P$ satisfies the following: 
\begin{itemize}
\item
for each $x \in \partial P$, $\pi^{-1}(p)$ is a single point; 
\item
for each $x \in V(P)$, $\pi^{-1} (x) \cong T_4$, where $T_4$ is the suspension of four points; 
\item
for each $x \in S(P) \setminus V(P)$, $\pi^{-1} (x) \cong T_3$, where $T_3$ is the suspension of three points; and 
\item
the restriction of $\pi$ to the preimage $\pi^{-1} (P \setminus ( \partial P \cup  S(P)))$ is a trivial, smooth $S^1$-bundle. 
\end{itemize}
Note that for the construction of the projection $\pi$, we do not need a branching of $P$. 
For a point $x$ in $S(P) \setminus V(P)$, the preimage of $\Nbd(x)$ under $\pi$ is diffeomorphic to 
$R \times [0,1]$, where $R$ is a pair of pants, and $\pi$ is described as 
on the left-hand side in Figure \ref{fig:pi-fibration_0}. 
\begin{figure}[htbp]	
\begin{center}
\includegraphics[width=13.5cm,clip]{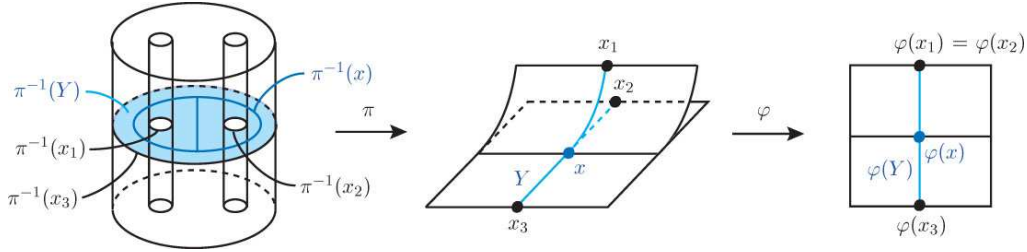}
\caption{The local model of $\pi$ and $\varphi$ for $\Nbd (x; P)$, where $x \in S(P) \setminus V(P)$.}
\label{fig:pi-fibration_0}
\end{center}
\end{figure}
Also, for a point $x$ in $V(P)$, the preimage of $\Nbd(x)$ under $\pi$ is diffeomorphic to 
$S \times [0,1]$, where $S$ is a disk with 3 holes, and $\pi$ is described as on the left-hand side in Figure \ref{fig:pi-fibration_1}. 
\begin{figure}[htbp]
\begin{center}
\includegraphics[width=14.5cm,clip]{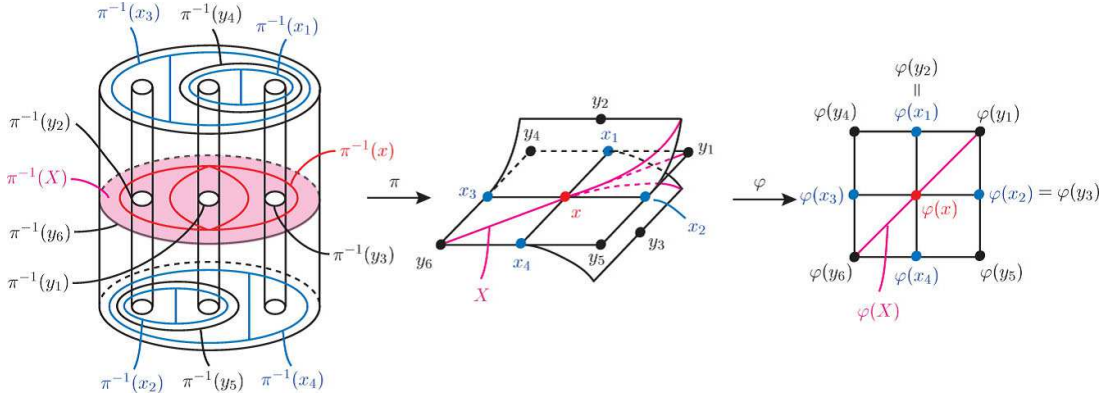}
\caption{The local model of $\pi$ and $\varphi$ for $\Nbd (x; P)$, where $x \in V(P)$.}
\label{fig:pi-fibration_1}
\end{center}
\end{figure}

Let $v_1, v_2, \ldots , v_{n_0}$ be the true vertices of $P$. 
We put 
\[P' = \overline{ P \setminus \left( \Nbd( \partial P ; P) \cup \left( \bigcup_{i=1}^{n_0} \Nbd (v_i; P) \right) \right) }.\]  
The set of components of $S(P')$ consists of 
intervals $e_1, e_2, \ldots, e_{m_1}$ and circles 
$l_1, l_2, \ldots, l_{n_1}$. 
We note that $\Nbd (e_i; P')$ is homeomorphic to $Y \times [0, 1]$,  and 
$\Nbd (l_j; P')$ is homeomorphic to $Y \times S^1$ or $Y \times [0, 2 \pi] / (z , 2 \pi ) \sim (\rho (z) , 0) $, 
where $Y$ is a Y-shaped graph that we identify with 
$Y = \{ r \exp(\sqrt{-1} \theta) \in \Complex \mid  0 \leqslant r \leqslant 1, \theta = 0 \mbox{ or } \pm 2 \pi / 3 \}$ 
and $\rho : Y \to Y$ is defined by the complex conjugation. 
With this homeomorphisms, we equip each of $\Nbd (e_i; P')$ and $\Nbd (l_i; P')$ with a $Y$-bundle structure. 
In the following, we put the three boundary points of $Y$ as $a = \exp((\sqrt{-1}) 2/3 \pi)$, $b = \exp((\sqrt{-1}) (- 2/3 \pi))$ and 
$c = 1$. 
The set of components of 
\[
\overline{ P' \setminus \left( \left( \bigcup_{i=1}^{m_1} \Nbd (e_i; P')\right) \cup \left( \bigcup_{i=1}^{n_1} \Nbd (l_i; P') 	\right) \right) } 
=  P \setminus \Int \, \Nbd (\partial P \cup S(P); P) \] 
consists of oriented surfaces $R_1, R_2, \ldots, R_{n_2}$. 

We set $N = \pi^{-1} (\Nbd (\partial P \cup S(P);  P))$. 
Since $\Nbd (\partial P \cup S(P);  P)$ is a branched polyhedron, there exists a map 
$\varphi : \Nbd (\partial P \cup S(P);  P) \to S^2$ satisfying the following 
(see the right-hand side in Figures \ref{fig:pi-fibration_0} and \ref{fig:pi-fibration_1}): 
\begin{itemize}
\item
$\varphi$ is generic on $S(P)$, that is, 
$\varphi$ is an injection on $S(P)$  
except at finitely many double points, which are the projections of only two points of $S(P) \setminus V(P)$ and 
at the crossing points the two threads project to two different directions on $S^2$; 
\item
For $1 \leqslant i < j \leqslant n_0$, $\varphi (\partial \Nbd (v_i)) \cong S^1$ and 
$\varphi  (\Nbd (v_i)) \cap \varphi (\Nbd (v_j)) = \emptyset$; 
\item
$\varphi$ is locally an orientation-preserving diffeomorphism at each point of $\Nbd (\partial P \cup S(P);  P) \setminus S(P)$; 
\item
$\varphi (\Nbd (S(P);  P))$ is a regular neighborhood of $\varphi (S(P))$ in $S^2$; 
\item
for each $x \in e_i$ ($x \in l_i$, respectively), 
the fiber $Y_x$ of the 
$Y$-bundle structure of $\Nbd(e_i; P')$ ($\Nbd(l_i; P')$, respectively)
is mapped into a segment by $re^{i\theta}\mapsto r\cos\theta$.
\end{itemize}

\begin{claim}
\label{claim:S-map}
There exists an S-map $g : N \to S^2$ with the Stein factorization 
$N \overset{q_g}{\longrightarrow} W_g  \overset{\bar{g}}{\longrightarrow} S^2$ 
such that 
\begin{itemize}
\item
$g(N) = \varphi \circ \pi (N)$; 
\item
$W_g \cong \Nbd ( \partial P \cup S(P); P)$; and 
\item
the following diagram commutes: 
\[\xymatrix{
\partial N \ar[d]_{\pi} \ar[r]^{q_g} \ar@{}[dr]|\circlearrowleft & W_g \ar[d]^{\bar{g}} \\
\Nbd ( \partial P \cup S(P); P) \ar[r]^{\hspace{3.5em} \varphi} & S^2 \\
}  \]
\end{itemize}
\end{claim}
\begin{proof}[Proof of Claim $\ref{claim:S-map}$]

It is clear from the local model of 
the definite fold points (recall Figure \ref{fig:definite_fold} (i)) and the definition of $\varphi$ that 
$\pi^{-1} (\Nbd ( \partial P ; P))$ consists of finitely many solid tori and 
there exists an S-map $g_{\partial P} : \pi^{-1} (\Nbd ( \partial P ; P)) \to S^2$ 
with the Stein factorization 
$\pi^{-1} (\Nbd ( \partial P ; P)) \overset{q_{g_{\partial P}}}{\longrightarrow} W_{g_{\partial P}}  \overset{\bar{g}_{\partial P}}{\longrightarrow} S^2$ 
such that 
\begin{itemize}
\item
$g_{\partial P} (\pi^{-1} (\Nbd ( \partial P ; P))) = \varphi (\Nbd ( \partial P ; P))$; 
\item
$W_{g_{\partial P}} \cong \Nbd ( \partial P ; P)$; and 
\item
$\bar{g}_{\partial P} \circ q_{g_{\partial P}} = \varphi \circ \pi$ on $\partial \pi^{-1} (\Nbd ( \partial P ; P))$.  
\end{itemize}

For each $1 \leqslant i \leqslant n_1$, we identify 
$\Nbd(l_i; P)$ with $Y \times S^1$ or 
$Y \times [0, 2 \pi] / (z , 2 \pi ) \sim ( \rho (z) , 0 )$ as mentioned earlier. 
Here we identify them so that 
the points $(a , 0) \in Y \times S^1$ and $(b, 0) \in Y \times S^1$ lie on the 
regions of $P$ that induce the same orientation on $l_i$.  
The map $\varphi |_{\Nbd(l_i; P)} : \Nbd(l_i; P) \to S^2$ factors through 
$\Nbd(l_i; P) \overset{\varphi_1}{\longrightarrow} [-1/2, 1/2] \times S^1 \overset{\varphi_2}{\longrightarrow} S^2$, 
where $\varphi_1$ is defined by 
$\varphi_1 (r \exp(\sqrt{-1} (\pm 2 \pi / 3 ))) = -r/2$ and $\varphi_1 (r) = r/2$ for 
$0 \leqslant r \leqslant 1$. 
For $n \in \Integer$, let $V_n$ and $\rho_n$ be as defined in Example \ref{ex:stable maps and S-maps} (\ref{ex:R times S1}). 
Then by construction, we may identify $\pi^{-1} (Y \times S^1)$ with $V_0$ or $V_1$, 
depending on whether 
$\Nbd(l_i; P)$ is $Y \times S^1$ or 
$Y \times [0, 2 \pi] / (\rho(z), 2 \pi) \sim (z, 0)$, so that 
$\varphi_2 \circ \rho_n$ coincides with $\varphi \circ \pi |_{V_n}$ on $\partial V_n$.  
We denote the map $\varphi_2 \circ \rho_n : \pi^{-1}(\Nbd(l_i; P)) \to S^2$ by $g_{l_i}$. 

For each $1 \leqslant j \leqslant m_1$, we identify 
$\Nbd(e_j; P')$ with $Y \times [0, 1]$. 
As in the above argument, we may assume that 
the points $(a , 0) \in Y \times S^1$ and $(b, 0) \in Y \times S^1$ lie on the 
regions of $P$ that induce the same orientation on $e_j$.  
The map $\varphi |_{\Nbd(e_j; P')} : \Nbd(e_j; P') \to S^2$ factors through 
$\Nbd(e_j; P') \overset{\varphi_1}{\longrightarrow} [-1/2, 1/2] \times [0,1] \overset{\varphi_2}{\longrightarrow} S^2$, 
where $\varphi_1$ is defined by 
$\varphi_1 (r \exp(\sqrt{-1} (\pm 2 \pi / 3 ))) = -r/2$ and $\varphi_1 (r) = r/2$ for 
$0 \leqslant r \leqslant 1$. 
Let $R = \hat{\Complex} \setminus \Int  \, (D_\alpha \cup D_\beta \cup D_\gamma)$  
and $\rho_0 : R \times S^1 \to [-1/2, 1/2]$ be as in Example \ref{ex:stable maps and S-maps} (\ref{ex:R times S1}). 
By construction, we may identify $\pi^{-1} (Y \times [0,1])$ with $R \times [0,1]$ so that 
$\varphi_2 \circ \rho_0 |_{R \times [0,1]}$ coincides with $\varphi \circ \pi |_{R \times [0,1]}$ on $\partial R \times [0,1]$.  
We denote the map $\varphi_2 \circ \rho_0 |_{R \times [0,1]} : \pi^{-1}(\Nbd(e_j; P')) \to S^2$ by $g_{e_j}$. 


Finally, for each $1 \leqslant k \leqslant n_0$, we identify $\pi^{-1} (\Nbd(v_k; P))$ with $S \times [0,1]$ 
as shown in Figure \ref{fig:pi-fibration_1}, where we recall that $S$ is a disk with 3 holes. 
Let $g_{v_k} : S \times [0,1] \to S^2$ be the local model 
of an S-map in a small neighborhood of a singular fiber of type 
$\mathrm{II}^2$
shown in Figure \ref{fig:the stein factorization for II3} with the Stein factorization 
$S \times [0,1] \overset{q_{g_{v_k}}}{\longrightarrow} W_{g_{v_k}}  \overset{\bar{g}_{v_k}}{\longrightarrow} S^2$ 
such that 
\begin{itemize}
\item
$g_{v_k} |_{\partial S \times [0,1]} = \varphi \circ \pi |_{\partial S \times [0,1]}$; 
\item
$g_{v_k} (S \times \{t\}) = \varphi \circ \pi  (S \times \{t\}))$; and 
\item
the Stein factorization $W_{g_{v_k}}$ is isomorphic to $\Nbd(v_k; P)$ as branched polyhedron, 
where we equip an orientation of each region of $W_{g_{v_k}}$ in such a way that 
map $\bar{g}_{v_k}$ is locally an orientation-preserving diffeomorphism on the region. 
\end{itemize}
Since $P$ is equipped with a branching, a small $C^{\infty}$-perturbation allows us to assume that, 
on each component of $\pi^{-1} (\Nbd (v_k; P) \cap \Nbd (e_j; P') )$, 
that is a pair of pants, $g_{v_k}$ and $g_{e_j}$ coincide. 

Now, we define a map $g : N \to S^2$ by 
\begin{eqnarray*}
g (p)  = \left\{ 
\begin{array}{ll}
g_{\partial P (p)} & \mbox{ for } p \in \pi^{-1} ( \Nbd(\partial P; P) ) ; \\
g_{l_i} (p) & \mbox{ for } p \in \pi^{-1} (\Nbd(l_i; P')), 1 \leqslant i \leqslant n_1 ; \\
g_{e_j} (p) & \mbox{ for } p \in \pi^{-1} (\Nbd(e_j; P')), 1 \leqslant j \leqslant m_1 ; \\
g_{v_k} (p) & \mbox{ for } p \in \pi^{-1} (\Nbd(v_k; P)), 1 \leqslant k \leqslant n_0 . \\
\end{array}
\right. 
\end{eqnarray*}
This completes the proof of the claim. 
\end{proof}

By the above claim, it suffices to show that, for each $1 \leqslant i \leqslant n_2$, 
there exists an S-map $g_{R_i} : \pi^{-1} (R_i) \to S^2$ such that 
\begin{itemize}
\item
$g_{R_i} |_{\partial (\pi^{-1} (R_i))} = g |_{\partial (\pi^{-1} (R_i))}$; and 
\item
$g_{R_i}$ admits no singular fibers of types $\mathrm{II}^2$ or $\mathrm{II}^3$.
\end{itemize}


First we prove the assertion in the case where each surface $R_i$ is a disk 
for each $i \in \{ 1, 2, \ldots, n_2 \}$.
We fix the index $i$ and prove the existence of such an S-map of $\pi^{-1}(R_i)$. 
Hereafter we identify $\pi^{-1}(R_i)$ with $R_i\times S^1$.
By small perturbation of the map $g$, we may assume that $\bar g(\partial R_i)$ has only normal double points.
Choose a smooth arc in $\bar g(\partial R_i)$ which starts and ends at
the same point, i.e., a double point of $\bar g(\partial R_i)$, and has no other double points.
Such a smooth arc can be found by choosing a suitable double point.
Choose a band $b_1=[0,1]\times [0,1]$ in $R_i$ with $\Nbd (S(P);P)\cap b_1=\{0,1\}\times [0,1]$
and a map $\bar g_{b_1} : \Nbd(S(P);P)\cup b_1\to S^2$ such that
$\bar g_{b_1}|_{\Nbd(S(P);P)}=\bar g$ and the image $\bar g_{b_1}(b_1)$ of $b_1$ is as shown in Figure \ref{fig:band}. 
\begin{figure}[htbp]	
\begin{center}
\includegraphics[width=8cm,clip]{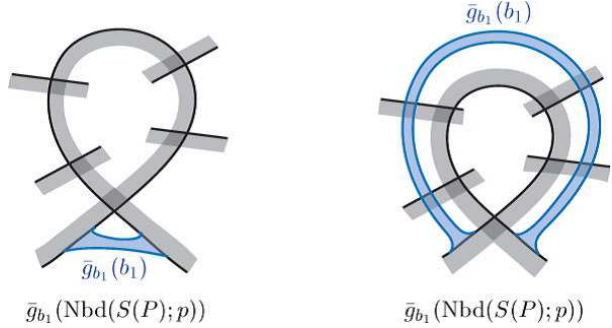}
\caption{Extension of $\bar g$ over a band $b_1$.}
\label{fig:band}
\end{center}
\end{figure}
This map induces an S-map $g_{b_1}:N\cup(b_1\times S^1)\to S^2$ satisfying
\begin{itemize}
\item $g_{b_1}|_{N}=g$; and
\item the Stein factorization of $g_{b_1}$ is given as $g_{b_1} = \bar g_{b_1} \circ q_{g_{b_1}}$,
where $q_{g_{b_1}} |_{b_1\times S^1} :b_1 \times S^1\to b_1$ is the first projection.
\end{itemize}
Note that $b_1 \times S^1 \subset R_i \times S^1$ and $b_1\times S^1$ is attached to $N$ according to the identification
of $\pi^{-1}(R_i)$ with $R_i\times S^1$.
Since $R_i$ is a disk, the closure of $R_i \setminus b_1$ consists of two disjoint disks, say $R_i'$ and $R_i''$,
such that 
\begin{itemize}
\item $\bar g_{b_1}(\partial R'_i)$ has just one double point; and
\item the number of double points of $\bar g_{b_1}(\partial R''_i)$ is less than
that of $\bar g(\partial R_i)$. 
\end{itemize}
Next, we choose the surface $R_i''$ and apply the same argument, i.e., attach a band $b_2$ and make an S-map 
$g_{b_2}:N\cup(b_1\times S^1)\cup(b_2\times S^1)\to S^2$.
We continue this process successively until 
$\bar g_{b_j}(c)$ has at most one double point for each boundary component $c$ of the closure of $R_i \setminus (b_1 \cup \cdots \cup b_j)$. 
Since the number of double point of $\bar g(\partial R_i)$ is finite 
this process ends at a finite number of steps, say $n_3$ steps.
As a consequence, we obtain an S-map $g_B:N\cup(B\times S^1)\to S^2$, where $B=\cup_{j=1}^{n_3} b_j$,
such that
\begin{itemize}
\item $g_{B}|_{N}=g$; 
\item the Stein factorization of $g_B$ is given as $g_{B}=\bar g_{B}\circ q_{g_{B}}$,
where $\bar g_B=\bar g_{b_{n_3}}$ and $q_{g_{B}} |_{B \times S^1} : B \times S^1\to B$ is the first projection; and
\item for each boundary component $c$ of the closure of $R_i\setminus B$,
$\bar g_B|_{c}$ is an immersion of a circle into $S^2$ such that it has at most one double point.
\end{itemize}

Suppose that $c_0$ is a boundary component of the closure of $R_i\setminus B$ 
such that $\bar g_B(c_0)$ has no double point and let $D_{c_0}$ denote the disk in $R_i$ bounded by $c_0$.
In this case, the S-map $g_{c_0}:N\cup(B\times S^1)\cup (D_{c_0}\times S^1)\to S^2$
is easily obtained by choosing the image $\bar g_{c_0}(D_{c_0})$ to be the disk bounded by $\bar g_B(c_0)$,
where $N\cup(B\times S^1)\cup (D_{c_0}\times S^1)\subset M$ due to the identification of $\pi^{-1}(R_i)$ with $R_i\times S^1$.

Suppose that $c_1$ is a boundary component of the closure of $R_i\setminus B$ 
such that $\bar g_B(c_1)$ has exactly one double point.
Let $V_1$ and $f_1|_{V_1}:V_1\to \mathbb R^2$ be as in Example \ref{ex:stable maps and S-maps} (\ref{ex:R times S1}). 
We regard $V_1$ as the exterior of an open tubular neighborhood of a $(2,1)$-cable knot in the solid torus $D^2\times S^1$ 
and set $T=\partial (D^2\times S^1)$ and $T_1 = \partial V_1 \setminus T$. 
Let $\phi:V_1\to V_1$ be an orientation-preserving diffeomorphism such that the positions of two boundary tori exchange,
and set $f_\phi=f_1|_{V_1}\circ \phi$. 
The image $f_\phi(V_1)$ is an annulus in $\mathbb R^2$.
If the preimage of a point on the boundary of $f_\phi(V_1)$ is on $T$
then it is $\{a_1, a_2\}\times S^1$, where $a_1, a_2$ are distinct two points on $\partial D^2$.
If the preimage is on $T_1$, 
then it is a longitude of the $(2,1)$-cable knot. 

Consider the Stein factorization of $f_\phi$, i.e., 
$V_1 \overset{q_{f_\phi}}{\longrightarrow} W_{f_\phi}  \overset{\bar{f}_\phi}{\longrightarrow} S^2$.
Attach $W_{f_\phi}$ to $\text{Nbd}(S(P);P)\cup B$ smoothly so that
$(\text{Nbd}(S(P);P)\cup B) \cap W_{f_\phi}=q_{f_\phi}(T)=c_1$,
and then attach $V_1$ to $N\cup(B\times S^1)$ so that $q_{f_\phi}|_T=g_B|_T$.
Here the solid torus $D^2\times S^1$ including $V_1$ is identified with $\pi^{-1}(D_{c_1})\subset \pi^{-1}(R_i)$,
where $D_{c_1}$ is the disk in $R_i$ bounded by $c_1$.
We need to modify the map $f_\phi$ to $f'_\phi$, with keeping $q_{f_\phi}=q_{f'_\phi}$, so that 
$\bar f'_{\phi}|_{q_{f_{\phi}}(T)}=\bar g_B|_{c_1}$. In other words,
modify it in such a way that the S-map $g_{c_1}:N\cup(B\times S^1)\cup V_1\to S^2$ defined by
\[
g_{c_1}=
\begin{cases}
g_B(p)& \text{ for } p\in N\cup(B\times S^1) \\
f'_\phi(p) & \text{ for } p\in V_1
\end{cases}
\]
is well-defined. 
The existence of such a modification of $f_\phi$ can be checked as follows: 
We set $N_{c_1}=\Nbd(c_1;\Nbd(S(P);P)\cup B)$, which is an annulus.
The image $\bar g_B(N_{c_1})$ is an immersed annulus such that 
the image of one of the immersed boundary components is $\bar g_B(c_1)$. 
There are two regions in $S^2\setminus \Int \, \bar g_B(N_{c_1})$ which have just one corner, 
and one of these regions is bounded by $\bar g_B(c_1)$.
Taking the image of $f_\phi$ in this region, 
we see that $f_\phi$ can be modified so that it coincides with
$\bar g_B|_{c_1}$ on $q_{f_{\phi}}(T)$. See Figure \ref{fig:modification}. 
\begin{figure}[htbp]	
\begin{center}
\includegraphics[width=8.5cm,clip]{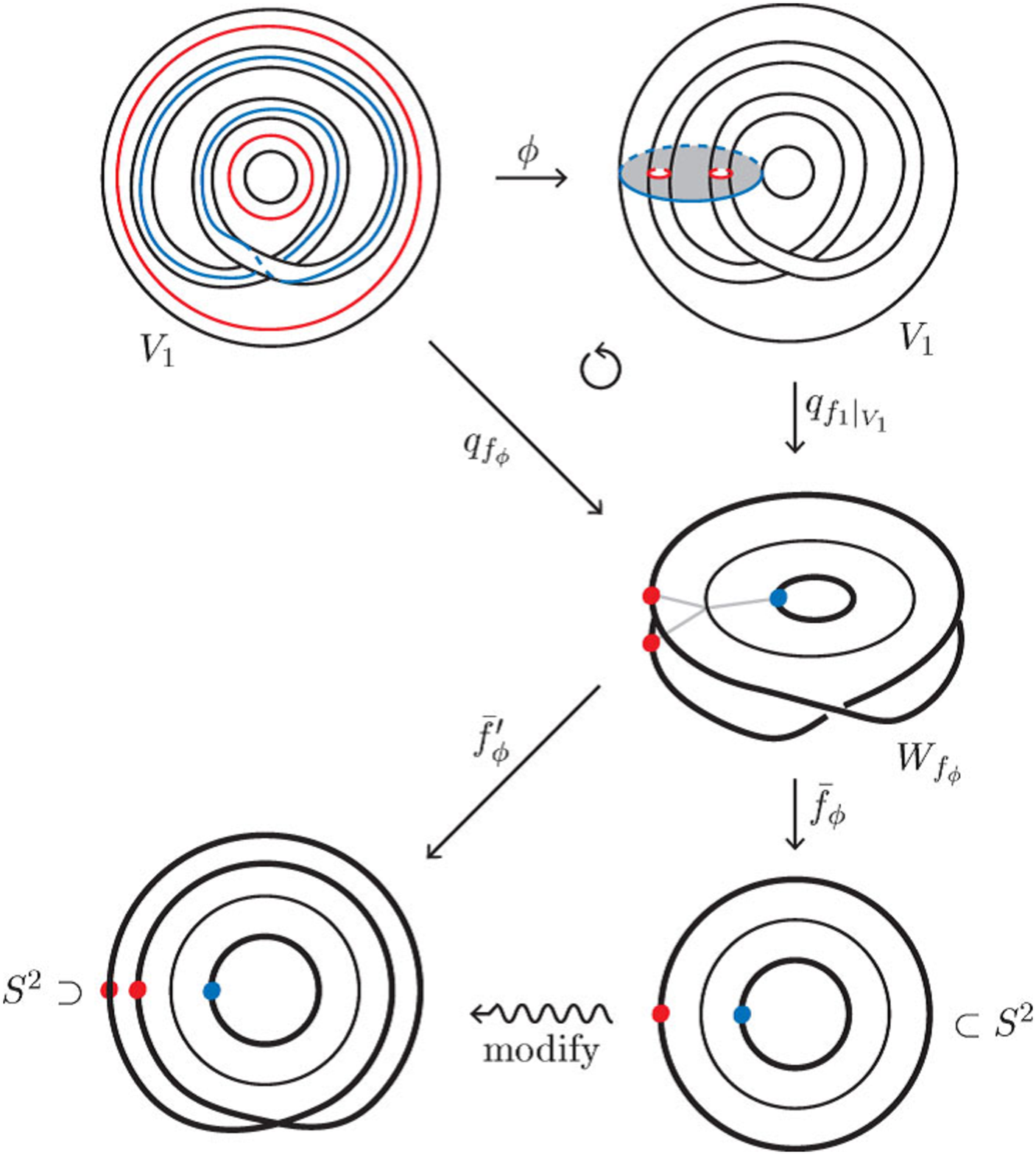}
\caption{Extension of $g_B$ over $D_{c_1}$.}
\label{fig:modification}
\end{center}
\end{figure}

We attach a solid torus to $V_1$ along the boundary torus $T_1$ as follows: 
We first recall the construction of the map $h_2:T^2\times [0,1] \to \Real^2$ given in Saeki \cite[p.~680]{Sae96},
where $T^2$ is a torus.
Let $R$ be a pair of pants and $a$ be one of its boundary components. 
Let $\chi : \partial D^2\times S^1\to a \times S^1$ be the diffeomorphism defined by $\chi(\theta,\tau)=(\theta+\tau,\tau)$
and set $X=(R \times S^1)\cup_{\chi}(D^2\times S^1)$, which is diffeomorphic to $T^2\times [0,1]$.
Then define $h_2:T^2\times [0,1] \to \Real^2$ by
\[
h_2(p)=
\begin{cases}
(h(p_1),p_2)\in\Real\times S^1\subset\Real^2 & \text{for $p=(p_1,p_2)\in R \times S^1$} \\
q_1\in D^2\subset\Real^2 & \text{for $p=(q_1,q_2)\in D^2\times S^1$},
\end{cases}
\]
where $h:R \to \Real$ is the Morse function as in Example \ref{ex:stable maps and S-maps} 
with regarding that $R$ is the pair of pants
bounded by $\alpha, \beta$ and $\gamma$. 
The map $h_2$ has the following important property:
For $i=0,1$, let $\mu_i\subset T^2\times\{i\}$ be a section of $h\times \text{id}_{S^1} : R \times S^1\to\Real\times S^1$
and $\lambda_i$ be a fiber of $h_2$ on $T^2\times\{i\}$.
Then $h_2$ satisfies $[\lambda_0]=[\mu_1]$ and $[\mu_0]=[\lambda_1]$ in $H_1(T^2 \times  [0,1] )$.

Set $U=\Nbd(T_1;V_1)$ and $T_0=\partial U\setminus T_1$.
Let $D$ denote the disk in $S^2$ bounded by the simple closed curve $g_{c_1}(T_0)$.
There exist diffeomorphisms $t:U\to T^2\times [0,1]$ and 
$u:D^2\to D\subset S^2$ such that $u\circ h_2\circ t|_{T_0}=g_{c_1}|_{T_0}$.
Then we define an S-map $g_{c_1}':N\cup(B\times S^1)\cup V_1\to S^2$ by
\[
g_{c_1}'=
\begin{cases}
g_{c_1}(p)& \text{ for } p\in N\cup(B\times S^1)\cup V_1\setminus (\Int \, U \cup T_1)\\
u\circ h_2\circ t(p) & \text{ for } p\in U.
\end{cases}
\]
Since each fiber of the map $g_{c_1}'$ on $T_0$ is a longitude, the fiber of $g_{c_1}'$ on $T_1$ is the meridian of the $(2,1)$-cable knot.
The compact tubular neighborhood of an $S^1$-family of definite fold points
is a solid torus whose fiber on the boundary is the meridian.
Attaching this solid torus to $N\cup(B\times S^1)\cup V_1$ along $T_1$,
we obtain the S-map from $N\cup(B\times S^1)\cup (D^2\times S^1)$ to $S^2$,
where $D^2\times S^1$ is the solid torus obtained from $V_1$ by filling the boundary $T_1$ canonically.

We apply the same construction for all boundary components of the closure of $R_i\setminus B$
and obtain an S-map $g_{C_i}:N\cup (R_i\times S^1)\to S^2$. 

Applying this construction to all surfaces $R_1 , \ldots , R_{n_2}$, 
we obtain the S-map $g_C:M\to S^2$ with the required properties. 
This completes the proof in the case where each surface $R_i$ is a disk.

Suppose that some of $R_1, \ldots, R_{n_2}$ is not a disk. 
Let $\Gamma$ be the union of mutually disjoint, essential simple closed curves in 
$R_1 \cup \cdots \cup R_{n_2}$ that cuts $R_1 \cup \cdots \cup R_{n_2}$ 
into planar surfaces each of which contains at most one 
boundary component of $\Nbd ( \partial P \cup S(P); P)$. 
We remark that the polyhedron $P$ cut off by $\Gamma$ may not be connected. 
We denote by $\hat{P}$ the almost-special branched polyhedron 
obtained by cutting $P$ along $\Gamma$ and then 
capping off the new boundary components by disks. 
Let $\hat{M}$ be the $3$-manifold represented by $\hat P$ after giving suitable 
gleams to the new disk regions. 
Since each region of $\hat P$ is a disk, by applying the above argument,
we obtain an S-map $g_C:\hat M \to S^2$. 
To recover $P$, we first remove suitable number of disks from the regions of $\hat P$
and also remove their preimages in $\hat M$.
We assume that the removed disks are near the boundary of $\Nbd(S(P);P)$ and
it is sufficiently small so that their images in $S^2$ have no intersection.

Let $D_0$ and $D_1$ be disks in $\hat P$ whose boundaries will be identified.
Set $A_j=\Nbd(\partial D_j;\hat P\setminus\Int \, D_j)$ for $j=0,1$.
The preimage $q_{g_C}^{-1}(A_j)$ has two boundary components, one of which is $q_{g_C}^{-1}(\partial D_j)$ named $T_{j,1}$.
We denote the other boundary component by $T_{j,0}$.
Let $A$ and $\rho_0$ be as in Example \ref{ex:stable maps and S-maps} (\ref{ex:A times S1}).
For each $j=0,1$,
there exist diffeomorphisms $t_j:q_{g_C}^{-1}(A_j)\to A\times S^1$ and
$u_j:[-1/2,1]\times S^1\to \bar g_C(A_j)$ such that $u_j\circ\rho_0\circ t_j|_{T_{j,0}}=g_C|_{T_{j,0}}$.
Then we define an S-map $g_D:\hat M\setminus q_{g_C}^{-1}(\Int \, (D_0\cup D_1))\to S^2$
by
\[
g_D(p)=
\begin{cases}
g_C(p) & \text{for $p\not\in q_{g_C}^{-1}(A_j\cup D_j)$} \\
u_j\circ\rho_0\circ t_j(p) & \text{for $p\in q_{g_C}^{-1}(A_j)$}.
\end{cases}
\]
Remark that $g_D(T_{0,1})$ and $g_D(T_{1,1})$ are simple closed curves in $S^2$ and
the interior of the annulus bounded by these curves does not intersect $g_D(A_j)$ for $j=0,1$.
We attach an annulus $A_2$ between $\partial D_0$ and $\partial D_1$ 
and define the S-map $g_D': M' \to S^2$ 
with Stein factorization 
$M'\overset{q_{g_D'}}{\longrightarrow} W_{g_D'}  \overset{\bar g_D'}{\longrightarrow} S^2$
such that $\bar g_D'(A_2)$ is the annulus bounded by $g_D(T_{0,1})$ and $g_D(T_{1,1})$,
where $M'=(\hat M\setminus q_{g_C}^{-1}(\Int \, (D_0\cup D_1)))\cup(A_2\times S^1)$. 
Applying this construction for each pair of disks in $\hat P$ whose boundary should be identified,
and adjusting the gleam by removing and re-gluing a compact neighborhood 
of the fiber of a point in $\partial R_i$ with keeping the existence of an S-map,
we obtain the S-map $g_E:M\to S^2$ with the required properties. This completes the proof.
\end{proof}

\begin{remark}
\label{rmk:branched shadow complexity and crossing singulatities}
{\rm
\begin{enumerate}
\item
Let $(M, L)$ be as in Theorem 
$\ref{thm:branched shadow complexity and crossing singulatities}$, 
and let $f : (M , L) \to \Real^2$ be an S-map. 
If $\mathrm{II}^3 (f)  = \emptyset$, the Stein factorization 
$W_f$ is exactly a branched shadow of $(M, L)$. 
Suppose that $\mathrm{II}^3 (f)  \neq \emptyset$, and 
let $N_1$, $N_2 , \ldots, N_n$ be closed neighborhoods 
of the singular fibers of type $\mathrm{II}^3$. 
We recall that each $N_i$ is a genus 3 handlebody. 
By the latter part of the 
proof of Theorem 
$\ref{thm:branched shadow complexity and crossing singulatities}$, we may construct 
a map $g_i : N_i \to \Real^2$ using 
the shadowed branched polyhedron depicted in Figure $\ref{fig:branching_of_shadow}$.  
Exchanging $f$ with $g_i$ inside $N_i$ for each $i \in \{ 1 , 2 , \ldots , n\}$, 
we get a new S-map $(M , L) \to \Real^2$ having 
no singular fibers of type $\mathrm{II}^3$. 
\item
Let $(M, L)$ be as in Theorem 
$\ref{thm:branched shadow complexity and crossing singulatities}$ and 
let $P$ be its branched shadow having no boundary-vertices.  
By the latter part of the proof of Theorem 
$\ref{thm:branched shadow complexity and crossing singulatities}$, 
we may construct an S-map $f: E(L) \to \Real^2$ 
with $|\mathrm{II}^2 (f) | = c (P)$ and $  \mathrm{II}^3 (f)    = \emptyset$. 
Then the Stein factorization $W_f$ is also an almost-special polyhedron and 
we have a natural inclusion $S(P) \subset S(W_f)$. 
Apparently, each ``region" of $W_f$ is a planar surface.  
However, even if all the regions of $P$ are planar surfaces, 
$W_f$ is not homeomorphic to $P$ in general. 
\item
In \cite{Kod07}, the second-named author showed that 
the Heegaard genus of a closed orientable $3$-manifold coincides with 
an invariant defined by branched spines. 
(See Endoh-Ishii \cite{EI05} for details of the invariant.) 
Here, a spine of a  closed $3$-manifold $M$ is an almost-special polyhedron $P$ embedded in $M$ 
so that $M$ collapsed onto $P$ after removing a small open ball from $M$. 
In other words, branched spines determine the minimal number of the critical points 
of Morse functions on $M$. 
Theorem $\ref{thm:branched shadow complexity and crossing singulatities}$ can be compared with this fact. 
\end{enumerate}}
\end{remark}


The following is a straightforward consequence of Theorem 
\ref{thm:branched shadow complexity and crossing singulatities} 
(The case where $M$ is closed and $L = \emptyset$ is proved by Kalm\'{a}r-Stipsicz \cite[Theorem~1.4]{KS12}). 
See also (1) of the above remark. 
\begin{corollary}
\label{cor:existence of S-maps without CS2}
Let $M$ be a compact, orientable $3$-manifold with $($possibly empty$)$ 
boundary consisting of tori and $L$ a $($possibly empty$)$ link in $M$.
Then there exists an S-map $f: (M , L) \to \Real^2$ without cusp points 
satisfying  
$| \mathrm{II}^2 (f) | = \smc (M, L)$ 
and $\mathrm{II}^3 (f)  = \emptyset$. 
\end{corollary}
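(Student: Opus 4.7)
The plan is to extract the desired S-map directly from the construction in the proof of Theorem \ref{thm:branched shadow complexity and crossing singulatities}, since that construction already produces S-maps of the optimal type.

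First, choose a minimal branched shadow $P$ of $(M,L)$, so $c(P)=\bsc(M,L)$, which equals $\smc(M,L)$ by Theorem \ref{thm:branched shadow complexity and crossing singulatities}. If $P$ has boundary-vertices, apply the elimination procedure of Figure \ref{fig:wall}: attach an oriented compact surface $F_i$ along each component $G_i$ of $\partial_f P$ carrying a boundary-vertex, obtaining a branched shadow $P^\ast$ with $|V(P^\ast)|=c(P)=\smc(M,L)$ and no boundary-vertices. So we may assume from the start that $P$ itself has no boundary-vertices.

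Next, run the construction from the second half of the proof of Theorem \ref{thm:branched shadow complexity and crossing singulatities} verbatim on $P$. That construction assembles an S-map $g\colon M\to S^2$ by prescribing explicit local models: near $\partial P$ the map $g_{\partial P}$ uses only definite folds; over a neighborhood of each circle $l_i\subset S(P)\setminus V(P)$ and each edge $e_j$ it uses the maps $\rho_n$ and $\rho_0$ of Example \ref{ex:stable maps and S-maps}; near each true vertex $v_k$ it uses the local model $g_{v_k}$, which is precisely the local model of a singular fiber of type $\mathrm{II}^2$ depicted in Figure \ref{fig:the stein factorization for II3}; and the extension over the remaining regions $R_i$ uses band moves, the map $h_2$, and solid-torus attachments arising from $S^1$-families of definite folds. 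Inspection of each of these building blocks shows that they contribute neither cusps nor singular fibers of type $\mathrm{II}^3$, and that the only $\mathrm{II}^2$ fibers are the ones supplied by the true vertices of $P$. Consequently $g$ is cusp-free with $|\mathrm{II}^2(g)|=|V(P)|=\smc(M,L)$ and $\mathrm{II}^3(g)=\emptyset$.

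Finally, convert the target from $S^2$ to $\Real^2$ via Lemma \ref{lem:stable maps to R2 and S2}. The lemma replaces $g$ on the preimage of a small disk containing a regular value by an $\rho_A$-type S-map, which itself has no singular fibers of type $\mathrm{II}^2$ or $\mathrm{II}^3$, and it preserves $S_0(g)\supset L$, so the resulting map $f\colon(M,L)\to\Real^2$ is still an S-map of $(M,L)$ with $|\mathrm{II}^2(f)|=\smc(M,L)$ and $\mathrm{II}^3(f)=\emptyset$. The only point requiring verification is that none of the local building blocks used in the $S^2$-valued construction secretly introduce a type $\mathrm{II}^3$ fiber or a cusp, but this is transparent from the explicit formulas for $\rho_n$, $\rho_0$, $\rho_A$, $h_2$, and $g_{v_k}$ quoted in Section \ref{subsec:Stable maps and their Stein factorizations} and the proof of the theorem, so no serious obstacle is expected.
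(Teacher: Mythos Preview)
Your proof is correct and follows exactly the approach the paper intends: the paper simply states that the corollary is a ``straightforward consequence'' of Theorem~\ref{thm:branched shadow complexity and crossing singulatities} and points to Remark~\ref{rmk:branched shadow complexity and crossing singulatities}, while you spell out explicitly that the construction in the second half of that proof produces only $\mathrm{II}^2$ fibers (one per true vertex) and no cusps or $\mathrm{II}^3$ fibers, and that Lemma~\ref{lem:stable maps to R2 and S2} preserves this because the $\rho_A$-modification is supported away from the singular values. There is nothing to add.
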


\begin{lemma}
\label{lem:connected sum}
Let $M_1$ and $M_2$ be compact orientable $3$-manifolds. 
Then $\bsc(M_1 \# M_2) \leqslant \bsc(M_1) + \bsc(M_2)$. 
\end{lemma}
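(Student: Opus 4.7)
The plan is to assemble from minimal branched shadows $P_1 \subset W_1$ and $P_2 \subset W_2$ of $M_1$ and $M_2$ a branched shadow $P$ of $M_1 \# M_2$ satisfying $c(P) = c(P_1) + c(P_2) = \bsc(M_1) + \bsc(M_2)$; this will yield the desired inequality at once.

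First I would perform the combinatorial ``tube connected sum'' of $P_1$ and $P_2$: pick interior points $p_i$ of internal regions $R_i \subset P_i$, remove small open disks $D_i \subset R_i$ about $p_i$, and glue an annulus $A = S^1 \times [0,1]$ to $(P_1 \setminus \Int D_1) \sqcup (P_2 \setminus \Int D_2)$ by identifying $S^1 \times \{0\}$ with $\partial D_1$ and $S^1 \times \{1\}$ with $\partial D_2$ via chosen diffeomorphisms. Along each attaching circle only two sheets of $P$ meet, namely $R_i \setminus \Int D_i$ and $A$, so the attaching circles are regular points of $P$ rather than singular points. Consequently $S(P) = S(P_1) \sqcup S(P_2)$, $V(P) = V(P_1) \sqcup V(P_2)$, and $\mathit{BV}(P) = \mathit{BV}(P_1) \sqcup \mathit{BV}(P_2)$, whence $c(P) = c(P_1) + c(P_2)$.

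Next I would equip $P$ with the boundary coloring inherited from $P_1 \sqcup P_2$ and with the gleam on the merged region $R_1 \# R_2 = (R_1 \setminus \Int D_1) \cup A \cup (R_2 \setminus \Int D_2)$ defined as the sum of the gleams of $R_1$ and $R_2$, and verify that the resulting shadowed polyhedron is a shadow of $M_1 \# M_2$. For this I would use that the boundary connected sum $W_1 \natural W_2$ satisfies $\partial (W_1 \natural W_2) = \partial W_1 \# \partial W_2 = M_1 \# M_2$ modulo the external boundary and can be arranged to collapse onto $P$, the connecting $1$-handle being absorbed into the thickening of $A$; Turaev's reconstruction applied to $P$ then recovers $W_1 \natural W_2$ up to diffeomorphism.

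The main obstacle will be extending the branching to $P$. All regions of $P$ other than $R_1 \# R_2$ are regions of $P_1$ or $P_2$ and retain their orientations, so the non-coincidence condition on each edge of $S(P) = S(P_1) \sqcup S(P_2)$ holds automatically. To orient $R_1 \# R_2$ coherently, I would orient $A$ so that its orientation extends that of $R_1 \setminus \Int D_1$ across $\partial D_1$, which pins down $A$ uniquely. If the resulting induced orientation on $\partial D_2$ does not match the one coming from $R_2 \setminus \Int D_2$, I would reverse the orientations of every region of $P_2$ simultaneously; this is still a branching of $P_2$, since global reversal preserves the non-coincidence condition at every edge of $S(P_2) \setminus V(P_2)$, and it flips the induced orientation on $\partial D_2$, yielding compatibility. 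The resulting $P$ is then a branched shadow of $M_1 \# M_2$ with $c(P) = c(P_1) + c(P_2)$, and therefore $\bsc(M_1 \# M_2) \leqslant \bsc(M_1) + \bsc(M_2)$.
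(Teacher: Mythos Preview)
Your tube construction does not produce a shadow of $M_1 \# M_2$; this is a genuine gap. Take $P_1 = P_2 = S^2$ with gleam~$0$, each a branched shadow of $S^2 \times S^1$. Removing a small disk from each and gluing an annulus yields again $S^2$, and with your prescribed gleam $0+0=0$ this is a shadow of $S^2 \times S^1$, not of $(S^2\times S^1)\#(S^2\times S^1)$. The collapse $W_1 \natural W_2 \searrow P$ you assert cannot exist here, since $W_1 \natural W_2 \simeq P_1 \vee P_2 = S^2 \vee S^2$ while your $P \cong S^2$. More generally, at the $3$-manifold level, removing $\pi_i^{-1}(D_i) \cong D_i \times S^1$ from each $M_i$ and inserting $A \times S^1 \cong T^2 \times [0,1]$ performs a torus sum along the circle fibers over $p_1$ and $p_2$, not a connected sum.

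The paper's construction is different: one \emph{identifies} the small disks $\Nbd(p_1;P_1)$ and $\Nbd(p_2;P_2)$ rather than excising them. This creates a new singular circle (along which the common disk $D$, $R_1 \setminus D$ and $R_2 \setminus D$ meet) and hence a new internal disk region $D$; putting gleam~$0$ on $D$ gives a shadow of $M_1 \# M_2$ by Costantino--Thurston \cite[Lemma~3.2]{CT08}. The new singular circle carries no vertices, so $c(P)=c(P_1)+c(P_2)$, and a branching extends exactly as in your last paragraph (orient $D$ arbitrarily and reverse all orientations on one $P_i$ if necessary).
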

\begin{proof}
Let $P_1$ and $P_2$ be branched shadows of $M_1$ and $M_2$, respectively. 
Choose an interior point $p_i$ of a region of $P_i$ for each $i \in \{ 1,2 \}$. 
By Costantino-Thurston \cite[Lemma~3.2]{CT08}, 
the polyhedron $P$ obtained by gluing small regular neighborhoods of 
$p_1$ and $p_2$ is 
a shadow of $M$ (after putting the gleam 0 on the new disk region.) 
The polyhedron $P$ apparently admits a branching. 
\end{proof}

By Theorem \ref{thm:branched shadow complexity and crossing singulatities}
and Lemma \ref{lem:connected sum}, the following holds. 
\begin{corollary}
\label{cor:S-map of the connected sum}
Let $M_1$ and $M_2$ be compact orientable $3$-manifolds with $($possibly empty$)$ boundary consisting of tori. 
Then we have $\smc(M_1 \# M_2) \leqslant \smc(M_1) + \smc(M_2)$. 
\end{corollary}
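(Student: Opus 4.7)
The plan is to chain together the two results immediately preceding the corollary: Theorem \ref{thm:branched shadow complexity and crossing singulatities}, which identifies $\smc$ with $\bsc$, and Lemma \ref{lem:connected sum}, which gives the subadditivity of $\bsc$ under connected sum. Since both $M_1$ and $M_2$ are assumed to have (possibly empty) toroidal boundary, so is $M_1 \# M_2$, and Theorem \ref{thm:branched shadow complexity and crossing singulatities} applies to all three manifolds (with empty link).

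First, I would apply Theorem \ref{thm:branched shadow complexity and crossing singulatities} to rewrite $\smc(M_1 \# M_2)$ as $\bsc(M_1 \# M_2)$. Then Lemma \ref{lem:connected sum} yields
\[
\bsc(M_1 \# M_2) \leqslant \bsc(M_1) + \bsc(M_2).
\]
Finally, I would apply Theorem \ref{thm:branched shadow complexity and crossing singulatities} once more to each summand on the right-hand side to convert it back into $\smc(M_1) + \smc(M_2)$, completing the chain of inequalities.

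There is no substantive obstacle: all the work has already been carried out in Theorem \ref{thm:branched shadow complexity and crossing singulatities} and in the branched-shadow construction in Lemma \ref{lem:connected sum} (where a branched shadow of $M_1 \# M_2$ is produced by gluing small disk neighborhoods of interior points of regions of branched shadows of $M_1$ and $M_2$, introducing no new true vertices and preserving the branching since the new disk region can be oriented arbitrarily). The proof is therefore a one-line concatenation of these two facts.
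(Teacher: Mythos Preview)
Your proposal is correct and is exactly the paper's own argument: the corollary is stated immediately after Lemma~\ref{lem:connected sum} with the one-line justification ``By Theorem~\ref{thm:branched shadow complexity and crossing singulatities} and Lemma~\ref{lem:connected sum}, the following holds.''
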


\begin{lemma}
\label{lem:torus sum}
\begin{enumerate}
\item
\label{lem:torus sum_case_1}
Let $M_1$ and $M_2$ be compact orientable $3$-manifolds such that 
both of $\partial M_1$ and $\partial M_2$ contain torus components $T_1$ and $T_2$, respectively. 
Let $\varphi: T_1 \to T_2$ be a diffeomorphism. 
Then $\bsc(M_1 \cup_{\varphi} M_2) \leqslant \bsc(M_1) + \bsc(M_2)$. 
\item
\label{lem:torus sum_case_2}
Let $M$ be a compact oriented $3$-manifold such that 
$\partial M$ contains at least two torus components $T_1$ and $T_2$, which are equipped with 
the orientations induced from that of $M$. 
Let $\varphi: T_1 \to T_2$ be an orientation-reversing diffeomorphism. 
Then $\bsc(M / \varphi) \leqslant \bsc(M)$. 
\end{enumerate}
\end{lemma}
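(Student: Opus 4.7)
The plan is to prove both inequalities by an explicit combinatorial construction at the branched shadow level, in the same spirit as the proof of Lemma \ref{lem:connected sum}.

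For part (1), I first take minimal branched shadows $P_1, P_2$ of $M_1, M_2$ with $c(P_i) = \bsc(M_i)$. By the definition of a shadow, each torus $T_i \subset \partial M_i$ corresponds to a circle component $C_i$ of $\partial_e P_i$, since in Turaev's reconstruction each circle of $\partial_e P$ produces a torus boundary of the underlying $3$-manifold after thickening and removal of a regular neighborhood. I then construct a candidate polyhedron $P$ by identifying $C_1$ with $C_2$ through a suitable diffeomorphism $\Phi : C_1 \to C_2$ and attaching a small polyhedral ``bridge'' along the resulting circle $C = C_1 = C_2$ so as to realize the triple-line local model (ii) of Figure \ref{fig:branched_polyhedron} near $C$. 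The bridge can be taken to be as simple as a single $2$-cell whose boundary is glued to $C$; crucially, this attachment is along a smooth circle, so no new true vertex or boundary-vertex is created, and hence $c(P) = c(P_1) + c(P_2) = \bsc(M_1) + \bsc(M_2)$.

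Next I would verify via Turaev's reconstruction that, with an appropriate choice of $\Phi$ and of the gleam on the new $2$-cell, $P$ is in fact a shadow of $M_1 \cup_\varphi M_2$. The relevant bookkeeping is the standard dictionary between gleams and framings of torus boundaries: the gleams on the regions adjacent to $C$ together with $\Phi$ determine the gluing map on the torus, and by tuning these data one can realize any prescribed $\varphi \in \MCG(T^2)$. If a single bridge $2$-cell does not afford enough degrees of freedom to encode $\varphi$, one concatenates several such bridges in series; each additional $2$-cell still contributes no vertex, so the vertex count stays unchanged.

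To equip $P$ with a branching, I keep the orientations of the regions of $P_1$ and $P_2$ provided by their branchings and choose an orientation of the new $2$-cell(s). The only new compatibility condition is along the triple circle $C$, where three regions meet: one inherited from $P_1$, one from $P_2$, and one from the bridge. Reversing the orientation of the bridge $2$-cell flips the orientation it induces on $C$ without affecting the other two, so the branching condition on $C$ can always be satisfied. Part (2) is then completely analogous: apply the same construction to a single minimal branched shadow $P$ of $M$ using the circles $C_1, C_2 \subset \partial_e P$ corresponding to $T_1, T_2 \subset \partial M$, producing a branched shadow $P'$ of $M/\varphi$ with $c(P') = c(P) = \bsc(M)$. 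The main technical obstacle common to both parts is the reconstruction step: verifying that the combinatorial gluing actually reproduces the prescribed topological identification $\varphi$, which is a careful framing computation through Turaev's procedure.
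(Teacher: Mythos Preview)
Your approach is essentially the same as the paper's: build a branched shadow of the glued manifold by inserting a vertex-free intermediate polyhedron $Q$ between the boundary circles $l_1 \subset \partial_e P_1$ and $l_2 \subset \partial_e P_2$ corresponding to $T_1$ and $T_2$. Where the paper differs is that it does not attempt the framing computation you flag as the ``main technical obstacle''; instead it invokes Costantino--Thurston \cite[Proposition~3.27]{CT08}, which produces exactly the needed $Q$ (one of two explicit polyhedra, depending on $\varphi$, each a vertex-free tower of disks on an annulus with appropriate gleams) together with the verification that $P_1 \cup_{l_1=\alpha_1} Q \cup_{\alpha_2=l_2} P_2$ is a shadow of $M_1 \cup_\varphi M_2$. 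So your acknowledged gap is precisely the content of that cited proposition.

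There is one further point worth flagging. In the paper's argument the polyhedron $Q$ comes with a \emph{fixed} branching drawn in a figure, and the branching is extended to $P$ by reversing all orientations of $P_1$ or of $P_2$ if necessary. That trick is unavailable in part~(2), where $l_1$ and $l_2$ lie on a single branched shadow $P$, so the paper inserts an extra copy $Q_0$ of one of the model polyhedra to absorb the orientation mismatch before gluing. Your branching argument, by contrast, leaves the orientation of the bridge region free and adjusts it at the end; this does handle part~(2) without the extra $Q_0$, but your sentence ``Part~(2) is then completely analogous'' hides the fact that the paper's own argument for part~(1) (global reversal of one $P_i$) genuinely does not transfer. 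If you keep your version, you should say explicitly why it still works when $l_1,l_2$ lie on the same polyhedron: at the triple circle $C$ the two sheets coming from $P$ have orientations fixed by the branching of $P$, and regardless of whether they agree or disagree on $C$, the disk can be oriented so that not all three induced orientations coincide. For a multi-bridge tower the same reasoning applies, since each cap disk can be oriented independently.
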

\begin{proof}
\noindent (\ref{lem:torus sum_case_1}) 
Let $P_1$ and $P_2$ be branched shadows of $M_1$ and $M_2$, respectively. 
Let $l_1$ and $l_2$ be the boundary circles of $P_1$ and $P_2$ 
that correspond to the boundary tori $T_1$ and $T_2$, respectively. 
By Costantino-Thurston \cite[Proposition~3.27]{CT08}, 
a shadow $P$ of $M_1 \cup_{\varphi} M_2$ is obtained by 
\[P_1  \cup_{l_1 = \alpha_1} Q \cup_{l_2 = \alpha_2} P_2, \] 
where $Q$ is one of the two oriented polyhedra shown in Figure \ref{fig:torus_sum} equipped with certain gleams. 
\begin{figure}[htbp]
\begin{center}
\includegraphics[width=9cm,clip]{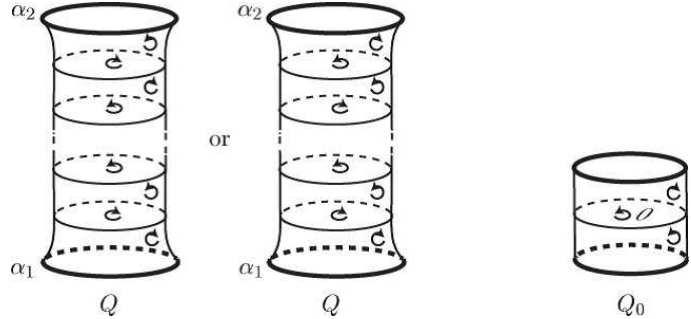}
\caption{Branched shadows in the thickened torus.}
\label{fig:torus_sum}
\end{center}
\end{figure}
Then the orientations of the regions of $Q$ shown in the figure 
extends to the whole of $P$, after reversing the orientations of the regions of $P_1$ or 
$P_2$, if necessary. 

\noindent (\ref{lem:torus sum_case_2})
Let $P_1$ be a branched shadow of $M$. 
Let $l_1$ and $l_2$ be the boundary circles of $P_1$ 
that correspond to the boundary tori $T_1$ and $T_2$, respectively. 
As in (\ref{lem:torus sum_case_1}), 
the polyhedron $P$ obtained by 
attaching $Q$ along the loops $l_1$ and $l_2$ is a shadow of $\bsc(M / \varphi)$. 
If the orientations of the regions of $Q$ shown in the figure 
extends to the whole of $P$ we are done. 
Otherwise, we first attach a shadow polyhedron $Q_0$ shown on the right-hand side in Figure \ref{fig:torus_sum}, 
to $Q$ along $\alpha_1$ and 
then we apply the former operation using $Q_0 \cup Q$ instead of $Q$. 
This does not affect the topology of the resulting manifold, and now 
we may extend the orientation of the regions of $Q_0 \cup Q$ to the whole of $P$.    
\end{proof}

Theorem \ref{thm:branched shadow complexity and crossing singulatities}
and Lemma \ref{lem:torus sum} yield the following: 
\begin{corollary}
\label{cor:S-map of the torus sum}
\begin{enumerate}
\item
\label{lem:torus sum_case_1}
Let $M_1$ and $M_2$ be compact orientable $3$-manifolds with boundary consisting of tori. 
Choose a component $T_i$ of $\partial M_i$ for each $i \in \{1, 2 \}$. 
Let $\varphi: T_1 \to T_2$ be a diffeomorphism. 
Then $\smc (M_1 \cup_{\varphi} M_2) \leqslant \smc (M_1) + \smc (M_2)$. 
\item
\label{lem:torus sum_case_2}
Let $M$ be a compact oriented $3$-manifold with boundary consisting of at least two tori. 
Choose two components $T_1$ and $T_2$ of $\partial M$, and equip them with 
the orientations induced from that of $M$. 
Let $\varphi: T_1 \to T_2$ be an orientation-reversing diffeomorphism. 
Then $\smc (M / \varphi) \leqslant \smc(M)$. 
\end{enumerate}
\end{corollary}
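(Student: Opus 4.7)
The plan is to combine the two ingredients already in hand: Theorem \ref{thm:branched shadow complexity and crossing singulatities}, which identifies $\smc$ with $\bsc$ for any compact orientable 3-manifold with toroidal boundary, and Lemma \ref{lem:torus sum}, which establishes the corresponding subadditivity and monotonicity statements for $\bsc$. Since both statements in Corollary \ref{cor:S-map of the torus sum} already have exact analogues for $\bsc$ in Lemma \ref{lem:torus sum}, there is essentially nothing new to do beyond translating via the main theorem.

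For part (1), I would first rewrite $\smc(M_i) = \bsc(M_i)$ for $i=1,2$ using Theorem \ref{thm:branched shadow complexity and crossing singulatities}. Next, I note that $M_1 \cup_\varphi M_2$ is again a compact orientable 3-manifold whose boundary consists of tori, since all boundary components other than the glued pair $T_1, T_2$ persist. Hence Theorem \ref{thm:branched shadow complexity and crossing singulatities} applies to the glued manifold and gives $\smc(M_1 \cup_\varphi M_2) = \bsc(M_1 \cup_\varphi M_2)$. Substituting into Lemma \ref{lem:torus sum}(\ref{lem:torus sum_case_1}) then yields the desired inequality $\smc(M_1 \cup_\varphi M_2) \leqslant \smc(M_1) + \smc(M_2)$.

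Part (2) is handled in exactly the same fashion, using Lemma \ref{lem:torus sum}(\ref{lem:torus sum_case_2}) in place of (\ref{lem:torus sum_case_1}). The quotient $M/\varphi$ is a compact orientable 3-manifold whose boundary still consists of tori, so Theorem \ref{thm:branched shadow complexity and crossing singulatities} gives $\smc(M/\varphi) = \bsc(M/\varphi)$, and the chain $\smc(M/\varphi) = \bsc(M/\varphi) \leqslant \bsc(M) = \smc(M)$ finishes the argument.

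There is no genuine obstacle: the result is a direct corollary of the two previous statements, and the only point worth checking is that the class of 3-manifolds under consideration (compact orientable with toroidal boundary) is closed under the gluing operations in question, which is immediate because the gluings are performed along entire boundary tori.
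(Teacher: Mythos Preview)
Your proposal is correct and matches the paper's own approach exactly: the corollary is stated as an immediate consequence of Theorem~\ref{thm:branched shadow complexity and crossing singulatities} and Lemma~\ref{lem:torus sum}, with no additional argument given.
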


For a link $L$ in a $3$-manifold $M$, we denote by 
$E(L)$ its exterior, that is, 
the complement of an open tubular neighborhood of $L$.

\begin{lemma}
\label{lem:branched shadow complexities of a link and its exterior}
Let $M$ be a compact orientable $3$-manifold with $($possibly empty$)$ boundary 
consisting of tori, and let $L$ be a link in $M$. 
Then we have $\bsc (M) \leqslant \bsc (M, L) = \bsc (E(L))$. 
\end{lemma}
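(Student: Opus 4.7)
The plan is to establish the equality $\bsc(M, L) = \bsc(E(L))$ by a relabeling of the boundary decoration of branched shadows, and the inequality $\bsc(M) \leqslant \bsc(M, L)$ by capping off internal boundary circles with disks of appropriate gleam.

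For the equality, I rely on the fact that $c(P) = |V(P)| + |\mathit{BV}(P)|$ depends only on the local models of $P$ as an almost-special polyhedron, not on the coloring of $\partial P$. Given a branched shadow $P$ of $(M, L)$, let $P'$ be the same polyhedron with the same branching and gleams but with every component of $\partial_i P$ re-colored as external. By Turaev's reconstruction,
\[ \partial W \setminus \Nbd(\partial_e P \cup \partial_i P; \partial W) = M \setminus \Nbd(L) = E(L), \]
so $P'$ is a branched shadow of $E(L)$ with $c(P') = c(P)$, yielding $\bsc(E(L)) \leqslant \bsc(M, L)$. For the reverse inequality, start with a branched shadow of $E(L)$ and re-color the components of $\partial_e P$ that correspond to $\partial N(L) \subset \partial E(L)$ as components of $\partial_i P'$; since the underlying polyhedron, gleams, and branching are unchanged, Turaev reconstruction now yields $E(L) \cup \Nbd(L) = M$ with internal link precisely $L$, producing a branched shadow of $(M, L)$ of the same complexity.

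For the inequality $\bsc(M) \leqslant \bsc(M, L)$, I begin with a minimal branched shadow $P$ of $(M, L)$ and cap each component $\gamma_j$ of $\partial_i P$ off with a disk $D_j$, equipping $D_j$ with the half-integer gleam that realizes the trivial (meridional) Dehn filling of the tubular neighborhood of the corresponding link component $L_j \subset M$. With this choice the reconstruction gives back $M$ itself, and since $\partial_i P' = \emptyset$ the resulting polyhedron $P'$ is a shadow of $M$. The branching of $P$ extends to $P'$ by orienting each $D_j$ arbitrarily: after capping, the circle $\gamma_j$ has only two adjacent regions ($D_j$ and the original region of $P$ that bounded it), so $\gamma_j \subset P' \setminus S(P')$ and the branching condition imposes no constraint there. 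A routine inspection of the local models in Figure \ref{fig:branched_polyhedron} shows that each boundary-vertex of $P$ lying on $\partial_i P$, of model (v), is converted by the capping into a true vertex of model (iii), while no other vertices are created or destroyed, so $c(P') = c(P)$ and hence $\bsc(M) \leqslant \bsc(M, L)$.

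The main obstacle lies in the second step — specifically, checking that the gleam on each capping disk $D_j$ can indeed be chosen to realize the meridional Dehn filling, so that $P'$ represents $M$ itself rather than some other surgery on $E(L)$. In Turaev's reconstruction the gleam is a free half-integer parameter recording a twisting offset along $\gamma_j$, and matching it with the framing of $L_j$ inside $M$ induced by the surrounding gleams of $P$ is a standard (if delicate) local computation; the existence of the required value is automatic. The remaining claim, that the capping turns model (v) into model (iii) and leaves all other local models untouched, is a direct check against Figure \ref{fig:branched_polyhedron}.
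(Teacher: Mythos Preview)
Your argument for $\bsc(E(L)) \leqslant \bsc(M, L)$ (relabel $i \to e$) matches the paper's. The genuine gap is in the reverse direction $\bsc(M, L) \leqslant \bsc(E(L))$. Starting from a minimal branched shadow $P$ of $E(L)$ and relabeling the appropriate $\partial_e$-circles $\gamma_j$ to $\partial_i$ yields a pair $(M', L')$ with $M' = E(L) \cup \bigcup_j \Nbd(\gamma_j; \partial W)$ and $L' = \bigcup_j \gamma_j$. Certainly $E(L') \cong E(L)$, but the meridian of the solid torus $\Nbd(\gamma_j; \partial W)$ on the shared boundary torus is determined by how $\gamma_j$ sits inside $\partial W$, which has nothing to do with the original pair $(M,L)$; there is no reason it should agree with the meridian of $L_j$ in $M$. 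So in general $(M', L') \not\cong (M, L)$, and you have only bounded $\bsc(M', L')$ for one particular Dehn filling of $E(L)$. The paper closes this gap via Lemma~\ref{lem:torus sum}: since a disk with gleam $0$ is a branched shadow of $D^2 \times S^1$ with $\bsc = 0$, the torus-sum construction (which inserts the auxiliary pieces $Q$ of Figure~\ref{fig:torus_sum} carrying adjustable gleams) realizes \emph{any} prescribed gluing map $\varphi$, hence in particular the correct meridional filling recovering $(M,L)$.

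Your treatment of $\bsc(M) \leqslant \bsc(M, L)$ is also more elaborate than needed and contains a small confusion. Because $L$ is a link, every component of $\partial_i P$ is a circle lying entirely in model~(iv); there are no boundary-vertices on $\partial_i P$ to convert, and capping such a circle by a disk does not create a separate region carrying its own gleam --- the disk is absorbed into the adjacent external region (as you yourself note when writing $\gamma_j \subset P' \setminus S(P')$), so speaking of ``the gleam on $D_j$'' is not meaningful. The paper's argument is immediate from the definition: relabel each $\partial_i$-circle as $\partial_f$. The underlying polyhedron, branching, gleams, and $\partial_e P$ are untouched, so the reconstructed $3$-manifold is still $M$, now with empty internal link, and $c(P)$ is unchanged.
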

\begin{proof}
The left equality follows immediately from the definition. 

We recall that a disk equipped with the gleam $0$ is a branched shadow of $D^2 \times S^1$. 
Hence by Lemma \ref{lem:torus sum} we have 
\[ \bsc(M,L) \leqslant \bsc (E(L)) + \bsc (D^2 \times S^1) = \bsc (E(L))
. \]
The other inequality follows from the fact that 
by changing 
the colors of the boundary circles of a given branched shadow of $(M, L)$ corresponding to 
$L$, which are all ``$i$", to ``$e$"'s, we get a branched shadow of $E(L)$.  
\end{proof}

By Theorem 
\ref{thm:branched shadow complexity and crossing singulatities}
and Lemma \ref{lem:branched shadow complexities of a link and its exterior}
we have the following:

\begin{corollary}
\label{cor:stable map complexities of a link and its exterior}
Let $M$ be a compact, orientable $3$-manifold with $($possibly empty$)$ 
boundary consisting of tori and $L$ a ink in $M$. 
Then we have $\smc (M) \leqslant \smc (M,L) = \smc (E(L))$. 
\end{corollary}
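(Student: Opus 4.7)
The statement is a direct combination of two results established earlier in the excerpt, so my plan is simply to chain them together. By Theorem \ref{thm:branched shadow complexity and crossing singulatities} applied to the three pairs $(M,\emptyset)$, $(M,L)$, and $(E(L),\emptyset)$, every stable map complexity appearing in the desired inequality can be replaced by the corresponding branched shadow complexity:
\[
\smc(M)=\bsc(M),\quad \smc(M,L)=\bsc(M,L),\quad \smc(E(L))=\bsc(E(L)).
\]
After this rewriting, the claim becomes exactly the inequality and equality
\[
\bsc(M)\leqslant \bsc(M,L)=\bsc(E(L)),
\]
which is precisely the statement of Lemma \ref{lem:branched shadow complexities of a link and its exterior}.

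So the proof is a two-line argument. The only thing to check is that Theorem \ref{thm:branched shadow complexity and crossing singulatities} is applicable in all three situations; but $M$ has torus boundary, $L$ is a link in $M$, and $E(L)$ is again a compact orientable $3$-manifold with torus boundary (the original tori of $\partial M$ together with the new tori created by removing a tubular neighborhood of $L$), so the hypotheses of the theorem are met in each case.

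There is no real obstacle here, since both building blocks are already proved. If I wanted to be slightly more self-contained I could recall why the two steps of Lemma \ref{lem:branched shadow complexities of a link and its exterior} hold: the inequality $\bsc(M)\leqslant\bsc(M,L)$ is immediate from the definition (any branched shadow of $(M,L)$ is in particular a branched shadow of $M$), while the equality $\bsc(M,L)=\bsc(E(L))$ is obtained by switching the color of the boundary circles corresponding to $L$ between ``$i$'' and ``$e$'', together with the observation that a single disk with gleam $0$ is a (vertex-free) branched shadow of the solid torus, so that Dehn filling via Lemma \ref{lem:torus sum} does not increase the branched shadow complexity.
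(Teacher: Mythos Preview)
Your proof is correct and follows exactly the same route as the paper: the corollary is stated immediately after Lemma \ref{lem:branched shadow complexities of a link and its exterior} with the one-line justification that it follows from Theorem \ref{thm:branched shadow complexity and crossing singulatities} together with that lemma. Your additional unpacking of why Lemma \ref{lem:branched shadow complexities of a link and its exterior} holds is also faithful to the paper's argument.
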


We recall that a compact orientable 3-manifold is called a {\it graph manifold} if 
we can cut it off by embedded tori into $S^1$-bundles over surfaces. 
A $($possibly empty$)$ link in a compact orientable 3-manifold is called a {\it graph link} if 
its exterior is a graph manifold. 

\begin{proposition}
\label{prop:graph manifolds}
Let $M$ be a compact, orientable $3$-manifold with $($possibly empty$)$ 
boundary consisting of tori and $L$ a $($possibly empty$)$ link in $M$.
Then the following conditions are equivalent: 
\begin{enumerate}
\item
\label{prop:graph manifolds item 1}
$\mathrm{sc}(M, L) = 0$
\item
\label{prop:graph manifolds item 2}
$\bsc(M, L) = 0$
\item
\label{prop:graph manifolds item 3}
$L$ is a graph link. 
\end{enumerate}
\end{proposition}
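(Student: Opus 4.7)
The implication $(\ref{prop:graph manifolds item 2}) \Rightarrow (\ref{prop:graph manifolds item 1})$ is immediate: every branched shadow is, by definition, a shadow, so $\mathrm{sc}(M,L) \leqslant \bsc(M,L)$. The overall plan for the remaining implications is to route through the stable map complexity, using Theorem \ref{thm:branched shadow complexity and crossing singulatities} to convert between $\bsc$ and $\smc$, and invoking Saeki \cite{Sae96} and Costantino-Thurston \cite{CT08} for the characterization of graph manifolds in terms of S-maps and shadows, respectively.

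For $(\ref{prop:graph manifolds item 3}) \Rightarrow (\ref{prop:graph manifolds item 2})$, suppose that $L$ is a graph link, so that $E(L)$ is a graph manifold. Saeki's theorem produces, for any closed graph manifold, a stable map into $\Real^2$ with no non-simple crossings. Carrying out his construction piece by piece over a JSJ decomposition of $E(L)$ into Seifert fibered blocks yields an S-map $f\colon E(L) \to \Real^2$ with $\mathrm{II}^2(f) = \mathrm{II}^3(f) = \emptyset$: on each Seifert piece one composes the Seifert fibration with a Morse function on the base orbifold, and near each toroidal boundary one arranges the local model $(u,x,y) \mapsto (u,x)$ required in Section \ref{subsec:Stable maps and their Stein factorizations}, gluing the S-maps on adjacent pieces through the collar model of Example \ref{ex:stable maps and S-maps}(\ref{ex:A times S1}). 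This gives $\smc(E(L)) = 0$; hence $\smc(M,L) = 0$ by Corollary \ref{cor:stable map complexities of a link and its exterior}, and $\bsc(M,L) = 0$ by Theorem \ref{thm:branched shadow complexity and crossing singulatities}.

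For $(\ref{prop:graph manifolds item 1}) \Rightarrow (\ref{prop:graph manifolds item 3})$, let $P$ be a shadow of $(M,L)$ with no true vertices and no boundary-vertices. Only the local models (i), (ii), (iv) of Figure \ref{fig:branched_polyhedron} appear, so $S(P)$ is a disjoint union of circles with neighborhoods of type (ii), and each region of $P$ is a compact orientable surface. Turaev's reconstruction of the ambient $4$-manifold $W$ then assembles $W$ out of $D^2$-bundles over the regions of $P$ and of thickened $Y$-bundles over the circles of $S(P)$. Consequently $\partial W$, and hence $M$, decomposes along tori dual to $S(P)$ into $S^1$-bundles over the regions of $P$, with $L$ lying on the Seifert fibers coming from the internal regions; this is precisely the graph link condition. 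This is essentially the shadow-complexity-zero characterization of graph links carried out in Costantino-Thurston \cite{CT08}.

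The main obstacle lies in $(\ref{prop:graph manifolds item 3}) \Rightarrow (\ref{prop:graph manifolds item 2})$: Saeki's original result is stated for closed $3$-manifolds, and it has to be upgraded to S-maps on manifolds with torus boundary without introducing non-simple crossings. The technical point is to arrange the S-maps built on adjacent Seifert pieces to agree across each shared JSJ torus after a small perturbation, so that no singular fibers of type $\mathrm{II}^2$ or $\mathrm{II}^3$ appear at the gluing. Once this matching is achieved, Theorem \ref{thm:branched shadow complexity and crossing singulatities} and Corollary \ref{cor:stable map complexities of a link and its exterior} complete the proof automatically.
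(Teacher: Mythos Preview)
Your proof is correct and follows essentially the same route as the paper: $(\ref{prop:graph manifolds item 2})\Rightarrow(\ref{prop:graph manifolds item 1})$ by definition, $(\ref{prop:graph manifolds item 1})\Rightarrow(\ref{prop:graph manifolds item 3})$ via the Costantino--Thurston argument, and $(\ref{prop:graph manifolds item 3})\Rightarrow(\ref{prop:graph manifolds item 2})$ by producing an S-map on $E(L)$ with no non-simple crossings and then invoking Corollary~\ref{cor:stable map complexities of a link and its exterior} and Theorem~\ref{thm:branched shadow complexity and crossing singulatities}.

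The only noteworthy difference is in how the S-map on $E(L)$ is obtained. You build it explicitly block by block over a JSJ decomposition and flag the matching across JSJ tori as the ``main obstacle''. The paper instead treats this step as a black box: it simply cites Saeki~\cite{Sae96} for the existence of an S-map $f\colon E(L)\to S^2$ with $f|_{S(f)}$ an embedding (Saeki's construction for graph manifolds already accommodates torus boundary), and then implicitly uses Lemma~\ref{lem:stable maps to R2 and S2} to pass from $S^2$ to $\Real^2$. So what you present as an obstacle requiring a careful gluing argument is, in the paper, absorbed into the citation; your version has the virtue of being self-contained, while the paper's is shorter.
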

\begin{proof}
By definition, (\ref{prop:graph manifolds item 1}) follows from (\ref{prop:graph manifolds item 2}). 
Also, by the proof of Costantino-Thurston \cite[Proposition 3.31]{CT08}, 
(\ref{prop:graph manifolds item 3}) follows from (\ref{prop:graph manifolds item 1}).
Below we show that (\ref{prop:graph manifolds item 2}) follows from (\ref{prop:graph manifolds item 3}). 
Suppose that $L$ is a graph $($possibly empty$)$ link. 
By Lemma \ref{lem:branched shadow complexities of a link and its exterior}, it suffices to show that 
$\bsc(E(L)) = 0$. 
By Saeki \cite{Sae96}, there exists an S-map $f: E(L) \to S^2$ such that 
$f |_{S(f)}$ is a smooth embedding. 
In particular, $f$ has no singular fibers of types 
$\mathrm{II}^2$ or $\mathrm{II}^3$, 
and hence $\smc (E(L)) = 0$. 
Thus by Corollary \ref{cor:stable map complexities of a link and its exterior} 
we have $\smc(M,L)=0$. 
Now we get $\bsc (M, L) = 0$ by Theorem \ref{thm:branched shadow complexity and crossing singulatities}. 
\end{proof}

\section{Stable maps of links}
\label{sec:Stable maps of links}

In this section we will always identify $S^1$ with $\Real / 2 \pi \Integer$. 

Let $D$ be a flat oriented disk properly embedded in the standard 4-ball $B^4$, 
and let $D'$ be the closure of $D \setminus \Nbd (\partial D; D) $. 
Let $\pi : S^3 \to D$ be the projection induced by a collapsing $B^4 \searrow D$. 
We may assume without loss of generality that 
the preimage $V_h = \pi^{-1} (D')$ is a solid torus, that we identify with $D' \times S^1$, and 
$\pi |_{V_h}$ is an S-map without singular points. 
We put $V_v = S^3 \setminus \Int \, V_h$. 
Let $L = L_1 \sqcup L_2 \sqcup \cdots \sqcup L_n$ be an $n$-component link in $S^3$. 
We can push $L$ by isotopy so that $L$ lies in $D' \times [[-\varepsilon], [+ \varepsilon]]$, where 
$\varepsilon$ is a sufficiently small positive real number, and $\pi |_{V_h}$ is generic with respect to 
$L$. 
The image $\pi (L)$, together with over/under crossing information at each double point, gives a 
diagram $D_L$ for $L$ on $D'$. 
Consider the mapping cylinder 
\[ P_{D_L}^* = ( (L \times [0,1]) \sqcup D' ) / (x, 0)  \sim \pi (x) . \] 
Then $P_{D_L}^*$, together with a color ``$i$" of each $L_j \times \{ 1 \} \subset \partial P_L^*$， 
is a shadow of $(S^3, L)$. 
Refer to Costantino-Thurston \cite[Examples~3.15-17]{CT08}. 
We fix an orientation of each component of $L$. 
The set of the regions of $P_{D_L}^*$ consists of 
subsurfaces of $D'$ and $A_j = (L_j \times [0,1]) / \sim$, $j \in \{ 1,2, \ldots, n\}$. 
To these subsurfaces of $D'$, we give the orientations induced by the prefixed orientation of $D'$.  
Also, we give an orientation to each $A_j$ in such a way that 
the induced orientation of $L_j \times \{ 1 \}$ coincides with that of the prefixed orientation of $L_j$. 
See Figure \ref{fig:orientations_of_annuli}. 
\begin{figure}[htbp]
\begin{center}
\includegraphics[width=9cm,clip]{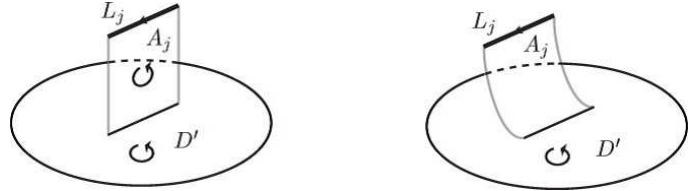}
\caption{The branching on $A_j$.}
\label{fig:orientations_of_annuli}
\end{center}
\end{figure}
With these orientations of the regions, $P_{D_L}^*$ becomes a branched shadow of $(S^3, L)$.  
Let $R$ be the region of $P_{D_L}^*$ touching $\partial D'$. 
 
We say that the oriented link diagram $D_L$ is {\it admissible} if $R$ satisfies the following: 
\begin{itemize}
\item
the closure of $R$ is diffeomorphic to an annulus; and 
\item
on each component of $S(P_{D_L}^*) \setminus V(P_{D_L}^*)$ touched by $R$ and $A_j$ ($j \in \{ 1, 2, \ldots, n\}$), 
these two regions induce the same orientation. 
\end{itemize}
We note that if we choose, for example, $D_L$ to be a closed braid presentation of $L$ and 
give each component of $L$ a suitable orientation, then $R$ satisfies these conditions. 
If $D_L$ is admissible, $P_{D_L} = P_{D_L}^* \setminus R$ is still a branched shadow of $(S^3, L)$. 
In the following we will discuss this branched shadow. 

Let $f: (S^3, L) \to \Real^2$ be a stable map constructed from $P_L$ 
by the argument in Theorem \ref{thm:branched shadow complexity and crossing singulatities}. 
Let $S^3 \overset{q_f}{\longrightarrow} W_f  \overset{\bar{f}}{\longrightarrow} S^2$ be the 
Stein factorization of $f$. 
From the construction of the stable map, we can identify the quotient space 
$W_f$ with $P_{D_L}$ in a natural way. 
The preimage of a point of $P_{D_L}$ under the map $q_f$ can be described as follows: 
\begin{enumerate}
\item
Let $x_1$ be a point in $D' \setminus P_{D_L} $. 
Then the preimage $q_f^{-1} (x_1)$, which is a regular fiber of $f$, is $\{ x_1 \} \times S^1 \subset V_h$. 
See Figure \ref{fig:interior_region}. 
\begin{figure}[htbp]
\begin{center}
\includegraphics[width=10cm,clip]{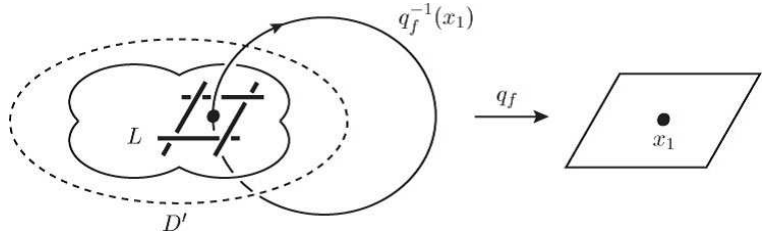}
\caption{The configuration of a regular fiber.}
\label{fig:interior_region}
\end{center}
\end{figure}
\item
Let $x_2$ be a point in $L_i \times (0,1)$. 
Then the preimage $q_f^{-1} (x_2) $, which is also a regular fiber, is a meridian of $L_i$. 
See Figure \ref{fig:exterior_region}. 
\begin{figure}[htbp]
\begin{center}
\includegraphics[width=8cm,clip]{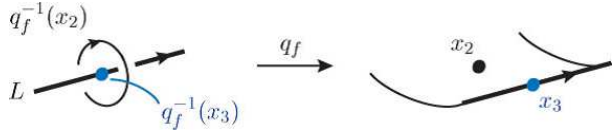}
\caption{The configuration of singular fiber of type $\mathrm{I}^0$.}
\label{fig:exterior_region}
\end{center}
\end{figure}
\item
Let $x_3$ be a point in $L_i \times \{ 1 \}$. 
Then the preimage $q_f^{-1} (x_3)$, which is a singular fiber of type $\mathrm{I}^0$, is a point in $L_i$. 
See Figure \ref{fig:exterior_region}. 
\item
Let $x_4$ be a point in $S(P_{D_L}) \setminus V(P_{D_L})$. 
Let $\alpha$ be a transverse arc for $f$ at $\bar{f} (x_4)$. 
Then the preimage $Y = \bar{f}^{-1} (\alpha)$ is a Y-shaped graph. 
One endpoint of $Y$, say $y_1$, is mapped to one endpoint of $\alpha$ by $\bar{f}$ while 
the remaining two endpoints, say $y_2$ and $y_3$, are 
mapped to the other endpoint of $\alpha$. 
The preimage $q_f^{-1} (Y)$ is a pair of pants spanned by 
the three circles $q_f^{-1} (y_1)$, $q_f^{-1} (y_2)$ and $q_f^{-1} (y_3)$. 
Note that the configuration of these circles has already explained in Cases (1) and (2). 
Then the preimage $q_f^{-1} (x_4)$, which is a singular fiber of type $\mathrm{I}^1$, 
lies in $q_f^{-1} (Y)$ as a spine as illustrated in Figure \ref{fig:edge}. 
\begin{figure}[htbp]
\begin{center}
\includegraphics[width=14cm,clip]{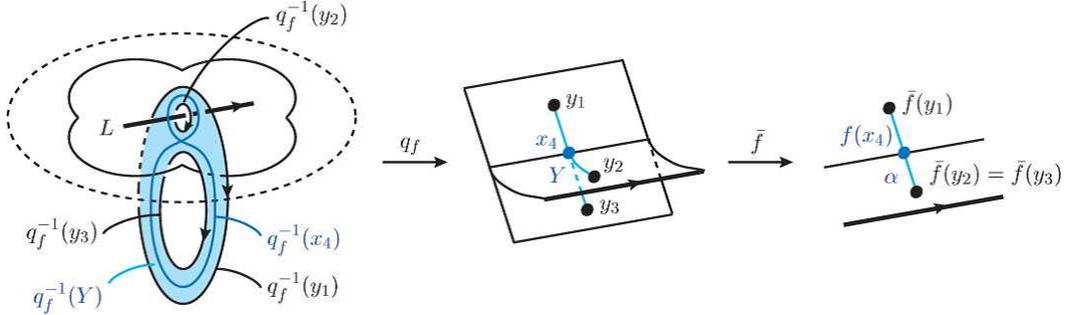}
\caption{The configuration of singular fiber of type $\mathrm{I}^1$.}
\label{fig:edge}
\end{center}
\end{figure}
\item
Finally let $x_5$ be a true vertex of $P_{D_L}$. 
Let $\alpha$ be a transverse arc for $f$ at $\bar{f} (x_5)$ as shown in Figure \ref{fig:vertex}. 
Then the preimage $X = \bar{f}^{-1} (\alpha)$ is the $X$-shaped graph, i.e., the cone on four points. 
One endpoint of $X$, say $y_1$, is mapped to one endpoint of $\alpha$ by $\bar{f}$ while 
the remaining three endpoints, say $y_2$, $y_3$ and $y_4$, are 
mapped to the other endpoint of $\alpha$. 
The preimage $q_f^{-1} (X)$ is a disk with three holes spanned by 
the four circles $q_f^{-1} (y_1)$, $q_f^{-1} (y_2)$, $q_f^{-1} (y_3)$ and $q_f^{-1} (y_4)$. 
Note, again, that the configuration of these circles has already explained in Cases (1) and (2). 
Then the preimage $q_f^{-1} (x_5)$, which is a singular fiber of type $\mathrm{II}^2$, 
lies in $q_f^{-1} (X)$ as a spine as illustrated in Figure \ref{fig:vertex}. 
\begin{figure}[htbp]
\begin{center}
\includegraphics[width=14cm,clip]{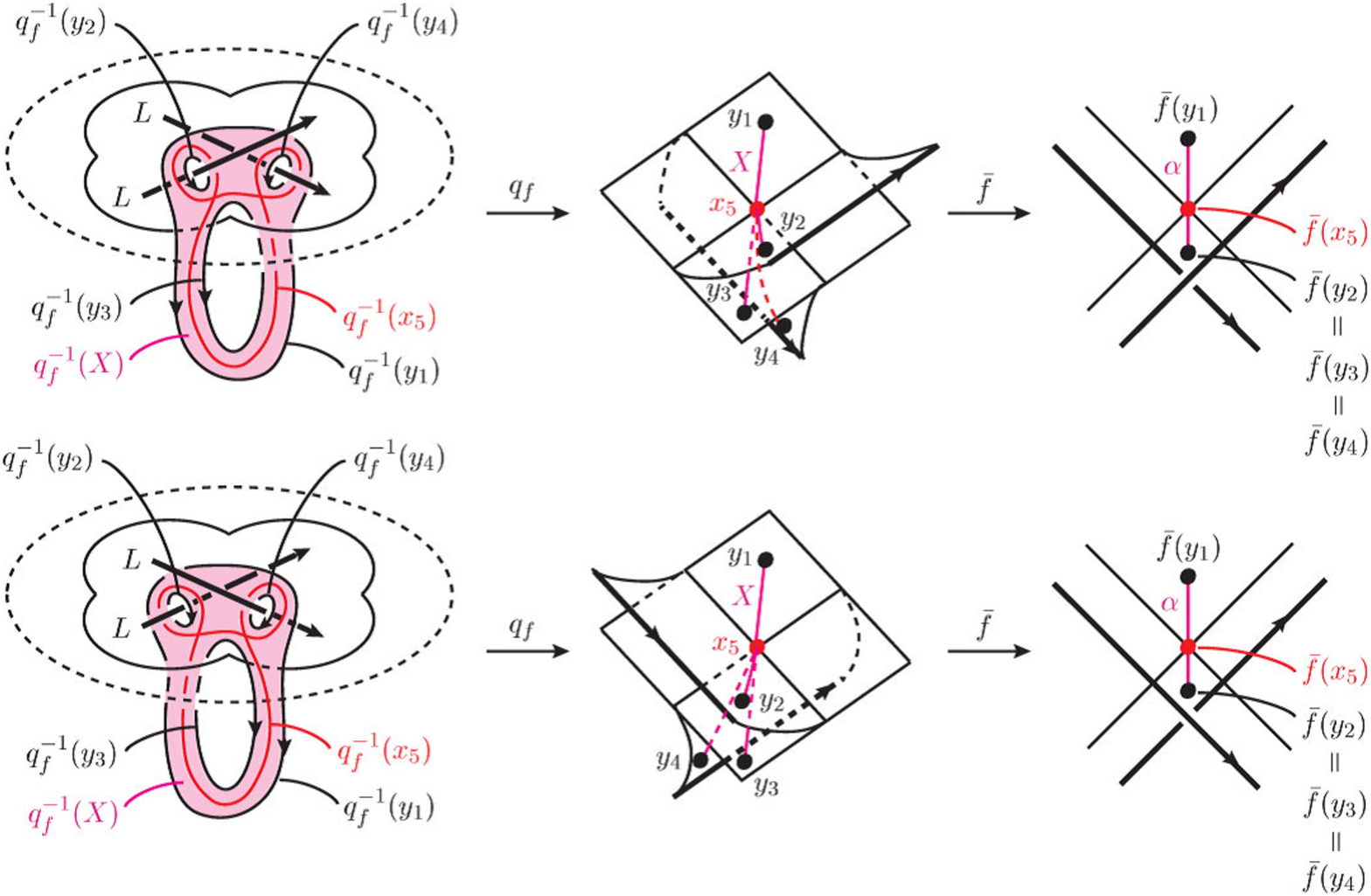}
\caption{The configuration of singular fiber of type $\mathrm{II}^2$.}
\label{fig:vertex}
\end{center}
\end{figure}
\end{enumerate}

Let $L$ be a link in $S^3$. 
Recall that the {\it crossing number} of $L$, denoted by $\cross (L)$, is the minimal number of crossings in any of its diagrams on 
$\Real^2$. 

\begin{lemma}
\label{lem:admissible link diagram}
Let $L$ be an oriented non-split link in $S^3$.
There exists a diagram $D_L$ for $L$ on an oriented disk with $\cross (L)$ 
crossing points such that $D_L$ becomes admissible after reversing 
the orientations of all link components if necessary. 
\end{lemma}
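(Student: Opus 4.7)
The plan is to construct the admissible diagram from a minimum crossing diagram of $L$ on $S^2$ by carefully choosing the face at infinity, and then to verify the orientation-matching condition combinatorially.

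First, I would fix any diagram $D_L^0$ of $L$ on $S^2$ with exactly $\cross(L)$ crossings. Because $L$ is non-split, the underlying $4$-valent planar graph of $D_L^0$ is connected, and hence has at least one face $F^*$ whose boundary is a simple closed curve; for instance, any innermost face inside a leaf block of the block-cut decomposition of the diagram graph works. Placing $F^*$ to be the complement of a disk $D' \subset S^2$ produces a diagram $D_L$ on $D'$ with the same number of crossings. The outer region $R$ of the associated polyhedron $P_{D_L}^*$ is then diffeomorphic to an annulus, verifying the first condition in the definition of admissibility.

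Next, I would unpack the second admissibility condition. By the convention on $A_j$ coming from Figure \ref{fig:orientations_of_annuli}, the orientation that $A_j$ induces on an arc $e = L_j \times \{0\} \subset \pi(L_j)$ is the reverse of the prefixed orientation of $L_j$, whereas the orientation induced by $R$ on $e$ is the boundary orientation coming from the fixed orientation of $D'$. The admissibility condition thus reduces to the following: traversing the outer cycle $C = \partial R \setminus \partial D'$ in the boundary orientation of $R$, the direction of traversal on each arc of each $L_j$ must be opposite to the prefixed orientation of $L_j$, and the collection of these requirements must be compatible with a single consistent orientation of each $L_j$, possibly after the global reversal of all component orientations permitted by the lemma.

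The main obstacle is arranging this compatibility. At each crossing $v$ on $C$, the local condition is equivalent to the requirement that the corner of $R$ at $v$ coincides with the Seifert-smoothing corner for the oriented diagram. At crossings between two distinct components this corner can be toggled by changing the relative orientation of the two components, yielding a system of constraints over $\mathbb{F}_2$ that is solvable after at most one global reversal. However, at a self-crossing of a single $L_j$ the Seifert-smoothing corners are intrinsic to the diagram and to the choice of $F^*$, unaffected by reversing $L_j$. To handle this, I would apply a Seifert-circle argument: orient $L$ and apply the Seifert algorithm, then choose $F^*$ among the faces of simple boundary so that, after smoothing at each outer corner, $C$ becomes an outermost Seifert circle of the resulting decomposition. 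With this choice every self-crossing on $C$ automatically sits at a Seifert-consistent corner, and the inter-component constraints are then resolved as above. Since the construction only changes which face of $D_L^0$ is placed at infinity, the final diagram $D_L$ still has exactly $\cross(L)$ crossings.
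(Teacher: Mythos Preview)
Your argument ultimately lands on the same construction as the paper, but the paper's route is much shorter. The paper simply applies Seifert's algorithm to a minimal-crossing diagram on $S^2$, picks any disk component $d$ of the complement of the Seifert circles, and removes a point of $d$ to obtain the disk $D$. Nothing more is needed. The point is that such a $d$ is automatically a face of the link diagram with simple boundary: if a crossing lay in $d$, both of its smoothed arcs would lie on the single Seifert circle $c=\partial d$, and then the two complementary arcs of $c$ would have to join interleaved pairs of endpoints on a small circle about the crossing, forcing $c$ to self-intersect. So condition~1 holds. Since every boundary arc of $d$ inherits the single coherent orientation of $c$, condition~2 holds after at most one global reversal.

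Your detour through a general face of simple boundary and an $\mathbb{F}_2$ system of orientation constraints is unnecessary, and the $\mathbb{F}_2$ step does not work as written: the lemma fixes the orientation of $L$ and permits only a \emph{global} reversal, which changes no relative orientation between components, so you cannot toggle inter-component corners independently. Fortunately your final Seifert-circle step already resolves every corner on the outer boundary at once---self-crossings and inter-component crossings alike---because once that boundary is a Seifert circle all of its arcs are coherently oriented and one global flip is exactly the freedom required. You should drop the intermediate discussion and go straight to the Seifert argument, which is what the paper does.
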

\begin{proof}
We start with a diagram $D_L$ for $L$ on the oriented 2-sphere $S^2$ with minimal crossing number. 
Using Seifert's algorithm for $D_L$ we get a disjoint collection $\{ c_1, c_2, \ldots, c_m \} $ of circles on $S^2$. 
We refer to, for instance, Rolfsen \cite{Rol76} for Seifert's algorithm. 
Let $d$ be an open disk component of $S^2 \setminus \bigcup_{i=1}^m c_i$. 
Let $D$ be a disk obtained by removing a small open neighborhood of an interior point of $d$. 
We equip $D$ with the orientation induced by that of $S^2$. 
Then the diagram $D_L$ on $D$ with the orientation of $L$ or its inverse, is the required diagram for $L$. 
\end{proof}

The next theorem immediately follows from the above argument and Lemma \ref{lem:admissible link diagram}.

\begin{theorem}
\label{thm:Stable maps of links}
For a link $L$ in $S^3$, there exists a proper stable map $f: (S^3, L) \to \Real^2$ without cusp points such that 
\begin{enumerate}
\item
the Stein factorization $W_f$ is contractible; and 
\item
$| \mathrm{II}^2 (f) | \leqslant \cross (L) - 2$ and $\mathrm{II}^3 (f) = \emptyset$.  
\end{enumerate} 
Further, the configuration of both the regular and singular fibers 
of $f$ can be described using a diagram of $L$. 
\end{theorem}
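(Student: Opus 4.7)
The plan is to apply Lemma~\ref{lem:admissible link diagram} to produce an admissible oriented diagram $D_L$ of $L$ on a disk with exactly $\cross(L)$ crossings, then feed the branched shadow $P_{D_L} = P_{D_L}^{*} \setminus R$ constructed above into the machinery of Theorem~\ref{thm:branched shadow complexity and crossing singulatities}. Concretely, combining Remark~\ref{rmk:branched shadow complexity and crossing singulatities}(1) with the construction used in the proof of that theorem, $P_{D_L}$ gives rise to a stable map $f \colon (S^3, L) \to \Real^2$ without cusp points whose Stein factorization is naturally identified with $P_{D_L}$, satisfying $|\mathrm{II}^2(f)| = c(P_{D_L})$ and $\mathrm{II}^3(f) = \emptyset$. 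Since $\partial_i P_{D_L} = L$, the map $f$ automatically has $S_0(f) = L$ and is therefore proper; moreover, the fiber descriptions in cases (1)--(5) preceding the theorem, via the identification $W_f \cong P_{D_L}$, deliver the final assertion of the theorem.

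Next, I would establish the bound in (2) by counting the vertices of $P_{D_L}$. The polyhedron $P_{D_L}^{*}$ has $\cross(L)$ true vertices (one per crossing of $D_L$) and no boundary-vertices. When $R$ is removed, the admissibility condition becomes decisive: at each edge of $S(P_{D_L}^{*})$ lying on $\partial R$, the three incident regions are $R$, a face $F$ of $D_L$, and a wall $A_j$, and admissibility forces $R$ and $A_j$ to induce the same orientation on the edge. The branching condition then forces $F$ to induce the opposite orientation, so after removing $R$ the two remaining sheets $F$ and $A_j$ glue smoothly across the edge, turning it into a regular interior line of $P_{D_L}$. A direct link computation at a crossing of $D_L$ lying on $\partial R$ shows that its link---originally the $K_{4}$ of a true vertex---collapses to a theta graph, i.e., the link of a regular triple-line interior point; hence this crossing disappears as a vertex of $P_{D_L}$, and no boundary-vertex appears in the process. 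Consequently $c(P_{D_L}) = \cross(L) - k$, where $k$ is the number of crossings of $D_L$ lying on $\partial R$. Since Lemma~\ref{lem:admissible link diagram} produces a minimal crossing diagram---which is automatically reduced, as a nugatory crossing would contradict minimality---every face of $D_L$ on $S^{2}$ is bordered by at least two distinct crossings, so $k \geq 2$ and $c(P_{D_L}) \leq \cross(L) - 2$.

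Finally, assertion (1) will follow from the topological structure of $P_{D_L} = (D' \setminus R) \cup A_1 \cup \cdots \cup A_n$: since $D' \setminus R$ is a closed disk bounded by the inner cycle of $R$ and each wall $A_j$ is an annulus attached along $\pi(L_j) \subset D_L$, collapsing each $A_j$ along its interval coordinate deformation retracts $P_{D_L}$ onto the disk $D' \setminus R$, which is contractible. The hard step of the plan will be the local link computation at the vertices on $\partial R$: one must verify carefully that the smoothings forced by admissibility eliminate these vertices without producing any new boundary-vertices. Once that step is settled, properness of $f$, contractibility of $W_f$, and the diagrammatic description of the fibers all follow routinely from the earlier machinery and the explicit descriptions tied to $D_L$.
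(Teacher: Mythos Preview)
Your proposal is correct and follows essentially the same route as the paper, which simply states that the theorem ``immediately follows from the above argument and Lemma~\ref{lem:admissible link diagram}.'' You have in fact supplied the details the paper leaves implicit: the local link computation showing that each crossing on the inner boundary of $R$ drops from a $K_4$-link to a theta-graph link (hence ceases to be a true vertex and creates no boundary-vertex), the resulting count $c(P_{D_L}) = \cross(L)-k$ with $k\geq 2$ for a reduced minimal diagram, and the contractibility of $P_{D_L}$ via the collapse of the walls $A_j$ onto the disk $D'\setminus R$.

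One small point of care: your appeal to Remark~\ref{rmk:branched shadow complexity and crossing singulatities}(1) does not by itself yield the identification $W_f \cong P_{D_L}$ (indeed Remark~\ref{rmk:branched shadow complexity and crossing singulatities}(2) warns that $W_f$ need not equal $P$ in general), and it is precisely this identification that gives properness via $\partial W_f = \partial P_{D_L} = L$. The paper asserts this identification directly in the discussion preceding the theorem (``we can identify the quotient space $W_f$ with $P_{D_L}$ in a natural way''), so you should cite that passage rather than the remark; once granted, your deduction of $S_0(f)=L$ is correct.
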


\begin{example}
The left-hand side in Figure \ref{fig:figure_eight} shows an admissible diagram $D_K$ of the figure-eight knot $K$. 
The right-hand side in the figure illustrates the contractible branched polyhedron $P_{D_K}$ constructed from $D_K$. 
Then we have a proper stable map $f: ( S^3 , K ) \to \Real^2$ with Stein factorization 
$(S^3, K) \overset{q_f}{\longrightarrow} P_{D_K}  \overset{\bar{f}}{\longrightarrow} \Real^2$ 
such that 
$| \mathrm{II}^2 (f) | = 2$, $\mathrm{II}^3 (f) = \emptyset$ and $C(f) = \emptyset$.  
The configuration of the preimages of the points $x_1, x_2, \ldots, x_6$ in $P_{D_K}$ is shown in the figure. 
\begin{figure}[htbp]
\begin{center}
\includegraphics[width=11cm,clip]{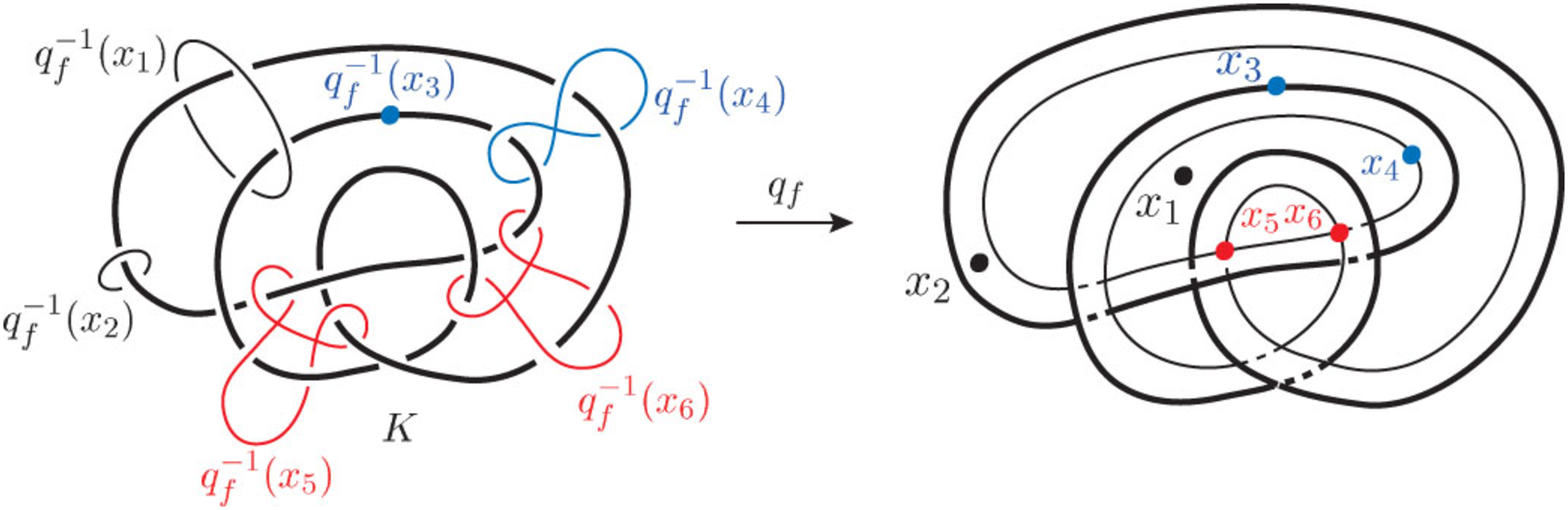}
\caption{Regular and singular fibers of a stable map of the figure-eight knot. 
$q_f^{-1} (x_1)$ and $q_f^{-1} (x_2)$ are regular fibers of $f$. 
$q_f^{-1} (x_3)$ is a singular fiber of type $\mathrm{I}^0$. 
$q_f^{-1} (x_4)$ is a singular fiber of type $\mathrm{I}^1$. 
$q_f^{-1} (x_5)$ and $q_f^{-1} (x_6)$ are singular fibers of type $\mathrm{II}^2$.}
\label{fig:figure_eight}
\end{center}
\end{figure}
\end{example}

From Corollary \ref{cor:stable map complexities of a link and its exterior}
and Theorem \ref{thm:Stable maps of links}, 
the following holds: 
\begin{corollary}
\label{cor:stable map of a surgered manifold}
Let $M$ be a closed orientable $3$-manifold obtained from $S^3$ by surgery along a link $L \subset S^3$. 
Then there exists a stable map $f: M \to \Real^2$ without cusp points such that 
$| \mathrm{II}^2 (f) | \leqslant \cross (L) - 2$ and $\mathrm{II}^3 (f) = \emptyset$.  
\end{corollary}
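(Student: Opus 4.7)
The plan is to chain together three results already established in the excerpt: the explicit construction of a ``good'' stable map on $(S^3, L)$ from a link diagram, the monotonicity of stable map complexity under Dehn filling, and the availability of an S-map realizing the complexity with no type $\mathrm{II}^3$ fibers at all.

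First, apply Theorem \ref{thm:Stable maps of links} to $L\subset S^3$. This produces a proper stable map $f_0:(S^3,L)\to\Real^2$ without cusp points satisfying $|\mathrm{II}^2(f_0)|\leqslant \cross(L)-2$ and $\mathrm{II}^3(f_0)=\emptyset$. By the definition of stable map complexity,
\[
\smc(S^3,L)\;\leqslant\;c(f_0)\;=\;|\mathrm{II}^2(f_0)|+2\,|\mathrm{II}^3(f_0)|\;\leqslant\;\cross(L)-2.
\]

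Next, translate this upper bound to the surgered manifold $M$. Let $L'\subset M$ be the surgery dual link, namely the union of the core circles of the solid tori glued in by the Dehn surgery. By construction the exterior of $L'$ in $M$ is diffeomorphic to the exterior of $L$ in $S^3$, so Corollary \ref{cor:stable map complexities of a link and its exterior} applied once to $(S^3,L)$ and once to $(M,L')$ gives
\[
\smc(M)\;\leqslant\;\smc(M,L')\;=\;\smc(E(L'))\;=\;\smc(E(L))\;=\;\smc(S^3,L)\;\leqslant\;\cross(L)-2.
\]

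Finally, to upgrade the weighted bound into the pointwise bound asserted in the corollary, invoke Corollary \ref{cor:existence of S-maps without CS2} for the closed manifold $M$ (with empty link). It supplies a stable map $f:M\to\Real^2$ without cusp points such that $\mathrm{II}^3(f)=\emptyset$ and $|\mathrm{II}^2(f)|=\smc(M)\leqslant\cross(L)-2$, which is exactly the required map. There is no substantial obstacle here since every non-trivial step is already done in the cited results; the only point that requires care is the identification $E(L')\cong E(L)$ used to transfer the complexity bound across the surgery, but this is immediate from the definition of Dehn surgery.
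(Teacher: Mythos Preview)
Your proof is correct and follows essentially the same route as the paper, which simply cites Corollary~\ref{cor:stable map complexities of a link and its exterior} and Theorem~\ref{thm:Stable maps of links}; you have carefully unpacked those citations, including the surgery-dual identification $E(L')\cong E(L)$ and the final appeal to Corollary~\ref{cor:existence of S-maps without CS2} to realize the bound with $\mathrm{II}^3(f)=\emptyset$, both of which the paper leaves implicit.
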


\begin{remark}
{\rm
A result similar to 
Corollary $\ref{cor:stable map of a surgered manifold}$ 
is obtained in Kalm\'{a}r-Stipsicz \cite[Theorem~1.2]{KS12}. 
The numbers of singular fibers of types $\mathrm{II}^2$ and $\mathrm{II}^3$, 
and cusp points in our Corollary $\ref{cor:stable map of a surgered manifold}$ 
are less than theirs (in particular, $C(f) = \emptyset$ in our result).} 
\end{remark}

\section{Knots with branched shadow complexity 1}
\label{sec:Knots with branched shadow complexity 1}

In this section, we use the arguments in the previous section 
to characterize hyperbolic links in $S^3$ whose exteriors have stable map
complexity $1$.
. 

\begin{lemma}
\label{lem:epimorphism between the fundamental groups}
Let $P$ be a shadow of a compact orientable $3$-manifold $M$. 
Suppose that $P$ is embedded in a compact orientable smooth $4$-manifold $W$ in a locally flat way so that 
$W \setminus P \cong \partial W \times (0,1]$. 
Let $\pi: M \to P$ be a projection given by a collapsing $W \searrow P$. 
Then the homomorphism $\pi_1(M) \to \pi_1(P)$ induced by $\pi$ is an epimorphism. 
\end{lemma}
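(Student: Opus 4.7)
My plan is to factor the projection $\pi\colon M\to P$ as $M\stackrel{\iota}{\hookrightarrow}W\stackrel{r}{\longrightarrow}P$, where $r$ is the retraction induced by the collapsing $W\searrow P$, and then reduce surjectivity of $\pi_\ast$ to an inclusion surjectivity statement. Since $W\setminus P\cong\partial W\times(0,1]$, the polyhedron $P$ is a deformation retract of $W$, so $r_\ast\colon\pi_1(W)\to\pi_1(P)$ is an isomorphism. By the definition of the projection coming from a collapsing, $\pi$ agrees with $r\circ\iota$ up to homotopy. Hence $\pi_\ast=r_\ast\circ\iota_\ast$, and the claim reduces to showing that $\iota_\ast\colon\pi_1(M)\to\pi_1(W)$ is surjective.

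To establish the surjectivity of $\iota_\ast$, I use a handle-theoretic duality argument. Because $W$ collapses onto the $2$-dimensional polyhedron $P$, it admits a handle decomposition using only handles of indices $0$, $1$, and $2$. Turning the associated Morse function upside down presents $W$ as a collar of $\partial W$ with handles of indices $2$, $3$, and $4$ attached successively. Such handles can never introduce new generators of $\pi_1$: a $2$-handle only imposes a relation, while handles of index $\geqslant 3$ are attached along simply-connected spheres and leave $\pi_1$ unchanged. Consequently, the inclusion $\partial W\hookrightarrow W$ induces a surjection on $\pi_1$.

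If $\partial_e P=\emptyset$, then $M=\partial W$ and we are done by combining the two steps above. Otherwise, $M=\partial W\setminus\Nbd(\partial_e P;\partial W)$, and I still need to verify that $\pi_1(M)\to\pi_1(\partial W)$ is surjective. Inspecting the reconstruction of $W$ near the external boundary $\partial_e P$ (as recalled in Section~\ref{subsec:Shadows and branched shadows of 3-manifolds}), each component $N$ of the removed neighborhood $\Nbd(\partial_e P;\partial W)$ is a ``standard'' fibered piece which deformation-retracts onto the corresponding component of $\partial_e P$ through its boundary $\partial N\subset M$. This gives $N$ a ``boundary push-off'' property: any arc inside $N$ with endpoints on $\partial N$ is homotopic rel endpoints into $\partial N$. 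Patching these local push-offs along an arbitrary loop $\gamma\subset\partial W$ produces a loop in $M$ homotopic in $\partial W$ to $\gamma$, yielding the desired surjectivity.

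The main obstacle is this last step, since the natural transversality argument in the $3$-dimensional $\partial W$ does not immediately disentangle a $1$-cycle from a codimension-zero neighborhood. Everything hinges on confirming that each component of $\Nbd(\partial_e P;\partial W)$ really has the claimed boundary push-off structure, and this verification reduces to a case-by-case inspection of the local models (disk regions, singular edges, and vertices of $P$ meeting $\partial_e P$) provided by Turaev's reconstruction.
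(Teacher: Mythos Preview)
Your argument is correct, but it takes a substantially more elaborate route than the paper's proof. The paper dispatches the lemma in one line: every closed curve $l$ in $P$ can be lifted to a closed curve $\tilde{l}$ in $M$, so $\pi_*[\tilde{l}]=[l]$. This works because the fibers of $\pi$ are all path-connected (points, circles, or the suspensions $T_3$, $T_4$ of finite sets, as described in the proof of Theorem~\ref{thm:branched shadow complexity and crossing singulatities}); one puts $l$ in general position with respect to $S(P)$, lifts it piecewise over the regions using the trivial $S^1$-bundle structure, and connects the pieces through the connected fibers over $S(P)$. Your approach---factoring through $W$, invoking handle duality to get $\pi_1(\partial W)\twoheadrightarrow\pi_1(W)$, and then pushing loops off $\Nbd(\partial_eP;\partial W)$---is valid and perhaps more conceptual, but it imports more machinery than the situation requires.

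One small correction to your final step: a component $N$ of $\Nbd(\partial_eP;\partial W)$ does \emph{not} deformation-retract onto its boundary $\partial N$ (a solid torus does not retract onto its boundary torus). What you actually need, and what does hold, is $\pi_1(N,\partial N)=0$, which follows from the surjectivity of $\pi_1(\partial N)\to\pi_1(N)$ for a handlebody. Alternatively, and more simply, you can bypass the local push-off analysis entirely: $\partial_eP$ is a $1$-complex in the $3$-manifold $\partial W$, hence of codimension $2$, so any loop in $\partial W$ can be made disjoint from it by general position and then isotoped into $M=\partial W\setminus\Nbd(\partial_eP;\partial W)$. Your remark that transversality ``does not immediately disentangle a $1$-cycle from a codimension-zero neighborhood'' misidentifies the obstacle---the trick is to work with the codimension-$2$ core rather than the neighborhood itself.
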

\begin{proof}
The proof follows immediately from the fact that every closed curve $l$ in $P$ 
can be lifted to a closed curve $\tilde{l}$ in $M$. 
\end{proof}

In the following, let $U_1$, $U_2$, $U_3$ and $U_4$ be the diagrams of the 
trivial knot illustrated in Figure \ref{fig:unknot_diagrams}. 
\begin{figure}[htbp]
\begin{center}
\includegraphics[width=12cm,clip]{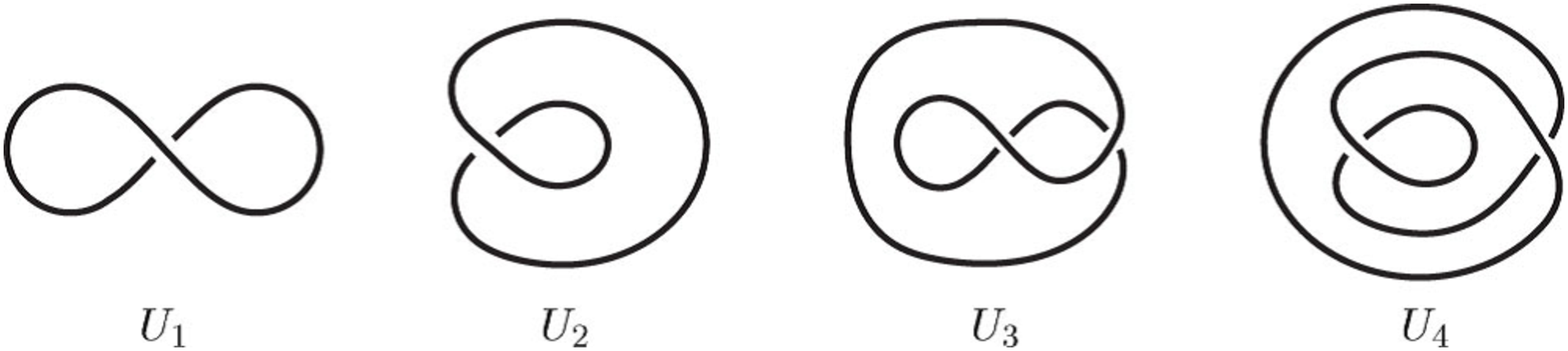}
\caption{The diagrams $U_1, U_2, \ldots, U_4$ of the trivial knot.}
\label{fig:unknot_diagrams}
\end{center}
\end{figure}

\begin{lemma}
\label{lem:non-separating torus}
Let $M$ be a closed orientable $3$-manifold that does not contain non-separating tori, and let $P$ be a shadow of $M$. 
Then any simple closed curve in a region of $P$ separates $P$.
\end{lemma}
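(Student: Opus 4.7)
The plan is to prove the contrapositive: assuming $c$ is a simple closed curve in a region $R$ of $P$ that does \emph{not} separate $P$, I will produce a non-separating embedded torus in $M$, contradicting the hypothesis.

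First, I construct the candidate torus. Let $\pi\colon M \to P$ be the projection from Lemma \ref{lem:epimorphism between the fundamental groups}, arising from a collapsing $W \searrow P$. Because $c$ lies in the interior of the region $R$ (which is an oriented smooth surface, over which the local $4$-dimensional thickening $W_R$ is described in Section \ref{subsec:Shadows and branched shadows of 3-manifolds} as the trivial $D^2$-bundle $R \times [-1,1] \times [-1,1]$), the preimage under $\pi$ of a small annular neighborhood of $c$ in $P$ is a smooth $S^1$-bundle. Since $M$ is orientable, the resulting torus $T_c := \pi^{-1}(c)$ is embedded and two-sided in $M$.

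Second, I encode separating-ness via mod-$2$ intersection. Since $c$ is two-sided in $P$ and disjoint from the singular locus $S(P) \cup \partial P$, the mod-$2$ transverse intersection count defines a homomorphism $\ell_c \colon \pi_1(P) \to \Integer/2$; moreover, $c$ separates $P$ if and only if $\ell_c \equiv 0$, as a loop in $P$ transverse to $c$ must cross between the two sides each time, while conversely a path in $P \setminus c$ joining opposite sides of $c$ in a transverse arc closes up to a loop crossing $c$ exactly once. Analogously in $M$, the two-sided torus $T_c$ yields $\ell_{T_c} \colon \pi_1(M) \to \Integer/2$, and $T_c$ separates $M$ if and only if $\ell_{T_c} \equiv 0$.

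Third, I would compare the two homomorphisms. Near $T_c$ the projection $\pi$ is a genuine $S^1$-bundle with $\pi^{-1}(c) = T_c$, so a transverse intersection of a loop $\tilde\gamma \subset M$ with $T_c$ projects under $\pi$ to a transverse intersection of $\pi \circ \tilde\gamma$ with $c$, and conversely; choosing generic representatives this yields the identity $\ell_{T_c} = \ell_c \circ \pi_*$. Combined with the surjectivity of $\pi_*$ from Lemma \ref{lem:epimorphism between the fundamental groups}, if $c$ does not separate $P$ then $\ell_c \neq 0$, hence $\ell_{T_c} = \ell_c \circ \pi_* \neq 0$, and $T_c$ is a non-separating torus in $M$, contradicting the hypothesis.

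The main delicacy is verifying that $\ell_c$ is a well-defined homotopy invariant on $\pi_1(P)$ and that $\ell_{T_c} = \ell_c \circ \pi_*$ despite the singularities of $P$ and the non-bundle behavior of $\pi$ over $S(P)$. Both resolve because $c$ lies strictly in the smooth locus of $P$: the standard cobordism-of-preimages argument applied to a generic homotopy $H\colon [0,1]^2 \to P$ needs transversality only inside the orientable surface $R$, and near $T_c$ the map $\pi$ is a genuine $S^1$-bundle projection, so generic transversality arguments apply without having to analyze the singular fibers of $\pi$ over $S(P)$.
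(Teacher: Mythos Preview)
Your proof is correct and follows the same approach as the paper: argue by contrapositive, take $T_c = \pi^{-1}(c)$, and show it is a non-separating torus in $M$. The paper's proof is a single sentence asserting without further justification that $\pi^{-1}(l)$ is non-separating when $l$ is; you have supplied the details (the mod-$2$ intersection homomorphisms and the identity $\ell_{T_c} = \ell_c \circ \pi_*$ combined with surjectivity of $\pi_*$) that make this assertion rigorous.
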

\begin{proof}
Let $\pi: M \to P$ be a projection as in Lemma \ref{lem:epimorphism between the fundamental groups}. 
If a simple closed curve $l$ in a region of $P$ does not separate $P$, then $\pi^{-1} (l)$ is a non-separating torus in $M$, 
which is a contradiction. 
\end{proof}

\begin{theorem}
\label{thm:vertex 1 connected}
Let $K$ be a hyperbolic knot in $S^3$.
Then $(S^3, K)$ admits a branched shadow $P$ such that $c (P) = 1$ 
and $S(P)$ is connected 
if and only if $K$ is one of the knots shown in Figure $\ref{fig:vertex_1_connected}$. 
\begin{figure}[htbp]
\begin{center}
\includegraphics[width= \textwidth,clip]{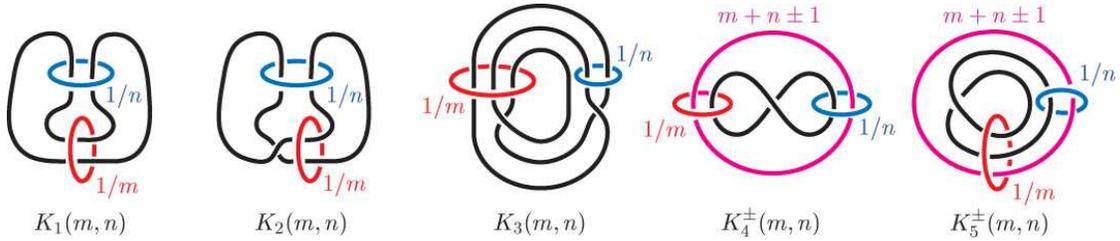}
\caption{The knots admitting a branched shadow with one vertex and connected singular set.}
\label{fig:vertex_1_connected}
\end{center}
\end{figure}
\end{theorem}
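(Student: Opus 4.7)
The plan is to prove both implications by means of a combinatorial enumeration of one-vertex branched shadows with connected singular set, backed up by Turaev's reconstruction to recover the knot.

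\emph{The ``if'' direction.} For each knot $K$ in Figure \ref{fig:vertex_1_connected}, I would exhibit an explicit branched shadow $P$ of $(S^3, K)$ with $c(P)=1$ and $S(P)$ connected. These can be produced directly by the construction of Section \ref{sec:Stable maps of links}: an admissible diagram of $K$ whose polyhedron $P_{D_K}$ has exactly one true vertex and connected singular set yields precisely such a shadow. Verification that the reconstructed pair is $(S^3, K)$ amounts to checking gleams and reading off the knot type from the diagram.

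\emph{The ``only if'' direction.} Assume $(S^3, K)$ admits a branched shadow $P$ with $c(P) = 1$ and connected $S(P)$. Two cases arise: either (A) $P$ has one true vertex and no boundary-vertex, or (B) the reverse. In case (A), $S(P)$ is a four-valent graph with one vertex, so by connectedness it is homeomorphic to a bouquet of two circles. In case (B), $S(P)$ has a unique boundary-vertex, and its other singularities are triple edges meeting it. In each case I would classify, up to branched-polyhedral isomorphism, how the regions can be glued to $S(P)$ in a way compatible with a branching; the branching condition forbidding the three local orientations on a triple edge from coinciding cuts down the possibilities considerably. Because $K$ is a knot, $\partial_i P$ has a single component, which further pins down the structure.

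\emph{Gleams and identification of the knot.} For each combinatorial type produced by the enumeration, I would consider admissible gleams. Turaev's reconstruction determines $(W,P)$, hence $\partial W$, uniquely. The requirement that $\partial W \setminus \Nbd(\partial_e P; \partial W) \cong S^3$ and that the resulting knot be hyperbolic heavily restricts the gleams. Lemma \ref{lem:non-separating torus} rules out configurations in which a simple closed curve in a region would descend to a non-separating torus in $S^3$, which further prunes the list. For the surviving cases, the knot type is read off from the shadow by the method of Section \ref{sec:Stable maps of links} and compared against Figure \ref{fig:vertex_1_connected}; configurations producing the unknot $U_i$ as a ``trivial part'' can be disposed of because they would yield a shadow of $(S^3, K)$ with fewer vertices after simplification, violating hyperbolicity via Proposition \ref{prop:graph manifolds}.

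\emph{Main obstacle.} The principal difficulty is the completeness and tractability of the enumeration: one must ensure that no branched shadow with one vertex and connected singular set is overlooked, and that for each allowable gleam pattern the resulting knot is identified correctly. A secondary subtlety lies in separating hyperbolic survivors from non-hyperbolic ones—graph knots are excluded by Proposition \ref{prop:graph manifolds} (they would force $\bsc=0$), but Seifert fibered or otherwise reducible configurations still require explicit verification. The most computational portion will be matching each combinatorial-plus-gleam datum to a concrete knot diagram, where the catalogue of trivial-knot diagrams $U_1, U_2, U_3, U_4$ of Figure \ref{fig:unknot_diagrams} serves to parameterize how the external boundary of $P$ closes up inside $S^3$.
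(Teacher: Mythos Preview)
Your plan matches the paper's proof: the same combinatorial enumeration of the possible models for $\Nbd(S(P);P)$, pruned by Lemma~\ref{lem:non-separating torus} and simple-connectivity of $P$, with the knots then read off via the $U_i$ diagrams and the gleam/surgery dictionary. One refinement: your case~(B) is vacuous (a connected $S(P)$ with a single valence-$1$ boundary-vertex and no true vertex cannot close up into a compact graph), and the paper makes Lemma~\ref{lem:epimorphism between the fundamental groups}---forcing $\pi_1(P)=1$, hence that $P$ is obtained from $\Nbd(S(P);P)$ by capping some boundary circles by disks---the primary filter \emph{before} any gleam discussion, so you should foreground that step rather than treat the $S^3$ condition only as a late constraint on gleams.
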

\begin{remark}
{\rm
Using {\it Conway's normal form},  
the knots $K_1(m,n)$ and $K_2 (m,n)$ can be written as  
$C(2n, 2m)$ and $C(2n, 2m+1)$, respectively. 
Also, $K^{\pm}_4(m,n)$ is a knot obtained by  performing Dehn surgery on one component of 
$C(2n+1, 2m+1)$ with the coefficients $m + n \pm 1$. 
The knot $K_3 (m,n)$ is the {\it twisted torus knot} 
$T (3, 3m+2; 2, n)$, see 
Callahan-Dean-Weeks \cite{CDW99} for the definition.} 
\end{remark}
\begin{proof}
Suppose that $(S^3, K)$ admits a branched shadow $P$ such that 
$c(P) = 1$ $($so $| V (P) | =1$ and $| \mathit{BV} (P) | = 0)$ and $S(P)$ is connected. 
Then $S(P)$ is one of the two graphs shown in 
Figure \ref{fig:singularities_of_branched_shadows}. 
\begin{figure}[htbp]
\begin{center}
\includegraphics[width=7cm,clip]{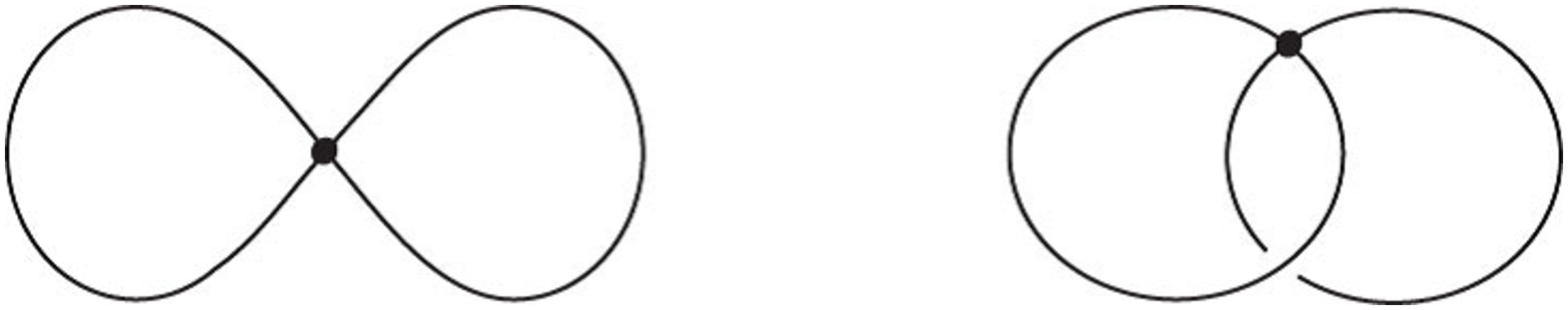}
\caption{Two types of $S(P)$.}
\label{fig:singularities_of_branched_shadows}
\end{center}
\end{figure}
Suppose that $S(P)$ is the one shown on the left-hand side in the figure. 
Then its neighborhood $\Nbd (S(P); P)$ in $P$ is isomorphic to 
one of the four models illustrated in Figure \ref{fig:vertex_1_case_1}. 
\begin{figure}[htbp]
\begin{center}
\includegraphics[width=14cm,clip]{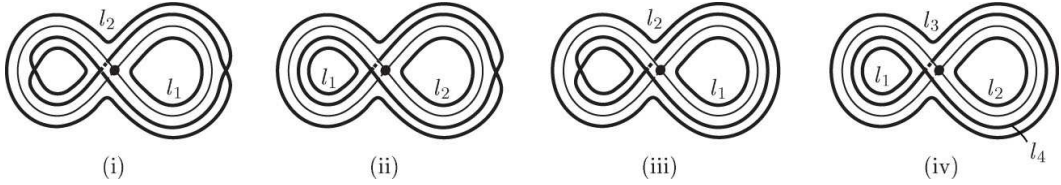}
\caption{Four possible shapes of $\Nbd(S(P);P)$.}
\label{fig:vertex_1_case_1}
\end{center}
\end{figure}
By Lemmas \ref{lem:epimorphism between the fundamental groups} and 
\ref{lem:non-separating torus}, each loop of the boundary of $\Nbd (S(P); P)$ 
separates $P$, and $P$ is a simply connected polyhedron obtained from $\Nbd (S(P); P)$ 
by capping off some of the boundary components by disks.  
Case (i) is impossible because we can not obtain a simply connected polyhedron 
no matter how we cap off boundary components of $\Nbd (S(P); P)$ by disks. 

In Case (ii), $P$ is simply connected if and only if $P$ is obtained from $\Nbd (S(P); P)$ by capping off 
the boundary components $l_1$ and $l_2$ by disks. 
We denote the two disk regions corresponding to $l_1$ and $l_2$ by $D_1$ and $D_2$, respectively. 
Then as a branched polyhedron $P$ is isomorphic to $P_{U_3}$, see the left-hand side in Figure \ref{fig:vertex_1_example_1}. 
\begin{figure}[htbp]
\begin{center}
\includegraphics[width=13cm,clip]{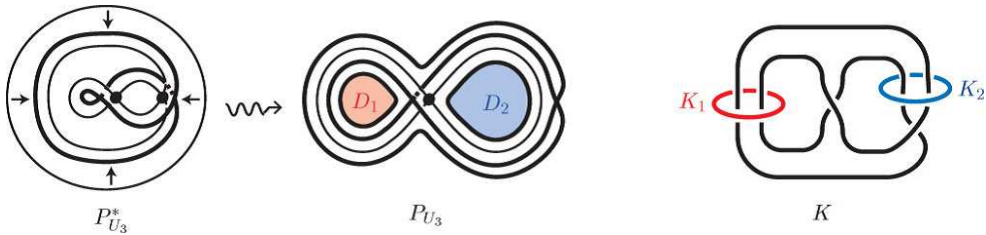}
\caption{The link in Case (ii) in Figure \ref{fig:vertex_1_case_1}.}
\label{fig:vertex_1_example_1}
\end{center}
\end{figure}
Note that in this case, the regions $D_1$ and $D_2$ are equipped with the gleams $1/2$ and $1$, respectively. 
Moreover, we note that for each $i \in \{ 1,2\}$, the preimage of an interior point of $D_i$ under the projection $\pi : S^3 \to P$ is a trivial loop 
$K_i$ illustrated on the right-hand side in Figure \ref{fig:vertex_1_example_1}.   
In general, the gleams of the regions $D_1$ and $D_2$ are $1/2 + m$ and $1 + n$, respectively, where $m$, $n$ are integers. 
Then the shadowed branched polyhedron $P$ is a branched shadow of a knot $K$ shown 
on the right-hand side in Figure \ref{fig:vertex_1_example_1}, 
where we perform the Dehn surgeries on $K_1$ and $K_2$ with the coefficients $1/m$ and $1/n$, respectively. 
Hence $K$ is the knot $K_2 (m,n)$ shown in Figure $\ref{fig:vertex_1_connected}$. 

In Case (iii) $P$ is simply connected if and only if $P$ is obtained from $\Nbd (S(P); P)$ by capping off 
the boundary components $l_1$ and $l_2$ by disks. 
Then as a branched polyhedron $P$ is isomorphic to $P_{U_4}$. 
In the same manner as in Case (ii), we can see that $K$ is the knot $K_3 (m,n)$ 
shown in Figure $\ref{fig:vertex_1_connected}$. 

In Case (iv), $P$ is simply connected if and only if $P$ is obtained from $\Nbd (S(P); P)$ 
in one of the following ways: 
\begin{enumerate}
\item
Cap off the boundary components $l_i$ and $l_j$ by disks, where 
$\{ i , j \}=  \{ 1,2 \}$, $\{ 1,3 \}$, $\{ 1, 4 \}$, $\{ 2 , 3 \}$, or $\{ 2,4 \}$. 
\item
Cap off the boundary components $l_i$, $l_j$  and $l_k$ by disks, where 
$\{ i , j , k\}$ = $\{ 1,2,3 \}$, $\{ 1, 2, 4 \}$, $\{ 1, 3, 4 \}$ or $\{ 2 , 3, 4 \}$.
\end{enumerate}
In Case (1), $P$ is isomorphic to $P^*_{U_1}$ or $P^*_{U_2}$ and hence it is easily seen that 
$K$ is one of the knots shown in Figure \ref{fig:vertex_1_example_2}. 
\begin{figure}[htbp]
\begin{center}
\includegraphics[width=12cm,clip]{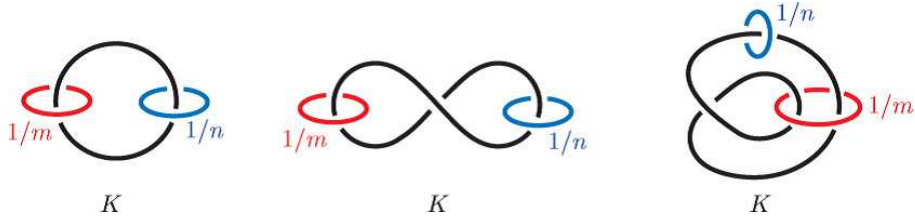}
\caption{The links in Case(iv)-(1).}
\label{fig:vertex_1_example_2}
\end{center}
\end{figure}
It follows that $K$ is the trivial knot or the $(2,m+1)$-torus knot, where $m$ is an integer. 
This contradicts the assumption that $K$ is hyperbolic. 
Next we consider Case (2). 
If $\{ i , j , k\}$ = $\{ 1,2,3 \}$, 
then $P$ is obtained by capping off the boundary component of $P^*_{U_1}$  
that does not correspond to the trivial knot $U_1$ by a disk.  
We denote the two disk regions of $P^*_{U_1}$ by $D_1$ and $D_2$. 
We note that the both gleams on $D_1$ and $D_2$ defined by the construction of $P^*_{U_1}$ are $+1/2$. 
The two boundary circles $K$ and $K'$ of $P^*_{U_1}$ together with 
the preimages $K_1$ and $K_2$ of interior points of $D_1$ and $D_2$, respectively, 
under the projection $\pi : S^3 \to P$ form  
the minimally twisted 4-chain link. 
The changing of the gleams on $D_1$ and $D_2$ to $1/2 + m$ and $1/2 + n$, respectively, where $m$, $n$ are integers, 
corresponds to Dehn surgeries on $K_1$ and $K_2$ with the coefficients $1/m$ and $1/n$, respectively. 
Moreover, the capping off of $K' \subset \partial P^*_{U_1}$ by a disk 
corresponds to a Dehn surgery on $K'$ with the coefficients $p$, where $p$ is an integer, 
see Figure \ref{fig:vertex_1_example_3}. 
\begin{figure}[htbp]	
\begin{center}
\includegraphics[width=13cm,clip]{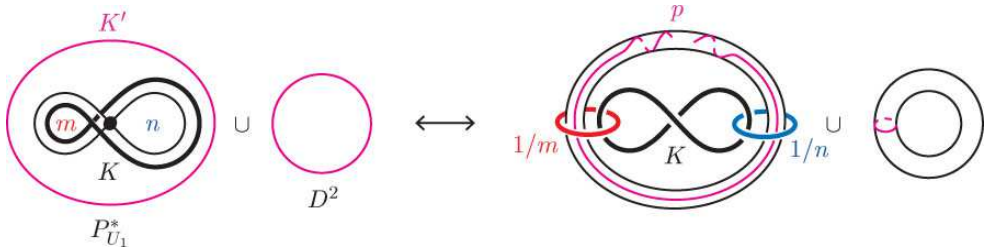}
\caption{The link in the case $\{i,j,k\}=\{1,2,3\}$ or $\{1,2,4\}$.}
\label{fig:vertex_1_example_3}
\end{center}
\end{figure}
Hence $P$ is a branched shadow of a knot in $L(p-m-n,1)$, thus $p=m+n\pm 1$ by assumption. 
In this case, $K$ is the knot $K_4^{\pm} (m,n)$ shown in Figure $\ref{fig:vertex_1_connected}$. 
The same arguments apply to the case $\{ i , j , k\}$ = $\{ 1,2,4 \}$. 
If  $\{ i , j , k\}$ = $\{ 1,3,4 \}$ or $\{ 2,3,4 \}$, 
then $P$ is obtained by capping off the boundary component of $P^*_{U_2}$ 
that does not correspond to the trivial knot $U_2$ by a disk.  
By a similar argument as above, $K$ is a knot described in Figure \ref{fig:vertex_1_example_4}, 
where $m$, $n$, $p$ are integers. 
\begin{figure}[htbp]	
\begin{center}
\includegraphics[width=3cm,clip]{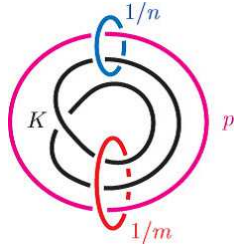}
\caption{The link in the case $\{i,j,k\}=\{1,3,4\}$ or $\{2,3,4\}$.}
\label{fig:vertex_1_example_4}
\end{center}
\end{figure}
Again, $P$ is a branched shadow of a knot in $L(p-m-n,1)$, thus $p=m+n\pm 1$ by assumption. 
In this case, $K$ is the knot $K_5^{\pm} (m,n)$ shown in Figure $\ref{fig:vertex_1_connected}$. 

Next, suppose that $S(P)$ is the one shown on the right-hand side in Figure \ref{fig:singularities_of_branched_shadows}. 
Then its neighborhood $\Nbd (S(P); P)$ in $P$ is homeomorphic to 
one of the four models illustrated in Figure \ref{fig:vertex_1_case_2}. 
\begin{figure}[htbp]
\begin{center}
\includegraphics[width=14cm,clip]{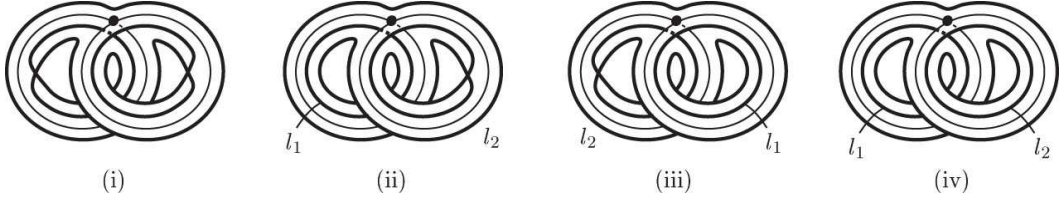}
\caption{The other four shapes of $\Nbd(S(P);P)$.}
\label{fig:vertex_1_case_2}
\end{center}
\end{figure}
None of Cases (i)-(iii) is possible because we can not obtain a simply connected polyhedron 
no matter how we cap off boundary components of $\Nbd (S(P); P)$ by disks. 

In Case (iv), $P$ is simply connected if and only if $P$ is obtained from $\Nbd (S(P); P)$ by capping off 
the boundary components $l_1$ and $l_2$ by disks. 
We denote the two disk regions corresponding to $l_1$ and $l_2$ by $D_1$ and $D_2$, respectively. 
We may embed $P$ into $S^3$ in such a way that $P$ is a spine of $S^3$, that is, 
$S^3$ collapses onto $P$ after removing a small open 3-ball. 
Then the 3-thickening of $P$ can be realized as a regular neighborhood $\Nbd(P; S^3)$, which is 
homeomorphic to a closed 3-ball. 
We first consider the case where the both gleams of $D_1$ and $D_2$ are 0. 
As observed in Costantino-Thurston \cite[Lemma~3.24]{CT08}, 
the 4-manifold reconstructed from $P$ is $\Nbd(P; S^3) \times [-1,1] \cong B^4$ and 
hence $P$ is a branched shadow of $S^3$. 
Moreover, the boundary of $B^4$, which is $S^3$, 
can be identified with the union of $\Nbd(P;S^3)$ and its mirror image
glued along their boundaries by the canonical identity.
The collapsing $B^4 \to P$ is obtained by combining two collapsings $\Nbd(P; S^3) \searrow P$ and $[0,1] \searrow \{ 0 \}$.  
Let $k_1$ and $k_2$ be the preimages of interior points of $D_1$ and $D_2$, respectively, 
under the collapsing $\Nbd(P; S^3) \searrow P$. 
Then on the boundary of $B^4$, the two copies of each $k_i$ give rise to a simple closed curve $K_i$ as described in 
Figure \ref{fig:vertex_1_case_2_torus}. 
\begin{figure}[htbp]
\begin{center}
\includegraphics[width=13cm,clip]{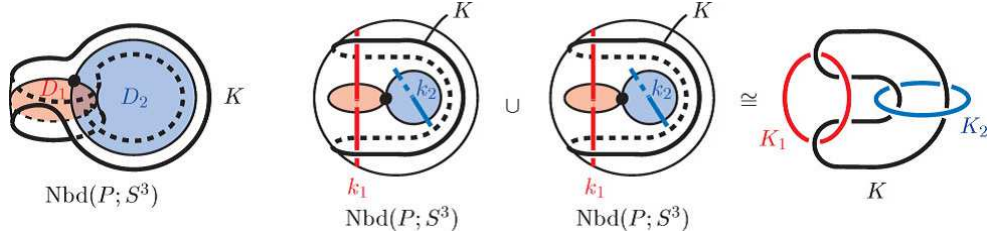}
\caption{The link in Case (iv) in Figure \ref{fig:vertex_1_case_2}.}
\label{fig:vertex_1_case_2_torus}
\end{center}
\end{figure}
In particular, we see that $K \sqcup K_1 \sqcup K_2$ is the Borromean link. 
In the case where the gleams of $D_1$ and $D_2$ are $m$ and $n$, respectively, where 
$m$, $n$ are integers, $K$ is the knot in $S^3$ obtained by the Dehn surgeries on 
$K_1$ and $K_2$ with the coefficients $1/m$ and $1/n$, respectively. 
Hence $K$ is the knot $K_1 (m,n)$ shown in Figure $\ref{fig:vertex_1_connected}$. 

Conversely, suppose that $K \subset S^3$ is a hyperbolic knot shown in Figure \ref{fig:vertex_1_connected}. 
By the above arguments, $(S^3 , K)$ admits a branched shadow $P$ such that $c(P) = 1$ and 
$S(P)$ is connected. 
This completes the proof. 
\end{proof}

We call the almost-special polyhedron of the shape depicted in 
Figure \ref{fig:tower} a {\it tower}. 
\begin{figure}[htbp]
\begin{center}
\includegraphics[height=3cm,clip]{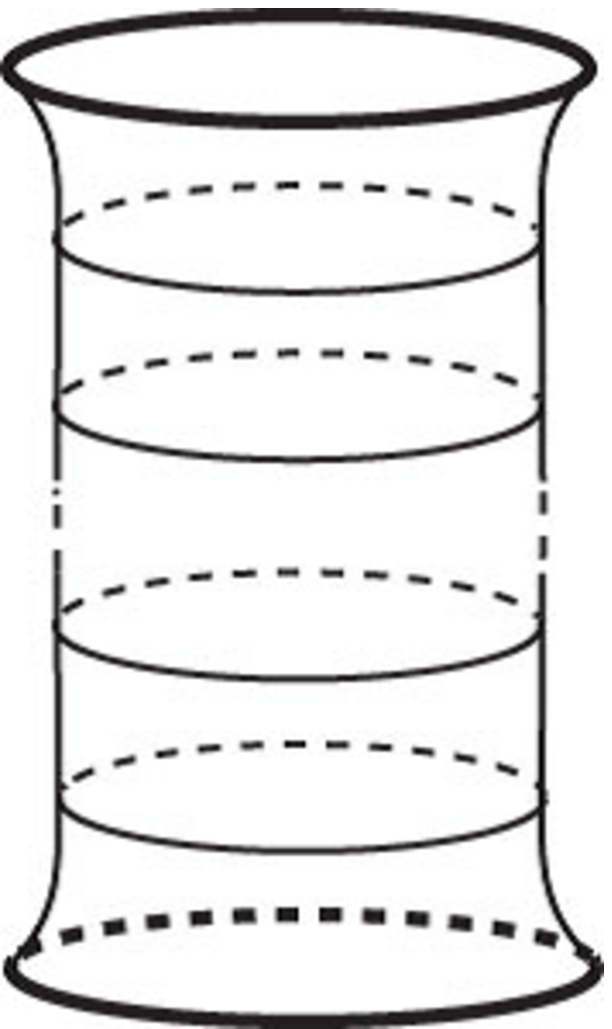}
\caption{A tower.}
\label{fig:tower}
\end{center}
\end{figure}
We require that a tower contains at least one disk region. 

\begin{lemma}
\label{lem:disks or annuli}
Let $L$ be a hyperbolic link in $S^3$ with $\bsc(S^3, L) = 1$, 
and let $P$ be a branched shadow of $(S^3,L)$ with $c(P)=1$.
Let $P'$ be a regular neighborhood of the component of $S(P)$ containing 
the vertex. 
Then there exists a branched shadow $P''$ of $(S^3, L)$ 
which is obtained from $P'$ by attaching towers to some of its boundary components.
\end{lemma}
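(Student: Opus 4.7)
The plan is to show that everything in $P$ outside of $P'$ can be simplified to a disjoint union of towers attached to boundary circles of $P'$, while preserving the pair $(S^3,L)$. Write $Q := \overline{P \setminus P'}$. Since the unique vertex of $P$ is contained in $P'$ and $P$ has no boundary vertices (because $|V(P)|+|\mathit{BV}(P)|=c(P)=1$), the polyhedron $Q$ has no true or boundary vertices. Hence every component of $S(Q)$ is a simple closed curve: either a boundary circle of $P'$, or a circular component of $S(P)\setminus V(P)$ interior to $Q$. Each connected component of $Q$ is therefore a union of orientable surface regions attached along these circles via $Y$-bundles whose identifications are dictated by the branching.

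The first step is to reduce every region of $Q$ to a disk. By Lemma \ref{lem:epimorphism between the fundamental groups}, the projection $\pi: E(L)\to P$ induces a surjection on fundamental groups, so any simple closed curve $l$ inside a region of $Q$ has as preimage a torus in $E(L)$. The same reasoning as in Lemma \ref{lem:non-separating torus}, combined with the atoroidality and irreducibility of the hyperbolic exterior $E(L)$, forces $l$ to separate $P$, and further guarantees that $l$ either bounds a compressing disk in $E(L)$ or cobounds an annulus with a component of $\partial E(L)$. In the first case we may cut along $l$ and cap off the new boundary by a disk with appropriate gleam; in the second case we reabsorb the piece into a neighborhood of a boundary component of $P$. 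Iterating the argument and adjusting gleams according to Turaev's reconstruction preserves $(S^3,L)$ and brings every region of $Q$ into a disk.

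The second step is to show that each component $Q_0$ of $Q$ meets $\partial P'$ in exactly one circle. If $Q_0$ meets $\partial P'$ in at least two circles, then combining the disk regions of $Q_0$ with the $Y$-bundle over its circular singular components yields a properly embedded essential annulus or torus in the part of $E(L)$ lying over $Q_0$; hyperbolicity of $L$ forbids both, so a compressing disk exists and can be used to split $Q_0$ into pieces each meeting $\partial P'$ in a single circle. After this splitting, every component of $Q$ is a stack of disk regions joined by circular components of $S(P)$ and attached to $P'$ along one boundary circle.

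Finally, compatibility with the branching — the requirement that the orientations induced on each singular circle from the adjacent disks disagree — forces each such stack to have the exact local shape of the tower depicted in Figure \ref{fig:tower}. Replacing each component of $Q$ by the corresponding tower with gleam chosen so that the reconstructed pair is still $(S^3,L)$ yields the desired branched shadow $P''$. The main obstacle is the topological rigidity claim in the second step: ruling out multi-circle attachments requires carefully lifting the $Y$-bundle structure of $Q_0$ through $\pi$ to extract essential surfaces in $E(L)$, and then invoking atoroidality and an-annularity of the hyperbolic exterior to compress them; keeping track of the branching and the gleam throughout these compressions is where the argument is most delicate.
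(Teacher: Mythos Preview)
Your approach attempts to simplify $Q=\overline{P\setminus P'}$ region by region, but several of the key steps are not justified, and the overall strategy misses the much more direct argument the paper uses.

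The paper does not try to simplify the regions of $Q$ at all. Instead, for each boundary circle $l$ of $P'$ it looks at the torus $T=\pi^{-1}(l)\subset S^3$ and uses the elementary fact that every embedded torus in $S^3$ bounds a solid torus on at least one side. Writing $P=P_1\cup_l P_2$ with $P_1\supset P'$, and $M_i=\pi^{-1}(P_i)$, one of $M_1,M_2$ is a solid torus. A short case analysis (is $T$ parallel to a component of $\partial\Nbd(L)$? is the solid torus knotted? on which side does $L$ sit?) together with hyperbolicity of $L$ shows that the piece $P_2$ can always be thrown away and replaced, in one stroke, by a tower representing a solid torus (possibly containing a component of $L$), and that the alternative would contradict either $\bsc(S^3,L)=1$ or hyperbolicity. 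There is no need to make regions of $Q$ into disks or to produce auxiliary annuli.

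Your argument, by contrast, has genuine gaps. In Step~1, ``cut along $l$ and cap off by a disk with appropriate gleam'' only preserves $(S^3,L)$ if the side you discard is the shadow of a solid torus disjoint from $L$; you never establish this, and in fact the solid torus bounded by $\pi^{-1}(l)$ might lie on the $P'$ side, in which case capping off the other side changes the ambient manifold. This is exactly what the paper's case analysis handles, and it sometimes requires a Fox re-embedding rather than a naive cap-off. In Step~2, you assert that a component $Q_0$ meeting $\partial P'$ in two circles yields an essential annulus or torus in $E(L)$, but you give no construction of such a surface from the $Y$-bundle data; this is the heart of the matter and cannot be left as a remark. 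In Step~3, the branching condition does not by itself force a linear ``tower'' shape: a vertex-free branched polyhedron with disk regions attached along a single boundary circle of $P'$ could a priori be a more complicated tree of disks, and you would still need a topological argument (again, which side is the solid torus) to prune it.

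In short, the missing idea is to exploit that tori in $S^3$ bound solid tori and replace each $P_2$ by a tower directly, rather than attempting an internal simplification of $Q$.
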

\begin{proof}
Let $l$ be a component of $\partial P'$. 
 By Lemma  \ref{lem:non-separating torus}, $l$ separates $P$ into two components 
 $P_1$ and $P_2$. 
 Without loss of generality we can assume that 
$P_1$ is the component containing the vertex. 
It suffices to show that we can replace $P_2$ with a tower $Q$ so that the resulting 
shadowed polyhedron $P_1 \cup_l Q$ is again a branched shadow of $(S^3, L)$. 
Let $\pi: S^3 \to P$ be a projection as in Lemma \ref{lem:epimorphism between the fundamental groups}. 
We set $M_i = \pi^{-1} (P_i)$ ($i=1, 2$) and $T = \pi^{-1} (l)$. 
Then at least one of $M_1$ and $M_2$ is a solid torus. 

Suppose that $T$ is parallel to $\partial \Nbd(K)$ in $E(L)$, where $K$ is a component of $L$. 
If $K$ is in $M_1$ then $M_1 \cap L = K$, and hence $P_2$ is a branched shadow of $E(L)$. 
Since $P_2$ has no vertices, it follows from 
Lemma \ref{lem:branched shadow complexities of a link and its exterior} and 
Proposition \ref{prop:graph manifolds} that $E(L)$ is a graph manifold, 
which contradicts the hyperbolicity of $L$. 
If $K$ is in $M_2$ then $M_2 \cap L = K$, hence by replacing $P_2$ with a tower,
we have a branched shadow of $(S^3, L)$. 

Now we assume that $T$ is not parallel to any component of $\partial \Nbd (L)$.
 Suppose that $M_i$ is a solid torus, where $i$ is either $1$ or $2$.

First we consider the case where the solid torus $M_i$ is knotted in $S^3$, 
that is, $M_{3-i}$ is not a solid torus. 
If $M_i \cap L$ is empty, 
we can attach a tower $Q$ to $P_{3 - i}$ along $l$ so that 
the resulting polyhedron $P_{3 - i} \cup_l Q$ is again a branched shadow of $(S^3, L)$ after giving suitable gleams 
to its regions. 
If $i=1$, then $\bsc (S^3, L) = 0$, which contradicts the hyperbolicity of $L$. 
Hence we have $i=2$, which is our assertion. 
Suppose that $M_i\cap L$ is not empty.  
If $M_i\cap L$ is not contained in a $3$-ball in $M_i$, then 
$T$ is an essential torus in $E(L)$. 
By Thurston's Hyperbolization Theorem \cite{Thu82, Mor84, Ota96, Ota98, Kap01} 
this implies $L$ is not hyperbolic,
 which is a contradiction.
If $M_i\cap L$ is contained in a $3$-ball in $M_i$, 
then either $L$ is a split link or $M_{3-i}\cap L=\emptyset$.
The former case contradicts the hyperbolicity of $L$.
In the latter case, 
we can attach a tower $Q$ to $P_i$ along $l$ so that 
the resulting polyhedron $P_i \cup_l Q$ is again a branched shadow of $(S^3, L)$ after giving suitable gleams 
to its regions. 
Note that the operation replacing $P_{3-i}$ by $Q$ corresponds to a Fox re-embedding of $M_i$, and under this re-embedding 
the knot type of $L$ does not change since $L$ is contained in a 3-ball in $M_i$. 
If $i=2$, then $\bsc (L) = 0$, which contradicts the hyperbolicity of $L$.
Hence we have $i=1$, which is our assertion.

Next we consider the case where $M_i$ is unknotted, that is, $M_{3-i}$ is also a solid torus.
Since $L$ is hyperbolic and $T$ is not parallel to any component of $\partial \Nbd (L)$,
 we have either $L\subset M_1$ or $L\subset M_2$, otherwise $T$ is an essential torus in $E(L)$ or $L$ is a split link.
If $L\subset M_2$ then, attaching a tower to $P_2$, we obtain a branched shadow
without vertices. 
Hence we have $L\subset M_1$. 
Then attaching a tower to $P_1$, we obtain a branched shadow of $(S^3, L)$, which is our assertion.
\end{proof}


\begin{theorem}
\label{thm:vertex 1 general}
Let $L$ be a hyperbolic link in $S^3$.
Then $\bsc(S^3, L) = 1$ if and only if the exterior of $L$ is diffeomorphic to
a $3$-manifold obtained by Dehn filling the exterior of one of the 
six links $L_1$, $L_2, \ldots, L_6$ in $S^3$ along some of 
$($possibly none of$)$ boundary tori  of its exterior, 
where $L_1$, $L_2, \ldots, L_6$ are 
illustrated in Figure $\ref{fig:vertex_1_general}$.  
\end{theorem}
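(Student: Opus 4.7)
The plan is to combine Lemma \ref{lem:disks or annuli} with a case analysis analogous to the one in the proof of Theorem \ref{thm:vertex 1 connected}. For the ``if'' direction, suppose $E(L)$ is obtained by Dehn filling $E(L_i)$ along some (possibly empty) subset of its boundary tori. Each diagram in Figure \ref{fig:vertex_1_general} directly exhibits a branched shadow of $(S^3, L_i)$ with a single true vertex (verifiable by the same kind of combinatorial check used in Theorem \ref{thm:vertex 1 connected}), so $\bsc(S^3, L_i) \leqslant 1$. Hyperbolicity of $L_i$ together with Proposition \ref{prop:graph manifolds} provides the matching lower bound, giving $\bsc(S^3, L_i) = 1$. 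Then monotonicity of $\bsc$ under Dehn filling --- an immediate consequence of Corollary \ref{cor:stable map complexities of a link and its exterior} and Theorem \ref{thm:branched shadow complexity and crossing singulatities} --- implies $\bsc(S^3, L) \leqslant 1$, while hyperbolicity of $L$ combined with Proposition \ref{prop:graph manifolds} forces equality.

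For the ``only if'' direction, let $P$ be a minimal branched shadow of $(S^3, L)$. By Lemma \ref{lem:disks or annuli}, we may assume $P = P' \cup T_1 \cup \cdots \cup T_k$, where $P'$ is a regular neighborhood of the connected component $S_0(P)$ of $S(P)$ containing the unique true vertex and each $T_j$ is a tower attached to a boundary circle of $P'$. The key observation is that each tower $T_j$, equipped with its gleams, is itself a branched shadow of an unknotted solid torus in $S^3$; therefore, removing $T_j$ and relabeling its attaching circle as ``$i$''-type produces a branched shadow $\tilde P = P'$ (with certain boundary circles now ``$i$''-labeled) of a pair $(S^3, \tilde L)$ with $L \subset \tilde L$, such that $E(L)$ is recovered from $E(\tilde L)$ by Dehn filling those boundary tori of $\partial E(\tilde L)$ that correspond to $\tilde L \setminus L$.

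It therefore suffices to enumerate, up to isomorphism of branched polyhedra, the possible $P'$ arising as branched shadows of hyperbolic link exteriors in $S^3$. Since $P'$ has exactly one true vertex, $S(P')$ is one of the two graphs of Figure \ref{fig:singularities_of_branched_shadows} and $\Nbd(S(P'); P')$ is one of the combinatorial shapes of Figures \ref{fig:vertex_1_case_1} and \ref{fig:vertex_1_case_2}. Rerunning the case analysis of Theorem \ref{thm:vertex 1 connected}, but now leaving boundary circles ``$i$''-labeled rather than capping them off by disks, exactly six combinatorial types survive; the remaining candidates are excluded because the associated $\tilde L$ is either a split link or has a graph-link exterior, neither of which is compatible with any hyperbolic Dehn-filling child $L$.

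The main obstacle is the final identification step: for each of the six surviving $P'$ one must carry out Turaev's reconstruction to produce an explicit link in $S^3$ and match it with the specific $L_i$ of Figure \ref{fig:vertex_1_general}. This requires careful bookkeeping of the branching, of the gleams of the disk regions of $P'$ (which after ``$i$''-relabeling are only determined modulo Dehn-filling ambiguity), and of the cyclic ordering of edges of $S(P')$ at the unique true vertex. The diagrammatic recipe of Section \ref{sec:Stable maps of links} provides a systematic way to draw each resulting link from its polyhedron, and the output of this procedure in the six cases yields precisely the links $L_1, L_2, \ldots, L_6$ depicted in Figure \ref{fig:vertex_1_general}.
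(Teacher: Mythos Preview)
Your overall strategy coincides with the paper's: apply Lemma~\ref{lem:disks or annuli} to reduce to the neighbourhood $P'$ of the vertex, enumerate the eight local models of Figures~\ref{fig:vertex_1_case_1} and~\ref{fig:vertex_1_case_2}, and identify the survivors with $L_1,\ldots,L_6$. Two of your steps, however, are genuine gaps rather than routine verifications.

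First, your exclusion criterion is not the right one. You claim the discarded models are ruled out because the associated $\tilde L$ is split or a graph link. In the paper the exclusion of Figure~\ref{fig:vertex_1_case_2}\,(i) is purely combinatorial: by Lemma~\ref{lem:epimorphism between the fundamental groups} any shadow of $(S^3,L)$ is simply connected, and a Seifert--van Kampen check shows that no attachment of towers to this particular $P'$ can yield a simply connected polyhedron. There is no link $\tilde L$ in $S^3$ to inspect in this case --- the ambient manifold fails to be $S^3$ before any link appears. The further drop from seven surviving models to six is the observation that Figures~\ref{fig:vertex_1_case_2}\,(ii) and~(iii) are related by a reflection, again not a link-theoretic argument.

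Second, and more seriously, the identification step does not reduce to the recipe of Section~\ref{sec:Stable maps of links}. That recipe produces a link from a polyhedron only when the polyhedron already has the form $P_{D_L}$ (or $P^*_{D_L}$) coming from an admissible diagram. Four of the six models do arise this way, and the paper handles them by citing the constructions in Theorem~\ref{thm:vertex 1 connected} to obtain $L_3,L_4,L_5,L_6$. But the models of Figure~\ref{fig:vertex_1_case_1}\,(i) and Figure~\ref{fig:vertex_1_case_2}\,(ii) do not fit that template, and here the paper does real work: for the first it recognises the capped-off polyhedron as the \emph{abalone} spine of $S^3$, realises the $4$-thickening as $\Nbd(P_0;S^3)\times[-1,1]$, and tracks the preimages of interior points of the two disk regions to exhibit $L_1$ explicitly; for the second it decomposes $P'$ into two pieces, identifies the corresponding $3$-manifolds with boundary, glues them along a pair of pants, and then performs a concrete Dehn-surgery calculation to land in $S^3$ and read off $L_2$. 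These two constructions are the substantive content of the proof of Theorem~\ref{thm:vertex 1 general} beyond Theorem~\ref{thm:vertex 1 connected}, and your appeal to Section~\ref{sec:Stable maps of links} does not supply them.
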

\begin{proof}
Let $P$ be a branched shadow of $(S^3 , L)$ with one vertex. 
By Lemma \ref{lem:epimorphism between the fundamental groups}, 
$P$ is simply connected. 
Let $P'$ be a regular neighborhood of the component of $S(P)$ containing the vertex. 
Then $P'$ is isomorphic to one of the eight models illustrated in 
Figures \ref{fig:vertex_1_case_1} and 
\ref{fig:vertex_1_case_2}.  
By Lemma \ref{lem:disks or annuli}, 
we may assume that $P$ is obtained from $P'$ 
by attaching towers to some of its boundary components. 
It follows from Seifert-van Kampen Theorem that 
$P$ can be simply connected if and only if 
$P'$ is isomorphic to one of the eight models except 
Figure \ref{fig:vertex_1_case_2} (i). 
Thus it suffices to find a hyperbolic link $L$ in $S^3$ 
for each of the remaining seven models such that the shadow $P'$ equipped with
``$e$" on each of the components of $\partial P'$ 
is a branched shadow of the exterior $E(L)$. 
By Theorem \ref{thm:vertex 1 connected},  these links for 
Figure \ref{fig:vertex_1_case_1} (ii), (iii), (iv) and Figure \ref{fig:vertex_1_case_2} (iv) 
are already obtained as follows: 
(Note that the links $L_3$, $L_4$, $L_6$ are those used to define the knots 
$K_1(m,n)$, $K_2(m,n)$, $K_4^\pm(m,n)$, respectively, 
though each of them is depicted in a different way.)  
\begin{enumerate}
\item
If $P'$ is isomorphic to the model shown in Figure \ref{fig:vertex_1_case_1} (ii), then $P'$ is a branched shadow 
of the exterior of $L_4$. 
\item
If $P'$ is isomorphic to the model shown in Figure \ref{fig:vertex_1_case_1} (iii), then $P'$ is a branched shadow 
of the exterior of $L_5$. 
\item
If $P'$ is isomorphic to the model shown in Figure \ref{fig:vertex_1_case_1} (iv), then $P'$ is a branched shadow 
of the exterior of $L_6$. 
\item
If $P'$ is isomorphic to the model shown in Figure \ref{fig:vertex_1_case_2} (iv), then $P'$ is a branched shadow 
of the exterior of $L_3$. 
\end{enumerate}

Suppose that $P'$ is isomorphic to the model shown in Figure \ref{fig:vertex_1_case_1} (i). 
Then $P$ is obtained from $P'$   
by attaching towers to each of the boundary components $l_1$, $l_2$ of 
$P'$, otherwise $P$ will not be simply-connected. 
Note that, in this case, $S(P)$ is not connected. 
Let $P_0$ be an almost-special polyhedron obtained by capping off  $l_1$ and $l_2$ by disks, where 
$l_1$ and $l_2$ are the boundary components of $\Nbd(S(P) ; P)$ as shown in Figure \ref{fig:vertex_1_case_1} (i).  
Then as a polyhedron $P_0$ is a spine of $S^3$ that is called the {\it abalone} shown in Figure \ref{fig:abalone}.  
See, for instance, Ikeda \cite{Ike71} and Matveev \cite{Mat03} for details of this spine.  
\begin{figure}[htbp]
\begin{center}
\includegraphics[width=10cm,clip]{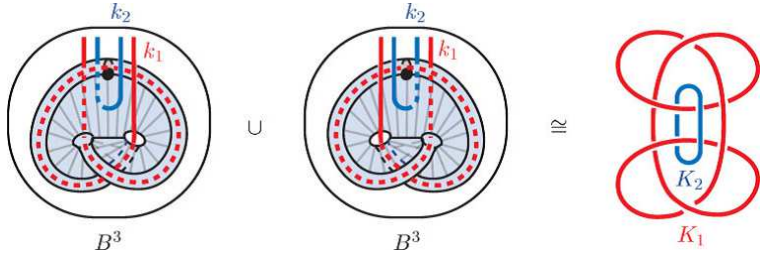}
\caption{The link in Case (i) in Figure \ref{fig:vertex_1_case_1}.}
\label{fig:abalone}
\end{center}
\end{figure}
We denote the two disk regions corresponding to $l_1$ and $l_2$ by $D_1$ and $D_2$, respectively. 
If we assume that both gleams of $D_1$ and $D_2$ are 0, 
then the 3-thickening of $P_0$ can be realized as a regular neighborhood $\Nbd(P_0; S^3)$, which is 
homeomorphic to a closed 3-ball. 
Then as in the case of Figure \ref{fig:vertex_1_case_2} (iv) of Theorem \ref{thm:vertex 1 connected}, 
the 4-manifold reconstructed from $P_0$ is $\Nbd(P_0; S^3) \times [-1,1] \cong B^4$ and 
hence $P_0$ is a branched shadow of $S^3$. 
Moreover, the boundary of $B^4$ can be identified with the union of $\Nbd(P_0;S^3)$ and its mirror image
glued along their boundaries by the canonical identity. 
The collapsing $B^4 \searrow P$ is obtained by combining two collapsings $\Nbd(P_0; S^3) \searrow P_0$ and $[0,1] \searrow \{ 0 \}$.  
Let $k_1$ and $k_2$ be the preimages of interior points of $D_1$ and $D_2$, respectively, under the collapsing $\Nbd(P_0; S^3) \searrow P_0$. 
Then on the boundary of $B^4$, the two copies of each $k_i$ give rise to a simple closed curve $K_i$ as described in 
Figure \ref{fig:abalone}. 
Then the link $K_1 \cup K_2 \subset S^3$ is $L_1$. 
This implies that 
$P'$ equipped with the color ``$e$" on each of the component of $\partial P'$  
is a branched shadow of the exterior $E(L_1)$.

We remark that Figure \ref{fig:vertex_1_case_2} (ii) and (iii) are isomorphic by the reflection. 
Suppose that $P'$ is isomorphic to the model shown in Figure \ref{fig:vertex_1_case_2} (ii). 
Then $P$ is obtained from $P'$ 
by attaching a tower to each of the boundary components $l_1$, $l_2$ of 
$P'$, otherwise $P$ will not be simply-connected. 
Again in this case, $S(P)$ is not connected. 
Let $v$ be the vertex of $P'$. 
Let $\Nbd(v; P) \subset P'$ be a regular neighborhood such that 
$P' \setminus \Nbd(v; P)$ consists of two components $P_1$ and $P_2$, 
where each of $P_1$ and $P_2$ is isomorphic to the direct product of a Y-shaped graph and an interval. 
We assume that $P_1 \cap l_1 \neq \emptyset$ (so $P_2 \cap l_1 = \emptyset$). 
Then $\Nbd(v; P) \cup P_1$ is a shadow of the 3-manifold $M_1$ shown in 
Figure \ref{fig:0-surgery_diagram} while 
$P_2$ is a shadow of the direct product $M_2$ of 
a pair of pants and an interval shown in the same figure. 
\begin{figure}[htbp]
\begin{center}
\includegraphics[width=13cm,clip]{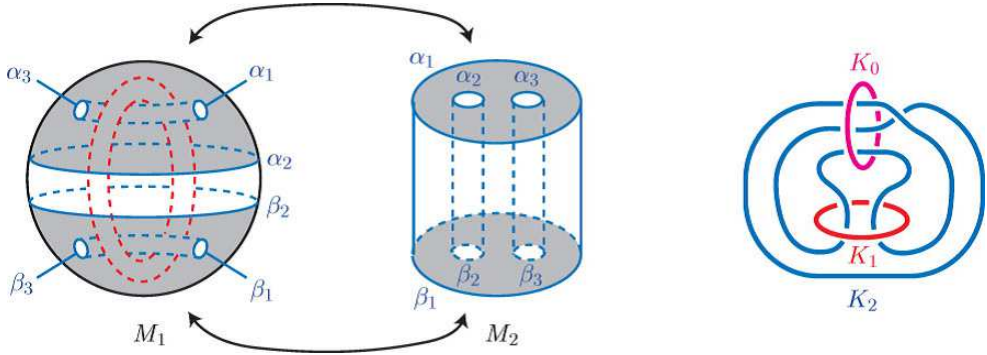}
\caption{The link in Cases (ii) and (iii) in Figure \ref{fig:vertex_1_case_2}.}
\label{fig:0-surgery_diagram}
\end{center}
\end{figure}
We may glue $M_1$ and $M_2$ along the pair of pants as indicated in the figure to get a 3-manifold 
$M$ presented by the branched shadow $P'$. 
Let $K_0 \cup K_1 \cup K_2$ be the link shown 
on the right-hand side in Figure \ref{fig:0-surgery_diagram}. 
It is easy to see that $M$ 
is the exterior of $K_1 \cup K_2$ after performing Dehn surgery along $K_0$ with the coefficient 0. 
We want to describe $M$ as the exterior of a link in $S^3$. 
To do so, we perform Dehn surgery along $K_2$ with the coefficient $1$. 
Let $K_2^*$ be the core of the solid torus corresponding to $K_2$ in the surgered manifold. 
Since $K_0 \cup K_2$ is the Hopf link, the surgered manifold is $S^3$ and 
$K_1 \cup K_2^*$ is a link whose exterior is $M$. 
By an elementary argument of Dehn surgery, we may check that the link $K_1 \cup K_2^*$ 
is $L_2$. 

Conversely, suppose that $L \subset S^3$ is a hyperbolic link whose exterior 
is diffeomorphic to
a $3$-manifold obtained by Dehn filling the exterior of one of the 
six links $L_1$, $L_2, \ldots, L_6$ in $S^3$ along some of 
$($possibly none of$)$ boundary tori  of its exterior. 
By Proposition \ref{prop:graph manifolds}, $\bsc(S^3, L) \geqslant 1$. 
Moreover, by the above arguments, $(S^3 , L)$ admits a branched shadow $P$ with $c(P) = 1$. 
This completes the proof. 
\end{proof}
\begin{remark}
{\rm
In Case $(3)$ of the above proof, 
we can also use the $4$-component link depicted in Figure $\ref{fig:vertex_1_example_4}$ instead of $L_6$. 
Actually, this link and $L_6$ have the diffeomorphic complements.} 
\end{remark}

From Theorems \ref{thm:branched shadow complexity and crossing singulatities} and 
\ref{thm:vertex 1 general}, the following holds: 

\begin{corollary}
\label{cor:Existence of an S-map with smc 1} 
Let $L$ be a hyperbolic link in $S^3$.
Then there exists a stable map $f : (S^3, L) \to \Real^2$ without cusp points 
such that $| \mathrm{II}^2 (f) | = 1$ and $\mathrm{II}^3 (f) = \emptyset$ 
if and only if 
the exterior of $L$ is diffeomorphic to
a $3$-manifold obtained by Dehn filling the exterior of one of the 
six links $L_1$, $L_2, \ldots, L_6$ in Theorem $\ref{thm:vertex 1 general}$ 
along some of $($possibly none of$)$ boundary tori  of its exterior. 
\end{corollary}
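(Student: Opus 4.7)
The plan is to deduce this corollary by combining Theorem \ref{thm:branched shadow complexity and crossing singulatities} (equality $\bsc = \smc$), Theorem \ref{thm:vertex 1 general} (characterization of hyperbolic links with $\bsc(S^3,L)=1$), Corollary \ref{cor:existence of S-maps without CS2} (realization of $\smc$ by an S-map with $\mathrm{II}^3=\emptyset$), and Proposition \ref{prop:graph manifolds} (the complexity $0$ case). There is essentially no new geometric construction required; the work consists in matching the numerical invariants of stable maps with the complexities of branched shadows.

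For the only-if direction, suppose there is a stable map $f:(S^3,L)\to\Real^2$ without cusp points satisfying $|\mathrm{II}^2(f)|=1$ and $\mathrm{II}^3(f)=\emptyset$. Then $c(f)=|\mathrm{II}^2(f)|+2|\mathrm{II}^3(f)|=1$, so $\smc(S^3,L)\leqslant 1$. Since $L$ is hyperbolic, $E(L)$ is not a graph manifold, so by Proposition \ref{prop:graph manifolds} we have $\smc(S^3,L)\neq 0$. Hence $\smc(S^3,L)=1$, and then Theorem \ref{thm:branched shadow complexity and crossing singulatities} gives $\bsc(S^3,L)=1$. Theorem \ref{thm:vertex 1 general} now implies that $E(L)$ is diffeomorphic to a Dehn filling of $E(L_i)$ for some $i\in\{1,\ldots,6\}$ along some of (possibly none of) its boundary tori, as required.

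For the if direction, suppose $E(L)$ is obtained by Dehn filling $E(L_i)$ along some of its boundary tori. Theorem \ref{thm:vertex 1 general} yields $\bsc(S^3,L)=1$, and Theorem \ref{thm:branched shadow complexity and crossing singulatities} then gives $\smc(S^3,L)=1$. By Corollary \ref{cor:existence of S-maps without CS2} applied to $(S^3,L)$, there exists an S-map $f:(S^3,L)\to\Real^2$ without cusp points such that $|\mathrm{II}^2(f)|=\smc(S^3,L)=1$ and $\mathrm{II}^3(f)=\emptyset$. Since the source manifold is closed, this S-map is in fact a stable map of $(S^3,L)$, which is the desired $f$.

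The proof is thus a short assembly of earlier results. The only subtlety to check carefully is the hyperbolicity hypothesis in the only-if direction: without it one could only conclude $\smc(S^3,L)\leqslant 1$, which does not force $\bsc(S^3,L)=1$, so the appeal to Proposition \ref{prop:graph manifolds} to rule out $\smc(S^3,L)=0$ is essential and is the only non-cosmetic step in the argument.
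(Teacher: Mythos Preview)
Your proof is correct and follows the same approach as the paper, which simply records that the corollary follows from Theorems \ref{thm:branched shadow complexity and crossing singulatities} and \ref{thm:vertex 1 general}. You have merely unpacked the argument in more detail, making explicit the appeals to Corollary \ref{cor:existence of S-maps without CS2} and Proposition \ref{prop:graph manifolds} that are implicit in the paper's one-line derivation.
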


Since the correspondence between S-maps and branched shadows 
are so explicit, we can completely determine the configuration
of the singular fibers of the S-maps from the branched shadows.
In the rest of this section, we show the configuration of singular fibers of
type $\mathrm{II}^2$ 
for the links $L_3$, $L_4$, $L_5$ and $L_6$ in 
Theorem \ref{thm:vertex 1 general}.
We omit the other links $L_1$ and $L_2$ since the configuration
seems to be more complicated.
\begin{corollary}
\label{cor:configurations of the unique singular fibers}
For $i \in \{ 3,4,5, 6 \}$,  the exterior of the link $L_i$ admits an S-map $f : E(L_i) \to \Real ^2$ 
with $c(f) = 1$ such that 
the configuration of the unique singular fiber of type $\mathrm{II}^2$ 
is shown in Figure $\ref{fig:vertex_1_general_singular_fibers}$. 
\begin{figure}[htbp]
\begin{center}
\includegraphics[width=14cm,clip]{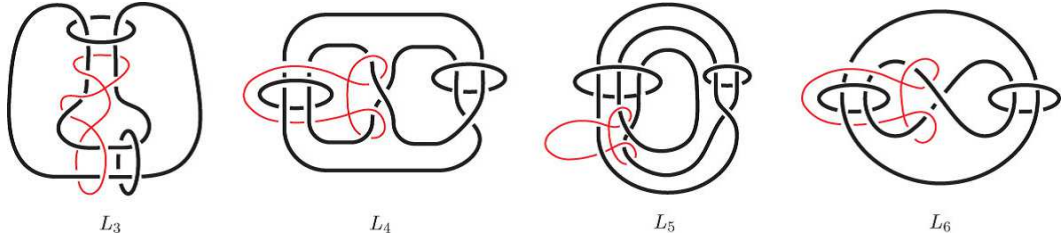}
\caption{The configuration of singular fibers in Corollary \ref{cor:configurations of the unique singular fibers}.}
\label{fig:vertex_1_general_singular_fibers}
\end{center}
\end{figure}
\end{corollary}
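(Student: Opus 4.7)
The plan is to apply the explicit correspondence between branched shadows and S-maps established in Theorem \ref{thm:branched shadow complexity and crossing singulatities} (and exploited in Section \ref{sec:Stable maps of links}) to each of the minimal branched shadows $P'$ of $E(L_i)$ identified in the proof of Theorem \ref{thm:vertex 1 general}. Recall that for $i \in \{3,4,5,6\}$ the polyhedron $P'$ is the regular neighborhood of the component of $S(P)$ containing the unique true vertex $v$, with every boundary circle colored ``$e$''; these correspond to the four models of $\Nbd(S(P);P)$ shown in Figures \ref{fig:vertex_1_case_1}(ii), (iii), (iv) and \ref{fig:vertex_1_case_2}(iv) respectively.

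First, for each $i$ I would run the construction in the proof of Theorem \ref{thm:branched shadow complexity and crossing singulatities} that assigns an S-map $f : E(L_i) \to \Real^2$ to the branched shadow $P'$. Because $P'$ has a unique true vertex and no boundary-vertices, the resulting S-map satisfies $|\mathrm{II}^2(f)| = 1$, $\mathrm{II}^3(f) = \emptyset$, so $c(f)=1$. Then, applying Remark \ref{rmk:branched shadow complexity and crossing singulatities}(1), I may furthermore require that the unique non-simple crossing is of type $\mathrm{II}^2$, which is precisely the local model described in Figure \ref{fig:the stein factorization for II3}.

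Next, to determine the configuration of the unique singular fiber, I would use the same local analysis as in Case (5) of Section \ref{sec:Stable maps of links}: pick a transverse arc $\alpha$ to $\bar f$ at $\bar f(v)$; the preimage $X = \bar f^{-1}(\alpha)$ is an $X$-shaped graph, and $q_f^{-1}(X)$ is a disk with three holes whose four boundary circles are the preimages of the four endpoints of $X$. The singular fiber of type $\mathrm{II}^2$ is then a spine of this disk with three holes sitting inside $E(L_i)$, so I only need to identify how each of these four boundary circles sits with respect to $L_i$. By Cases (1)--(3) of Section \ref{sec:Stable maps of links}, each boundary circle is either a regular fiber over an interior region of $P'$ (and hence a meridian of the corresponding preimage solid torus) or, when the region is capped off by a tower to recover $(S^3,L_i)$, a fiber inside that attached piece. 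For each of the four models giving $L_3,L_4,L_5,L_6$ I would explicitly read off, from the branched local model at $v$ together with the identification of the link components carried out in the proof of Theorem \ref{thm:vertex 1 general}, which boundary circles become meridians of which components of $L_i$, thereby producing the configurations pictured in Figure \ref{fig:vertex_1_general_singular_fibers}.

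The main obstacle is purely combinatorial-topological bookkeeping: matching the framing and orientation data of the four boundary circles of the disk with three holes (as they sit in $E(L_i)$) with the diagrams of $L_3,\ldots,L_6$ as drawn in Figure \ref{fig:vertex_1_general}. For $L_3$, coming from Figure \ref{fig:vertex_1_case_2}(iv), the analysis is the most delicate since the underlying spine is the Borromean-type branched polyhedron and all three extra circles are linked nontrivially with $L_3$; for the remaining three cases the corresponding verification reduces to tracing the preimages $k_1,k_2,\ldots$ of interior points of the disk regions along the collapsing used in the proof of Theorem \ref{thm:vertex 1 general}, which is routine once the correct identification of each $P'$ with $(S^3,L_i)$ is in hand.
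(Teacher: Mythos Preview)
Your overall strategy coincides with the paper's: construct the S-map from the one-vertex branched shadow $P'$ via Theorem \ref{thm:branched shadow complexity and crossing singulatities}, and then read off the configuration of the unique $\mathrm{II}^2$ fiber by identifying the four boundary circles of the disk with three holes $q_f^{-1}(X)$ inside $E(L_i)$. For $L_4$, $L_5$, $L_6$ this is exactly what the paper does; their shadows are $P_{U_3}$, $P_{U_4}$, $P^*_{U_1}$, built directly from the diagrams $U_3$, $U_4$, $U_1$ by the Section~\ref{sec:Stable maps of links} construction, so Cases (1)--(5) there apply verbatim and the bookkeeping really is routine.

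For $L_3$, however, you underestimate the difficulty by calling it ``combinatorial-topological bookkeeping.'' The branched shadow in Figure~\ref{fig:vertex_1_case_2}(iv) is \emph{not} of the form $P_{D_L}$ for any link diagram, so the Section~\ref{sec:Stable maps of links} recipe does not directly apply. The paper's proof first identifies the preimages $q_f^{-1}(z_i)$ of the four points of $\partial\Nbd(v;P')\cap S(P')$ as figure-eight curves on four pairs of pants, and these cut $E(L_3)$ into a genus-$3$ handlebody $N_0$ and two genus-$2$ handlebodies. The key point you are missing is that $N_0 = q_f^{-1}(\Nbd(v;P'))$ carries \emph{two} product structures as (disk with three holes)$\times[0,1]$: the standard one coming from the local model of a $\mathrm{II}^2$ fiber (Figure~\ref{fig:the stein factorization for II3}), and the one inherited from the shadow identification of $q_f^{-1}(Y_i)$ in $E(L_3)$. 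These two product structures do \emph{not} agree; they differ by an explicit orientation-preserving self-diffeomorphism of $N_0$ that switches two of the handles (Figure~\ref{fig:L_3_singular_fiber_2}). Only after transporting the standard spine of Figure~\ref{fig:the stein factorization for II3} through this diffeomorphism do you obtain the correct configuration of $q_f^{-1}(v)$ inside $E(L_3)$. Without this step your proposal would place the $\mathrm{II}^2$ fiber in the wrong isotopy class.
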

\begin{proof}
Since the links $L_4$, $L_5$, $L_6$ admit branched shadows 
$P_{U_3}$, $P_{U_4}$, $P^*_{U_1}$, respectively, 
we get an S-map $f : E(L_i) \to \Real^2$ ($i \in \{ 4,5,6 \}$) 
with $c(f) = 1$ using these branched shadows and then we can find easily 
the configuration of its unique singular fiber of type $\mathrm{II}^2$ 
as shown in Figure \ref{fig:vertex_1_general_singular_fibers}, 
following the method introduced in Section \ref{sec:Stable maps of links}. 
Hence it remains to show the case of $L_3$. 
We recall that the exterior of $L_3$ admits a branched shadow $P'$ depicted in Figure \ref{fig:vertex_1_case_2} (iv). 
We use the same notation as in the proof of Theorem \ref{thm:vertex 1 general}, 
that is, $l_1$ and $l_2$ are the components of $\partial P'$ shown in Figure \ref{fig:vertex_1_case_2} (iv) and 
$L_3$ consists of 3 components $K$, $K_1$, $K_2$ as 
shown in Figure \ref{fig:vertex_1_case_2_torus}. 
Let $f : E(L_3) \to \Real ^2$ be an S-map with $c(f) = 1$ 
constructed from the branched shadow depicted in Figure \ref{fig:vertex_1_case_2} (iv) using the argument in 
Theorem \ref{thm:branched shadow complexity and crossing singulatities}.  
Then $P'$ can be naturally identified with $W_f$, where 
$E(L_3) \overset{q_f}{\longrightarrow} W_f  \overset{\bar{f}}{\longrightarrow} \Real^2$ is 
the Stein factorization of $f$.  
We denote the vertex of $P'$ by $v$. 
Let $x_1$, $x_2, \ldots, x_6$, $y_1$, $y_2, \ldots, y_6$ be the points on $\partial \Nbd (v ; P') \cap \partial P'$ 
shown on the left-hand side in Figure \ref{fig:L_3_singular_fiber}, and set
$\partial \Nbd (v ; P') \cap S(P') = \{ z_1, z_2, z_3, z_4 \}$ as shown in the same figure. 
\begin{figure}[htbp]
\begin{center}
\includegraphics[width=14cm,clip]{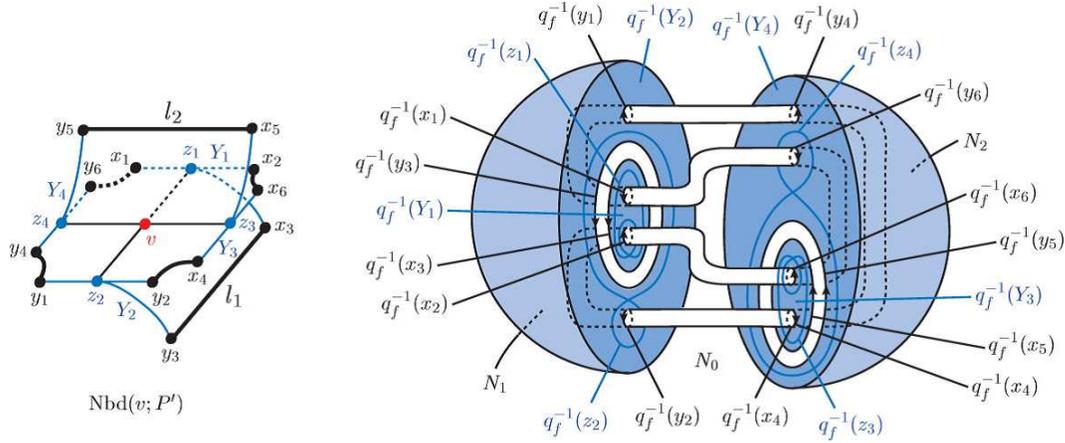}
\caption{The configuration of singular fibers for $L_3$.}
\label{fig:L_3_singular_fiber}
\end{center}
\end{figure}
Let $Y_i$ ($i \in \{ 1 , 2 , 3 , 4 \}$) be the closure of the Y-shaped component of 
$\partial \Nbd (v ; P') \setminus \partial P'$ containing $z_i$. 
Then by the argument in Theorem \ref{thm:vertex 1 connected}, 
$q_f^{-1} (x_3)$ and $q_f^{-1} (y_3)$ are longitudes of the trivial knot $K_1$, 
and $q_f^{-1} (x_5)$ and $q_f^{-1} (y_5)$ are longitudes of $K_2$, 
while all the other $x_i$'s and $y_i$'s are meridians of $K$. 
The preimage $Y_i$ ($i \in \{ 1 , 2 , 3 , 4 \}$) is a pair of pants spanned by the 
simple closed curves $q_f^{-1} (\partial Y_i)$ as shown 
on the right-hand side in Figure \ref{fig:L_3_singular_fiber}. 
We note that once we fix an orientation of $E(L_3)$ and $\Real^2$, we may give 
an orientation of each of $q_f^{-1} (x_i)$ and $q_f^{-1} (y_i)$ ($i \in \{ 1 , 2 , \ldots, 6 \}$)  
in a natural way. 
Then by the branching of the Stein factorization $P'$, 
the preimage $q_f^{-1} (z_i)$, which is an immersed figure-8 shaped curve, lies in $q_f^{-1} (Y_i)$ 
as shown in the same figure. 
The union $\bigcup_{i=1}^4 q_f^{-1} (Y_i)$ cuts $E(L_3)$ into a genus 3 handlebody $N_0$ and 
two genus 2 handlebodies $N_1$, $N_2$, where 
$N_1$ is bounded by 
$q_f^{-1} ( Y_1 )$ and $q_f^{-1} ( Y_2 )$, and 
$N_2$ is bounded by $q_f^{-1} ( Y_3 )$ and $q_f^{-1} ( Y_4 )$. 
We note here that each of $N_1$ and $N_2$ is the product $\mbox{(a pair of pants)} \times [0,1]$ corresponding to 
the preimage of one of the two components of the closure of $P' \setminus \Nbd (v ; P')$ under $q_f$. 

Now, the preimage $q_f^{-1} (v)$, which is our target, lies in $H_0$. 
As reviewed in Section \ref{subsec:Stable maps and their Stein factorizations}, 
$q_f^{-1} (\Nbd (v; P'))$ has a standard product structure 
$\mbox{(a disk with 3 holes)} \times [0,1]$ for which 
the preimages of $z_1$, $z_2$, $z_3$, $z_4$ and $v$ are illustrated on the left-hand side in Figure 
\ref{fig:L_3_singular_fiber_2} (cf. Figure \ref{fig:the stein factorization for II3}). 
\begin{figure}[htbp]
\begin{center}
\includegraphics[width=12cm,clip]{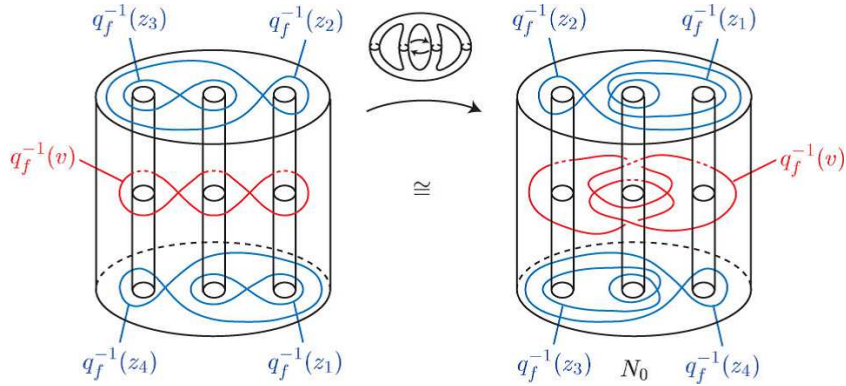}
\caption{Twisting the handlebody.}
\label{fig:L_3_singular_fiber_2}
\end{center}
\end{figure}
The right-hand side in Figure \ref{fig:L_3_singular_fiber} also gives a 
$\mbox{(a disk with 3 holes)} \times [0,1]$ of the handlebody 
$H_0$ as shown on the right-hand side in Figure 
\ref{fig:L_3_singular_fiber_2}. 
However, this product structure is not consistent with the standard one. 
There exists an orientation-preserving diffeomorphism between 
the handlebodies, which is realized by switching the two handles as described in the figure. 
Using this diffeomorphism, we can find the configuration of $q_f^{-1} (v)$ inside $N_0$. 
This completes the proof. 
\end{proof}

\begin{example}
The knot $K_2(1,1)$ is the figure-eight knot. 
By Corollary \ref{cor:configurations of the unique singular fibers} 
the exterior of $K_2(1,1)$ admits an S-map with $c(f)=1$ whose unique singular fiber of type $\mathrm{II}^2$ 
is shown in Figure \ref{fig:figure_eight_smc_1}. 
\begin{figure}[htbp]
\begin{center}
\includegraphics[width=2.5cm,clip]{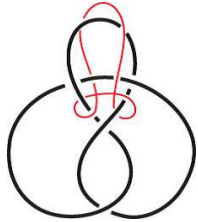}
\caption{The configuration of the singular fiber of type $\mathrm{II}^2$
in the figure-eight knot complement.}
\label{fig:figure_eight_smc_1}
\end{center}
\end{figure}

\end{example}


\section{Stable maps and hyperbolic volume}
\label{sec:Stable maps and hyperbolic volume}


Throughout the section, we consider a particular type of polyhedron, a {\it special polyhedron}. 
An almost-special polyhedron $P$ is said to be {\it special} 
if there is no loop without vertices in $S(P)$ and 
each region of $P$ is a disk. 
We note that in this case, $S(P)$ is connected and $P$ contains no boundary vertices. 
We call a shadow of a 3-manifold that is a special polyhedron a {\it special shadow} of $M$.  
Remark that  every closed orientable 3-manifold admits a branched, special shadow by the moves 
described in Turaev \cite{Tur94} and Costantino \cite{Cos05a}. 
See also Martelli \cite{Mar05} for interesting properties of special shadows. 

Let $M$ be a closed orientable $3$-manifold with special shadow $P$ and
$\pi:M\to P$ be the projection induced by the collapsing $W\searrow P$, where $M=\partial W$. 
Set $M_{S(P)} = \pi^{-1} (\Nbd (S(P) ; P))$. 
Costantino-Thurston \cite{CT08} showed that $M_{S(P)}$ admits a 
complete, finite volume hyperbolic structure realized by gluing $2 c(P)$ copies of a regular ideal octahedron.
Thus, in particular, we have $\vol (M_{S(P)}) = 2 c(P) V_{\mathrm{oct}}$. 
Since each region of a special polyhedron is a disk, $M$ is obtained from $M_{S(P)}$
by attaching solid tori, i.e., by Dehn fillings.
In particular, by the 6-Theorem of Agol \cite{Ago00} and Lackenby \cite{Lac00}
 and the Geometrization Theorem of Perelman \cite{Per02, Per03a, Per03b},
if all slope lengths of the Dehn fillings are more than $6$ then $M$ admits a complete finite volume hyperbolic structure.
Since the hyperbolic structure of  $M_{S(P)}$ is explicitly given by the ideal octahedra,
 the slope lengths of Dehn fillings can be calculated in terms of the combinatorial structure of 
the special polyhedron $P$ and the gleams on its regions.

Let $P$ be a shadowed, special polyhedron.
For each region $R$ of $P$, set ${\mathrm{sl}}(R)=\sqrt{(2g)^2 + k^2}$, where
$g \in \frac{1}{2} \Integer$ is the gleam on $R$ and
$k$ is an integer counting how many times 
the boundary of the closure of $R$ passes through the vertices of $P$. 
We will show later, in Lemma \ref{lem:length and weight}, 
that ${\mathrm{sl}}(R)$ is nothing but the slope length of the Dehn filling for 
the corresponding boundary torus when we obtain $M$ 
from the hyperbolic manifold $M_{S(P)}$.
We set $\mathrm{sl}(P) = \min_R {\mathrm{sl}}(R)$, where 
$R$ varies over all regions of $P$. 

\begin{proposition}
\label{lem:lower bound of volume}
Let $M$ be a closed orientable $3$-manifold. 
Let $P$ be a branched, special shadow of $M$. 
If ${\mathrm{sl}}(P) > 2 \pi$, then we have 
\begin{eqnarray*}
2 \, \mathrm{smc} (M) V_{\mathrm{oct}}  \left(  1 - \left( \frac{2 \pi}{ {\mathrm{sl}}(P) } \right)^2 \right)^{3/2}   
&\leqslant&
2 \, c (P) V_{\mathrm{oct}}  \left(  1 - \left( \frac{2 \pi}{ {\mathrm{sl}}(P) } \right)^2 \right)^{3/2}   \\
&\leqslant&  \vol (M) 
< 
2 \, \mathrm{smc} (M) V_{\mathrm{oct}}  .
\end{eqnarray*}
\end{proposition}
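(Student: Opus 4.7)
The proof breaks into three separate inequalities. The leftmost one follows immediately from the main result of the paper, the middle one is a direct application of Futer--Kalfagianni--Purcell's effective Dehn filling estimate combined with the Costantino--Thurston octahedral decomposition, and the rightmost \emph{strict} inequality requires a short argument via Thurston's hyperbolic Dehn surgery theorem applied to an auxiliary branched shadow realizing $\smc(M)$. Throughout, let $\pi : M \to P$ be the projection induced by a collapsing $W \searrow P$, and set $M_{S(P)} = \pi^{-1}(\Nbd(S(P); P))$.

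For the leftmost inequality, since $P$ is a branched shadow of $M$ we have $\bsc(M) \leqslant c(P)$ by Definition \ref{def:branched shadow complexity}, and Theorem \ref{thm:branched shadow complexity and crossing singulatities} upgrades this to $\smc(M) = \bsc(M) \leqslant c(P)$; multiplying both sides by the common nonnegative factor $2V_{\mathrm{oct}}(1-(2\pi/\mathrm{sl}(P))^2)^{3/2}$ gives the inequality. For the middle inequality, the Costantino--Thurston construction endows $M_{S(P)}$ with a complete finite-volume hyperbolic structure obtained by gluing $2c(P)$ regular ideal octahedra, so $\vol(M_{S(P)}) = 2c(P) V_{\mathrm{oct}}$; because $P$ is special, all regions are disks, and $M$ is recovered from $M_{S(P)}$ by Dehn filling its boundary tori. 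By the forthcoming slope-length identification (Lemma \ref{lem:length and weight}), the slope length of the Dehn filling associated to a region $R$ equals $\mathrm{sl}(R)$, so the minimum slope length is $\mathrm{sl}(P) > 2\pi$, and applying Futer--Kalfagianni--Purcell \cite[Theorem~1.1]{FKP08} produces the claimed lower bound.

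For the rightmost strict inequality, I would select a branched shadow $P^*$ of $M$ with $c(P^*) = \bsc(M) = \smc(M)$, which exists by Theorem \ref{thm:branched shadow complexity and crossing singulatities}. If $P^*$ can be taken to be special, the same Costantino--Thurston construction gives $M_{S(P^*)}$ a complete hyperbolic structure of volume $2 \smc(M) V_{\mathrm{oct}}$, and $M$ is obtained from $M_{S(P^*)}$ by Dehn filling all of its boundary tori, every such filling being nontrivial since $M$ is closed; Thurston's hyperbolic Dehn surgery theorem then forces $\vol(M) < 2\smc(M) V_{\mathrm{oct}}$.

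The main obstacle is precisely the \emph{specialness} requirement in the preceding paragraph: a priori a minimal branched shadow need not be special, and the moves of \cite{Tur94, Cos05a} that render a shadow special may increase its complexity. To complete the proof I would either (i) adapt those moves to preserve $c(P^*) = \smc(M)$, which requires a careful combinatorial argument; or (ii) bypass specialness altogether by working with the Costantino--Thurston Gromov-norm estimate, observing that for closed hyperbolic $M$ the inequality $\|M\| V_{\mathrm{tet}} \leqslant 2\smc(M) V_{\mathrm{oct}}$ is strict because its proof decomposes $M$ through the cusped manifold $M_{S(P^*)}$, and the passage from cusped to closed is volume-strict by Thurston. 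This is the delicate point of the argument.
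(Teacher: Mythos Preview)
Your treatment of the first two inequalities is correct and matches the paper's proof exactly: the leftmost inequality follows from $\smc(M)=\bsc(M)\leqslant c(P)$ via Theorem~\ref{thm:branched shadow complexity and crossing singulatities}, and the middle one is precisely Futer--Kalfagianni--Purcell applied to the octahedral decomposition of $M_{S(P)}$ together with the slope identification of Lemma~\ref{lem:length and weight}.

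For the rightmost strict inequality you are working much harder than necessary. The paper simply invokes Costantino--Thurston \cite[Theorem~3.37]{CT08} together with Theorem~\ref{thm:branched shadow complexity and crossing singulatities}. That result already delivers $\vol(M) < 2\,\mathrm{sc}(M)\,V_{\mathrm{oct}} \leqslant 2\,\smc(M)\,V_{\mathrm{oct}}$ for closed hyperbolic $M$, and crucially its proof does \emph{not} require the shadow realizing $\mathrm{sc}(M)$ to be special: Costantino--Thurston produce, from an arbitrary shadow with $n$ vertices, a cusped hyperbolic link complement in $M$ of volume at most $2nV_{\mathrm{oct}}$, and then close up by Dehn filling; Thurston's theorem gives the strict drop. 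Your approach~(i) is therefore an unnecessary detour whose obstacle (preserving $c(P^*)$ under specialization moves) is genuine and, as you suspect, not easily overcome. Your approach~(ii) is essentially the correct one and is what the cited theorem packages; your hesitation about strictness is unwarranted once you quote \cite[Theorem~3.37]{CT08} directly rather than trying to reprove it. Note also that the hyperbolicity of $M$, which you use implicitly, is itself a consequence of the hypothesis $\mathrm{sl}(P)>2\pi$ via \cite[Theorem~1.1]{FKP08}.
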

\begin{proof}
The first inequality is immediate from the definition. 
The second inequality follows from Theorem \ref{thm:branched shadow complexity and crossing singulatities}, 
Lemma \ref{lem:length and weight} below, and 
Futer-Kalfagianni-Purcell \cite[Theorem~1.1]{FKP08}. 
The last inequality follows from Costantino-Thurston \cite[Theorem~3.37]{CT08} and 
Theorem \ref{thm:branched shadow complexity and crossing singulatities}. 
\end{proof}

From these inequalities we have the following result that concerns 
the coincidence of shadow complexities, branched shadow complexities and stable map complexities.

\begin{theorem}
\label{thm:linear upper and lower bound of sms}
Let $M$ be a closed orientable $3$-manifold, and let $P$ be a branched, special shadow of $M$.
If $\mathrm{sl}(P) > 2\pi\sqrt{2c(P)}$, then we have
 $\mathrm{sc}(M)=\mathrm{bsc}(M)=\mathrm{smc}(M)=c(P)$.
\end{theorem}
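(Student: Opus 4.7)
The strategy is to sandwich all three complexities between $c(P)$ from above and $c(P)$ from below. The upper-bound side of the chain is essentially free: every branched shadow is a shadow so $\mathrm{sc}(M) \leqslant \bsc(M)$, Theorem \ref{thm:branched shadow complexity and crossing singulatities} gives $\bsc(M) = \smc(M)$, and since $P$ itself is a branched shadow of $M$ with $c(P)$ vertices, $\bsc(M) \leqslant c(P)$. So the real content is to prove the matching lower bounds $\mathrm{sc}(M) \geqslant c(P)$ and $\smc(M) \geqslant c(P)$.

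Both of these I would extract from Proposition \ref{lem:lower bound of volume} together with the Costantino--Thurston inequality $\| M \| V_{\mathrm{tet}} \leqslant 2\,\mathrm{sc}(M)\,V_{\mathrm{oct}}$ recalled in the introduction. The hypothesis $\mathrm{sl}(P) > 2\pi\sqrt{2c(P)} \geqslant 2\pi$ puts us in the setting of Proposition \ref{lem:lower bound of volume}, and in particular forces $M$ to be hyperbolic via the $6$-Theorem (since $2\pi > 6$), so $\| M \| V_{\mathrm{tet}} = \vol(M)$. Chaining the lower bound $\vol(M) \geqslant 2 c(P) V_{\mathrm{oct}}(1-(2\pi/\mathrm{sl}(P))^2)^{3/2}$ from Proposition \ref{lem:lower bound of volume} with $\vol(M) < 2\,\smc(M)\,V_{\mathrm{oct}}$ on the one hand and with $\vol(M) \leqslant 2\,\mathrm{sc}(M)\,V_{\mathrm{oct}}$ on the other, I would obtain
\[ c(P)\Bigl(1 - \bigl(2\pi/\mathrm{sl}(P)\bigr)^2\Bigr)^{3/2} \;<\; \smc(M) \quad\text{and}\quad c(P)\Bigl(1 - \bigl(2\pi/\mathrm{sl}(P)\bigr)^2\Bigr)^{3/2} \;\leqslant\; \mathrm{sc}(M). \]

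The remainder is an elementary estimate: under the hypothesis, the common lower bound $c(P)(1-(2\pi/\mathrm{sl}(P))^2)^{3/2}$ strictly exceeds $c(P)-1$. Since $(2\pi/\mathrm{sl}(P))^2 < 1/(2c(P))$ strictly, this reduces to verifying $(1 - 1/(2n))^{3/2} > 1 - 1/n$ for every positive integer $n$. The Bernoulli-type bound $(1-x)^{3/2} \geqslant 1 - \tfrac{3}{2}x$ on $[0,1]$, immediate from the observation that $g(x) = (1-x)^{3/2} - 1 + \tfrac{3}{2}x$ vanishes at $x=0$ and has non-negative derivative on $[0,1]$, gives
\[ \bigl(1 - 1/(2n)\bigr)^{3/2} \;\geqslant\; 1 - 3/(4n) \;>\; 1 - 1/n, \]
as required. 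Since $\mathrm{sc}(M)$ and $\smc(M)$ are non-negative integers strictly exceeding $c(P) - 1$, both must be at least $c(P)$, and combined with the upper-bound side this forces $\mathrm{sc}(M) = \bsc(M) = \smc(M) = c(P)$. The principal obstacle is really just careful bookkeeping of the two volume inequalities; all the serious geometric content has already been imported from Proposition \ref{lem:lower bound of volume} and the Costantino--Thurston bound, so no new ingredient is required.
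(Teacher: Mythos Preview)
Your proposal is correct and follows essentially the same route as the paper's proof: both use Proposition~\ref{lem:lower bound of volume} and the Costantino--Thurston bound $\vol(M)\leqslant 2\,\mathrm{sc}(M)\,V_{\mathrm{oct}}$ to trap each of the four integers within distance strictly less than $1$ of a common real number. The only cosmetic differences are organisational---the paper shows all four integers lie in the half-open interval $[\vol(M)/(2V_{\mathrm{oct}}),\,\vol(M)/(2V_{\mathrm{oct}})+1)$, whereas you show each exceeds $c(P)-1$---and in the elementary estimate used: you invoke the Bernoulli-type bound $(1-x)^{3/2}\geqslant 1-\tfrac{3}{2}x$, while the paper uses the slightly cruder $(1-x)^{3/2}>1-2x$.
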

\begin{proof}
The inequality $\mathrm{sl}(P) > 2\pi\sqrt{2c(P)}$ implies that
\[
1-\frac{1}{c(P)} < 1-2\left(\frac{2\pi}{\mathrm{sl}(P)}\right)^2
 < \left(1-\left(\frac{2\pi}{\mathrm{sl}(P)}\right)^2\right)^{3/2}.
\]
Thus we have the following inequalities:
\[
   0<c(P)-c(P)\left(1-\left(\frac{2\pi}{\mathrm{sl}(P)}\right)^2\right)^{3/2}<1.
\]
 Since $\mathrm{sc}(M)$, $\bsc(M)$ and $\smc(M)$ are all less than or equal to $c(P)$,
the above inequalities hold even if we replace $c(P)$ by  $\mathrm{sc}(M)$, $\bsc(M)$ and $\smc(M)$.
Applying this to the inequalities in Proposition~\ref{lem:lower bound of volume},
 we have
\[
   0\leqslant \smc(M) -\frac{\vol (M)}{2V_{\mathrm{oct}}}<1.
\]
By the way, it is easy to check that the inequalities in Proposition \ref{lem:lower bound of volume} hold
not only for the stable map complexity but also the shadow complexity, the branched shadow complexity
and the number of vertices of the polyhedron $P$. 
Therefore the above inequality holds even if we replace $\smc(M)$ by $\mathrm{sc}(M)$, $\bsc(M)$ and $c(P)$. 
Since $\mathrm{sc}(M)$, $\bsc(M)$, $\smc(M)$ and $c(P)$ are integers, they coincide.
 \end{proof}

Let $P$ be a special shadow of a closed orientable 3-manifold $M$. 
Costantino-Thurston \cite[Proposition~3.34]{CT08} described 
the Euclidean structure on the cusps of $M_{S(P)}$ in terms of 
{\it $\Integer_2$-gleam} and the 
numbers of the vertices through which the boundaries of the regions of $P$ runs. 
The proof of the following lemma is not essentially new. 
In fact, it is described implicitly in \cite{CT08}. 
The maximal horocusp had been observed in Costantino-Frigerio-Martelli-Petronio \cite{CFMP07}
though gleams are excluded from the discussion. 
Therefore we believe that it deserves to be clarified 
in our setting with details.
\begin{lemma}
\label{lem:length and weight}
Let $P$ be a special shadow of a closed orientable $3$-manifold $M$.
Let $R_1$, $R_2 , \ldots, R_n$ be the regions of $P$.
Set $l_i = R_i \cap \partial \Nbd (S(P); P)$, and let
$T_i$ be the component of the boundary of $M_{S(P)}$ corresponding to $l_i$
for $i \in \{ 1, 2, \ldots, n \}$. 
Then there exists a maximal horoball neighborhood 
$C = C_1 \cup C_2 \cup \cdots \cup C_n$ of the cusps of the interior of $M_{S(P)}$, 
where $C_i$ corresponds to $T_i$, such that 
$M$ is obtained from $M_{S(P)}$ by Dehn fillings along the slopes 
$s_1 \subset T_1$, $s_2 \subset T_2 , \ldots, s_n \subset T_n$ whose lengths 
with respect to $C$ are 
$\mathrm{sl}(R_1)$, $\mathrm{sl}(R_2) , \ldots, \mathrm{sl}(R_n)$, respectively. 
\end{lemma}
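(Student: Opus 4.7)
The plan is to invoke the explicit octahedral decomposition of $M_{S(P)}$ established in Costantino--Thurston \cite{CT08} and to read the Euclidean structure on each cusp directly off the combinatorics of $P$. First, I would recall that $M_{S(P)}$ is obtained by gluing $2c(P)$ regular ideal octahedra, one pair for each true vertex of $P$, in the pattern dictated by the singular set $S(P)$, and that each cusp $T_i$ of $M_{S(P)}$ corresponds to the region $R_i$ of $P$. After fixing the standard horoball at each ideal vertex of the regular ideal octahedron, the horocross-section is a Euclidean unit square; doing this consistently at every ideal vertex yields horoball neighborhoods $C_i$ of the cusps whose boundaries inherit a canonical flat structure.

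Next, I would analyze $T_i$ combinatorially. Walking once around the loop $l_i=\partial R_i\subset S(P)$, one passes through precisely $k_i$ true vertices of $P$, each of which contributes exactly one unit square to the cusp cross-section $\partial C_i$. These $k_i$ squares glue edge-to-edge into a Euclidean annulus of length $k_i$ and width $1$, whose core curve is isotopic to the longitude coming from $l_i$. Closing the annulus up to the torus $T_i$ requires identifying its two short boundary edges with a Euclidean shear along the longitudinal direction, and the amount of this shear is determined by the framing obstruction encoded by the gleam. By tracing through Turaev's reconstruction, the shear is exactly $2g_i$ units in the longitudinal direction. After shrinking the $C_i$ uniformly if necessary so that the $C_i$ are pairwise disjoint and embedded, we obtain the desired maximal horoball neighborhood $C=\bigsqcup_i C_i$.

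Finally, the slope $s_i\subset T_i$ of the Dehn filling used to recover $M$ from $M_{S(P)}$ is the boundary of a meridian disk of the solid torus $\pi^{-1}(R_i)$. In the Euclidean basis of $T_i$ given by the longitude of $l_i$ (of length $k_i$) and the fiber direction (of length $1$), the meridian is represented, by the definition of the gleam as the relative Euler number of the disk bundle over $R_i$, by the vector $(k_i,2g_i)$. Therefore
\[
\mathrm{length}(s_i)=\sqrt{k_i^{2}+(2g_i)^{2}}=\mathrm{sl}(R_i),
\]
which is the required slope length.

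The main obstacle will be pinning down the factor relating the gleam to the Euclidean shear: one must verify that a half-integer gleam $g_i$ yields a shear of exactly $2g_i$ (and not $g_i$ or $4g_i$). This amounts to matching two framings of $l_i$, the one used by Turaev to define gleams via the framings $A_i$ and $B_i$ of Figure~\ref{fig:framing_and_gleam}, and the one inherited from the hyperbolic gluing of the ideal octahedra along $\partial C_i$, and keeping careful track of orientations and half-twists across the singular set.
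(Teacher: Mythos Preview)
Your strategy is essentially that of the paper: use the Costantino--Thurston octahedral decomposition of $M_{S(P)}$, take the maximal horocusp in each regular ideal octahedron (whose cross-sections are Euclidean unit squares), and read off the filling slope from the gleam and the number $k_i$ of vertex-passages of $\partial R_i$. However, your intermediate bookkeeping is off in a way that would not survive a careful write-up, even though the final formula happens to come out right.

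Each passage of $l_i$ through a vertex $v$ contributes \emph{two} unit squares to the cusp cross-section, not one: there are two ideal octahedra at $v$ (the two halves $X\times\{\pm 1\}$ of the $4$-thickening), and the passage meets one horocusp square $b^{+}$ and one $b^{-}$. Thus $\partial C_i$ is a flat $2\times k_i$ rectangular torus with \emph{no shear}: the meridian $\mu_i$ of the solid torus $\pi^{-1}(l_i)$ has length $2$, the longitude $\lambda_i$ (a leaf of the natural foliation) has length $k_i$, and they are orthogonal. The Dehn-filling slope is then $s_i = g_i[\mu_i]+[\lambda_i]$, giving length $\sqrt{(2g_i)^2+k_i^2}$ directly. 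Your picture of a $k_i\times 1$ annulus closed up with shear $2g_i$, and a ``meridian'' equal to $(k_i,2g_i)$, is internally inconsistent (that vector does not lie in the lattice you describe) and gets the right number only by a fortuitous cancellation. You should also treat the half-integer gleam case separately: there $\rho^{-1}(l_i)$ is a M\"obius band rather than an annulus, and the paper handles this by replacing one square $b_{\sigma(1),\tau(1)}$ by a twisted square $a^{\pm}_{\sigma(1),\tau(1)}$, yielding slope $(g_i-\tfrac12)[\mu_i]+[\lambda_i]$ with $\lambda_i$ now carrying the extra half-twist.
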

\begin{proof}
For each vertex $v_i$ ($i \in \{ 1 , 2, \ldots, c(P) \}$), we set $P_i = \Nbd (v_i ; P)$.  
We regard  $\Nbd (S(P); P)$ as the union $\bigcup_{i=1}^{c(P)} P_i $. 
For the 3-dimensional thickening $X_i$ of $P_i$, 
we fix a collapsing $\rho_i : X_i \searrow P_i$ so that: 
\begin{itemize}
\item
for a point $y$ in $P_i \setminus S(P_i)$, $\rho_i^{-1} (y) = \{ y\} \times [-1, 1]$; 
\item
for a point $y$ in $S(P_i) \setminus \{v_i\}$, $\rho_i^{-1} (y)$ is a Y-shaped graph; and 
\item
$\rho_i^{-1} (v_i)$ is an X-shaped graph. 
\end{itemize}
Then $\partial X_i \setminus \Int \, (\rho_i^{-1} (\partial P_i))$ consists of four disks 
$d_{i, 1}$, $d_{i, 2}$, $d_{i, 3}$ and $d_{i, 4}$, 
and the closure of $\rho_i^{-1} (\partial P_i) \setminus \Nbd (\rho_i^{-1} (S(P)))$ consists of 
six squares $b_{i,1}$, $b_{i,2} , \ldots, b_{i,6}$. 
We foliate each of these squares by intervals as shown in  
Figure \ref{fig:3-dim_thickening}. 
\begin{figure}[htbp]
\begin{center}
\includegraphics[width=5cm,clip]{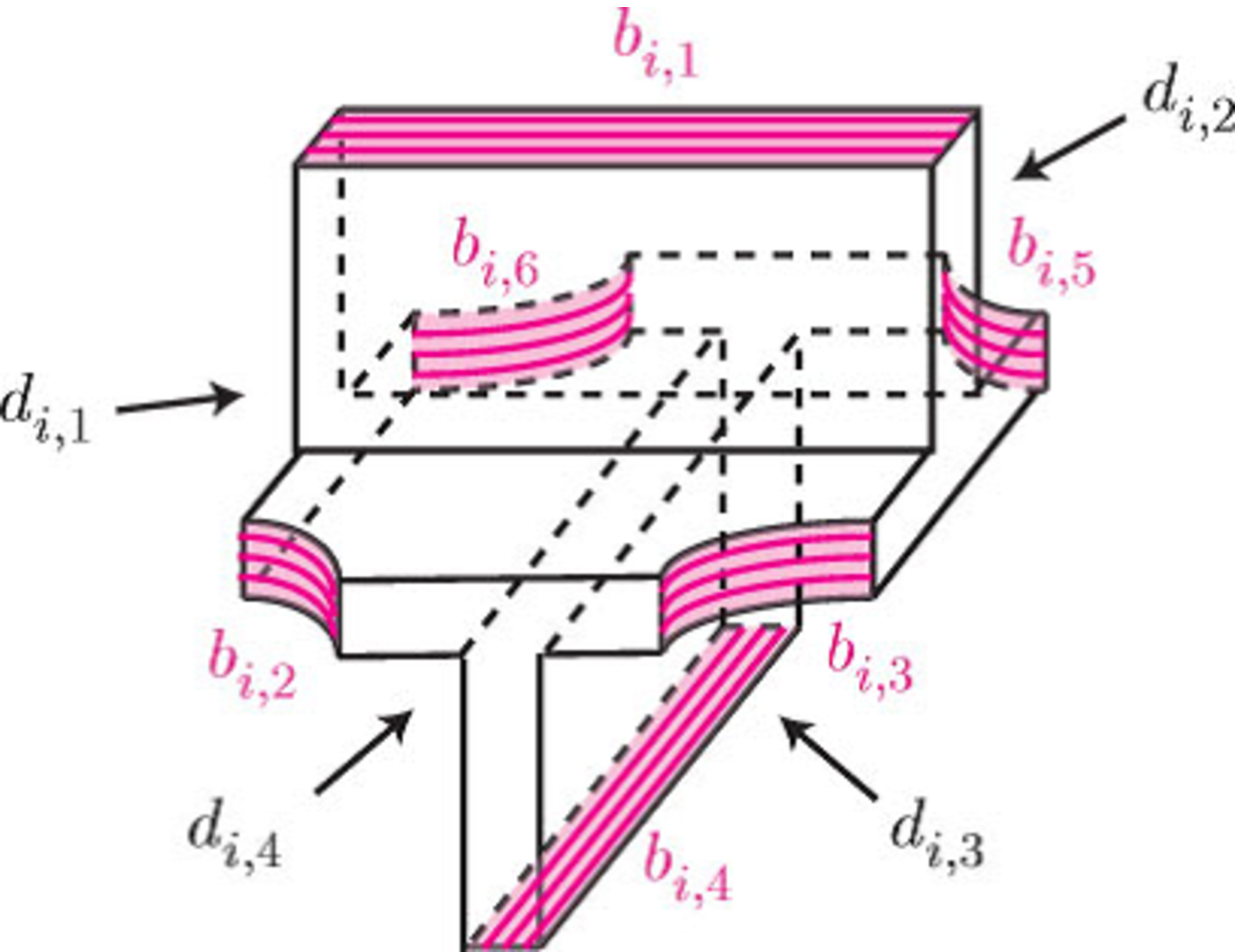}
\caption{$P_i=\Nbd(v_i;P)$.}
\label{fig:3-dim_thickening}
\end{center}
\end{figure}
Set $W_i = X_i \times [-1, 1]$. 
We note that the 4-thickening $W_0$ of $\Nbd (S(P) ; P)$, which is 
the subbundle of the determinant line bundle over $X_i$ whose fiber is 
$[-1, 1]$ (after giving an Euclidean metric over this bundle), is obtained by 
gluing the pieces $W_1$, $W_2 , \ldots, W_{c(P)}$. 
Then the boundary $\partial W$ of  the 4-thickening $W$ of $P$ 
decomposes as: 
\[
W_i \cap \partial W = X_i \times \{  -1 \} 
\bigcup_{ d_{i,j} \times \{ -1 \} } 
\left( \left( \bigsqcup_{i=1}^4 d_{i,j} \right) \times [ -1, 1 ] \right) 
\bigcup_{ d_{i,j} \times \{ 1 \} } 
X_i \times \{ 1 \} . 
\] 
See Figure \ref{fig:octahedra}. 
\begin{figure}[htbp]
\begin{center}
\includegraphics[width=14cm,clip]{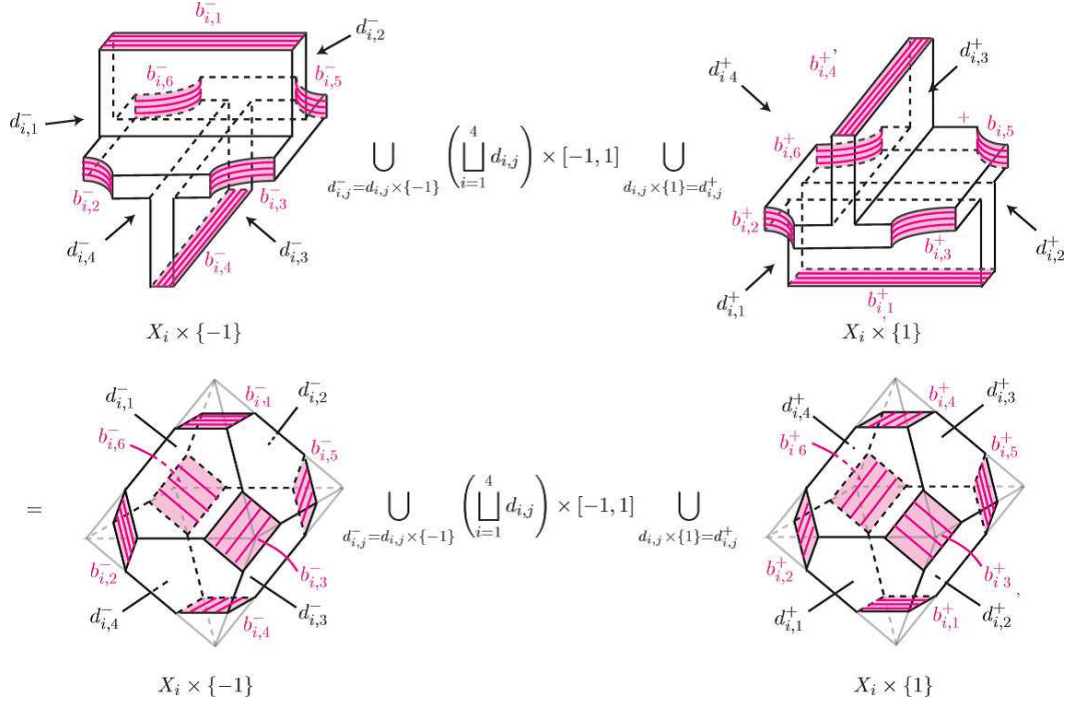}
\caption{$W_i \cap \partial W$. Topologically, the part $( \bigsqcup_{i=1}^4 d_{i,j} ) \times [ -1, 1 ]$ can be ignored 
and we may regard $W_i \cap \partial W$ as two truncated octahedra $X_i \times \{  \pm 1 \}$ glued 
as $d_{i,j}^- = d_{i,j}^+$ ($j \in \{ 1, 2, 3, 4\}$).}
\label{fig:octahedra}
\end{center}
\end{figure} 
In the figure, $d_{i, j} \times \{ \pm 1 \}$ ($j \in \{ 1,2,3,4 \}$) is denoted by $d_{i,j}^\pm$ and 
$b_{i, j} \times \{ \pm 1 \}$ ($j \in \{ 1,2, \ldots ,6\}$) is denoted by $b_{i,j}^\pm$. 
Moreover each $b_{i,j}^\pm$ inherits a foliation from $b_{i,j}$. 
 
Let $X = \bigcup_{i=1}^{c(P)}  X_i$ be the 3-dimensional thickening of $\Nbd ( S(P) ; P )$. 
Let $\rho \searrow \Nbd (S(P) ; P)$ be the collapsing given by $\rho_i$ ($i \in \{ 1 , 2, \ldots, c(P) \}$), 
and let $\pi : W_0 \searrow \Nbd (S(P) ; P)$ be the collapsing given by $\rho$ and 
the projection $[-1, 1] \to \{0\}$. 
Set $D_i = R_i \setminus \Int \, \Nbd (S(P), P)$  ($i \in \{ 1 , 2, \ldots, c(P) \}$). 
We note that $P$ is obtained from $\Nbd ( S(P) ; P )$ by attaching each disk $D_i$ along $l_i$. 
Suppose the boundary of the closure of $R_i$ runs $k_i$ times through the vertices of $P$. 
Then the preimage $\rho^{-1} (l_i)$, 
which is either an annulus or a M\"{o}bius band, 
is the union $\bigcup_{i=1}^{k_i} b_{\sigma (i) , \tau (i) }$
with $\sigma (i) \in \{ 1$, $2, \ldots , c(P) \}$ and
$\tau (i) \in \{ 1$, $2, \ldots , 6 \}$. 
If $\rho^{-1} (l_i)$ is an annulus, it lies in the solid torus 
$\pi^{-1} (l_i)$ as shown in Figure \ref{fig:annulus_framing}. 
\begin{figure}[htbp]
\begin{center}
\includegraphics[width=13cm,clip]{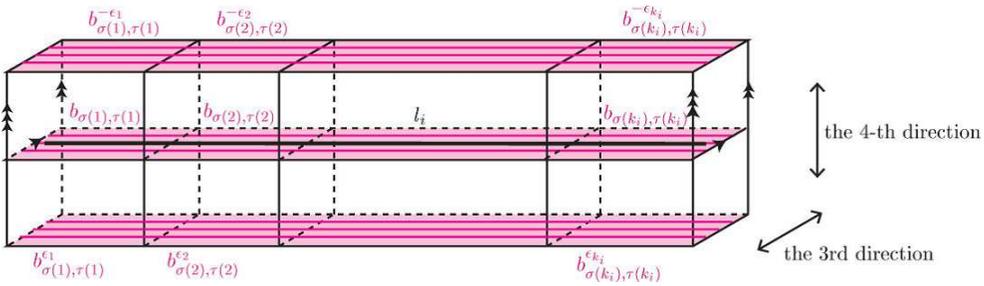}
\caption{The solid torus $\pi^{-1} (l_i)$ in the case where $\rho^{-1} (l_i)$ is an annulus. In this figure, $\epsilon_i = \pm$.}
\label{fig:annulus_framing}
\end{center}
\end{figure} 
Then the annulus $\bigcup_{i=1}^{k_i} b_{\sigma (i) , \tau (i) }$ is parallel to 
$\bigcup_{i=1}^{k_i} b_{\sigma (i) , \tau (i) }^{\epsilon_i}$ in the solid torus $\pi^{-1} (l_i)$. 
Hence by the definition of gleams, 
attaching the disk $D_i $ to $P'$ along $l_i$ and putting the gleam $0$ on the resulting region $R_i$ correspond to 
performing Dehn filling on $X$ 
along a leaf of the foliation on $\bigcup_{i=1}^{k_i} b_{\sigma (i) , \tau (i) }^{\epsilon_i}$, 
which lies in the torus $T_i = \partial \pi^{-1} (l_i) \subset \partial X$. 
Moreover,  putting the gleam $g_i \in \Integer$ on $R_i$ in this process corresponds to 
performing Dehn filling on $X$ 
along the slope $s_i$ obtained from the leaf by $g_i$-th power of the Dehn twist about the meridian of the solid torus $\pi^{-1} (l_i)$. 
Note that this slope $s_i$ can be expressed by $g_i [\mu_i] + [\lambda_i] \in H_1 (T_i ; \Integer)$, 
where $\mu_i$ is a meridian of the solid torus $\pi^{-1} (l_i)$ 
and $\lambda_i$ is a leaf of the foliation on $\bigcup_{i=1}^{k_i} b_{\sigma (i) , \tau (i) }^{\epsilon_i}$.
If $\rho^{-1} (l_i)$ is a M\"{o}bius band, it lies in the solid torus 
$\pi^{-1} (l_i)$ as shown in Figure \ref{fig:mobius_band_framing}. 
\begin{figure}[htbp]
\begin{center}
\includegraphics[width=13cm,clip]{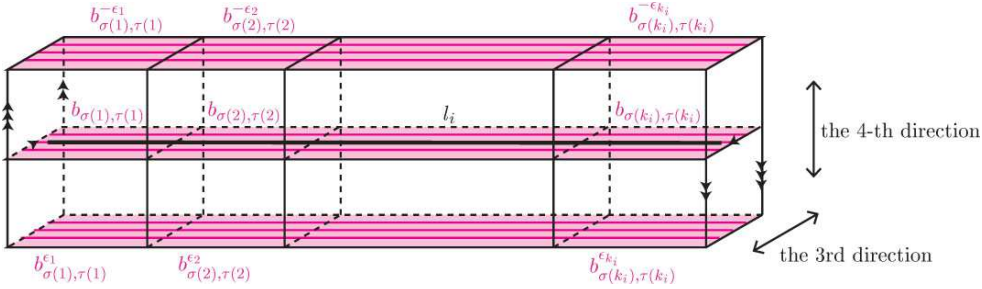}
\caption{The solid torus $\pi^{-1} (l_i)$ in the case where $\rho^{-1} (l_i)$ is a M\"obius band.}
\label{fig:mobius_band_framing}
\end{center}
\end{figure} 
In this case, the M\"{o}bius band  $\bigcup_{i=1}^{k_i} b_{\sigma (i) , \tau (i) }$ is no longer 
parallel to the boundary of the solid torus $\pi^{-1} (l_i)$. 
Let $a_{\sigma (1) , \tau (1)}^{\pm}$ be a foliated square shown in Figure \ref{fig:twisted_band}. 
\begin{figure}[htbp]
\begin{center}
\includegraphics[width=10cm,clip]{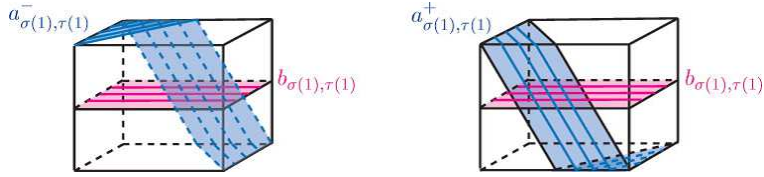}
\caption{The foliation when the gleam is a half-integer.}
\label{fig:twisted_band}
\end{center}
\end{figure} 
Then the union $a_{\sigma (1) , \tau (1)}^\pm \cup (\bigcup_{i=2}^{k_i} b_{\sigma (i) , \tau (i) } )$ 
is an annulus that is isotopic to $\bigcup_{i=1}^{k_i} b_{\sigma (i) , \tau (i) }^{\epsilon_i}$ after 
a $\pm 1/2$-twist. 
Hence again by the definition of gleams, 
attaching the disk $D_i $ to $P'$ along $l_i$ and putting the gleam $\pm 1/2$ on the resulting region $R_i$ correspond to 
performing Dehn filling on $X$ along a leaf of the foliation on 
$a_{\sigma (1) , \tau (1)}^\pm \cup (\bigcup_{i=2}^{k_i} b_{\sigma (i) , \tau (i) } )$.  
Moreover, putting the gleam $g_i \in ( \frac{1}{2}\Integer ) \setminus \Integer$ on $R_i$ 
in this process corresponds to 
performing Dehn filling on $X$ 
along the slope $s_i$ obtained from a leaf of $a_{\sigma (1) , \tau (1)}^{+} \cup (\bigcup_{i=2}^{k_i} b_{\sigma (i) , \tau (i) } )$ 
by $(g_i - 1/2)$-th power of Dehn twist about the meridian of the solid torus $\pi^{-1} (l_i)$. 
Again, note that this slope $s_i$ can be written as $(g_i-1/2) [\mu _i] + [\lambda_i]  \in H_1 (T_i ; \Integer)$, 
where $\mu_i$ 
is a meridian of the solid torus $\pi^{-1} (l_i)$ 
and $\lambda_i$ is a leaf of the foliation on $a_{\sigma (1) , \tau (1)} ^+ \cup (\bigcup_{i=2}^{k_i} b_{\sigma (i) , \tau (i) } )$.

As is explained in \cite{CT08}, we can equip the interior of $X$ with a complete, finite volume hyperbolic structure 
by regarding $X$ as $2 c(P)$ regular ideal octahedra glued by isometries. 
The decomposition of the interior of $X$ into these ideal octahedra exactly corresponds to 
the decomposition described in Figure \ref{fig:octahedra}. 
We recall that the regular ideal octahedron has a maximal horocusp section consisting of six Euclidean unit squares. 
This maximal horocusp gives rise to a maximal horoball neighborhood $C$ of the interior of $X$. 
By the above argument, the length of the slope $s_i$ with respect to $C$ coincides with $\mathrm{sl}  (R_i)$. 
\end{proof}


\section*{Acknowledgments} 
The authors wish to express their gratitude to 
Francesco Costantino, Bruno Martelli and Osamu Saeki 
for very helpful suggestions and comments. 
This work was carried out while the second-named author was visiting
Universit\`a di Pisa as a
JSPS Postdoctoral Fellow for Reserch Abroad.	
He is grateful to the university and its staffs for
the warm hospitality.


\begin{thebibliography}{99999}


\bibitem{Ago99}
Agol, I., 
Lower bounds on volumes of hyperbolic Haken 3-manifolds, arXiv:math/9906182. 

\bibitem{Ago00}
Agol, I., 
Bounds on exceptional Dehn filling, Geom. Topol.  {\bf 4}  (2000), 431--449. 

\bibitem{AST07}
Agol, I., Storm, P. A., Thurston, W. P., 
Lower bounds on volumes of hyperbolic Haken 3-manifolds. With an appendix by Nathan Dunfield, 
J. Amer. Math. Soc.  {\bf 20}  (2007), 1053--1077.

\bibitem{BP97}
Benedetti, R., Petronio, C., 
{\it Branched standard spines of $3$-manifolds}, 
Lecture Notes in Mathematics {\bf 1653}, Springer-Verlag, Berlin, 1997.

\bibitem{BR74}
Burlet, O., de Rham, G., 
Sur certaines applications g\'{e}n\'{e}riques d'une vari\'{e}t\'{e} close \`{a} 3  dimensions dans le plan, 
Enseignement Math. (2)  {\bf 20} (1974), 275--292.

\bibitem{CDW99}
Callahan, P. J., Dean, J. C., Weeks, J. R., The simplest hyperbolic knots, 
J. Knot Theory Ramifications {\bf 8} (1999) 279--297.

\bibitem{Cos05a}
Costantino, F., {\it Shadows and branched shadows of $3$ and $4$-manifolds}, Edizioni della Normale, 
Scuola Normale Superiore, Pisa, Italy, 2005.


\bibitem{Cos05b}
Costantino, F., 
A short introduction to shadows of 4-manifolds, 
Fund. Math. {\bf 188} (2005), 271--291. 

\bibitem{Cos06}
Costantino, F., 
Stein domains and branched shadows of 4-manifolds, 
Geom. Dedicata {\bf 121} (2006), 89--111.  

\bibitem{Cos08}
Costantino, F., 
Branched shadows and complex structures of 4-manifolds, 
J. Knot Theory Ramifications {\bf 17} (2008), 1429--1454.

\bibitem{CFMP07}
Costantino, F., Frigerio, R., Martelli, B., Petronio, C., 
Triangulations of $3$-manifolds, hyperbolic relative handlebodies, and Dehn filling, 
Comment. Math. Helv.  {\bf 82}  (2007), 903--933.

\bibitem{CT08}
Costantino, F., Thurston, D., 
3-manifolds efficiently bound 4-manifolds, J. Topol. {\bf 1} (2008), 703--745.

\bibitem{EI05}
Endoh, M., Ishii, I., A new complexity for 3-manifolds, Japan. J. Math. (N.S.) {\bf 31}  (2005),  131--156.

\bibitem{FKP08}
Futer, D., Kalfagianni, E., Purcell, J. S., 
Dehn filling, volume, and the Jones polynomial, J. Differential Geom.  {\bf 78}  (2008),  
429--464. 


\bibitem{Gro09}
Gromov, M., 
Singularities, expanders and topology of maps. I. Homology versus volume in the spaces of cycles, 
Geom. Funct. Anal. {\bf 19} (2009), 743--841.

\bibitem{Ike71}
Ikeda, H., Acyclic fake surfaces, Topology {\bf 10} (1971), 9--36.

\bibitem{Ish86}
Ishii, I., Flows and spines, Tokyo J. Math.  {\bf 9}  (1986),  505--525.

\bibitem{KS12}
Kalm\'{a}r, B., Stipsicz, A. I., 
Maps on 3-manifolds given by surgery, 
Pacific J. Math. {\bf 257} (2012), 9--35. 	

\bibitem{Kap01}
Kapovich, M., {\it Hyperbolic manifolds and discrete groups}, 
Progress in Mathematics {\bf 183}, Birkh\"auser Boston Inc., Boston, MA, 2001. 

\bibitem{Kod07} Koda, Y., 
Branched spines and Heegaard genus of $3$-manifolds,
Manuscripta Math. \textbf{123} (2007), 285--299.

\bibitem{KLP84}
Kushner, L., Levine, H., Porto, P., 
Mapping three-manifolds into the plane. I, 
Bol. Soc. Mat. Mexicana (2) {\bf 29} (1984), 11--33. 

\bibitem{Lac00}
Lackenby, M., Word hyperbolic Dehn surgery, Invent. Math.  {\bf 140}  (2000), 243--282.

\bibitem{Lev65}
Levine, H., Elimination of cusps, Topology {\bf 3} (1965), 263--296.

\bibitem{Lev85}
Levine, H., 
{\it Classifying immersions into $\Real^4$ over stable maps of $3$-manifolds into $\Real^2$}, 
Lecture Notes in Mathematics {\bf 1157}, Springer-Verlag, Berlin, 1985. 


\bibitem{Mar05} 
Martelli, B., Links, two-handles, and four-manifolds, Int. Math. Res. Not. IMRN {\bf 2005}, 3595--3623. 

\bibitem{Mar11} 
Martelli, B., Four-manifolds with shadow-complexity zero, Int. Math. Res. Not. IMRN {\bf 2011}, 1268--1351. 

\bibitem{Mat03} 
Matveev, S. V., 
\emph{Algorithmic Topology and Classification
of $3$-Manifolds},
Algorithms Comput.\ Math. \textbf{9},
Springer, Berlin, 2003.

\bibitem{Mor84}
Morgan, J. W., 
On Thurston's uniformization theorem for three-dimensional manifolds, 
{\it The Smith conjecture} ({\it New York, $1979$}), pp. 37--125, Pure Appl. Math. {\bf 112}, 
Academic Press, Orlando, FL, 1984.

\bibitem{Ota96}
Otal, J-P., 	
{\it Le th\'{e}or\`{e}me d'hyperbolisation pour les 
vari\'{e}t\'{e}s fibr\'{e}es de dimension $3$}, 
Ast\'{e}risque {\bf 235}, 1996. 

\bibitem{Ota98}
Otal, J-P., 
Thurston's hyperbolization of Haken manifolds, 
{\it Surveys in differential geometry, Vol. III} ({\it Cambridge, MA, $1996$}), pp. 77--194, Int. Press, Boston, MA, 1998.

\bibitem{Per02}
Perelman, G., 
The entropy formula for the Ricci flow and its geometric applications,  arXiv:math/0211159. 


\bibitem{Per03a}
Perelman, G., 
Ricci flow with surgery on three-manifolds, arXiv:math/0303109.

\bibitem{Per03b}
Perelman, G., 
Finite extinction time for the solutions to the Ricci flow on certain three-manifolds, arXiv:math/0307245.

\bibitem{Pet12}
Petronio, C., Generic flows on $3$-manifolds, arXiv:1211.6445.

\bibitem{Rol76}
Rolfsen, D., {\it Knots and links}, 
Mathematics Lecture Series {\bf 7}, Publish or Perish, Inc., Berkeley, Calif., 1976. 

\bibitem{Sae96}
Saeki, O., Simple stable maps of 3-manifolds into surfaces, 
Topology {\bf 35} (1996), 671--698. 

\bibitem{Sae04}
Saeki, O., {\it Topology of singular fibers of differentiable maps}, 
Lecture Notes in Mathematics {\bf 1854}, Springer-Verlag, Berlin, 2004. 

\bibitem{Thu02}
Thurston, D., 
The algebra of knotted trivalent graphs and Turaev's shadow world, 
{\it  Invariants of knots and $3$-manifolds} ({\it Kyoto}, $2001$),  
pp. 337--362, Geom. Topol. Monogr. {\bf 4}, Geom. Topol. Publ., Coventry, 2002. 

\bibitem{Thu80}
Thurston, W. P., 
{\it The geometry and topology of three-manifolds}, lecture notes, Princeton University,
Princeton, 1980, available at {\sf http://msri.org/publications/books/gt3m/}.

\bibitem{Thu82} 
Thurston, W. P., 
Three-dimensional manifolds, Kleinian groups and hyperbolic geometry, Bull. Amer. Math. Soc. (N.S.) {\bf 6} (1982), 357--381.

\bibitem{Tur92}
Turaev, V. G., 
Shadow links and face models of statistical mechanics, J. Differential Geom. {\bf 36} (1992), 35--74.

\bibitem{Tur94}
Turaev, V. G., 
{\it Quantum invariants of knots and $3$-manifolds}, de Gruyter Studies in Mathematics {\bf 18}, 
Walter de Gruyter \& Co., Berlin, 1994.

\bibitem{Yos12}
Yoshida, K., 
The minimal volume orientable hyperbolic 3-manifold with 4 cusps, arXiv:1209.1374. 
\end{thebibliography}
\end{document}